\documentclass[12pt,draft]{article}
\usepackage{amsmath,amsfonts,amsthm}
\usepackage{amssymb}
\usepackage{euscript}

\textwidth=16cm
\textheight=22cm
\hoffset=-15mm
\voffset=-15mm

\usepackage{color, soul}
\usepackage{tikz}
\usepackage{pgfplots}
\pgfplotsset{compat=1.5}
\usepackage{tikzsymbols}

\makeatletter
        \def\@thefnmark{\null}
        \def\footnotetexta{\@footnotetext}



\newtheorem{proposition}{Proposition}
\newtheorem{theorem}{Theorem}
\newtheorem{corollary}{Corollary}
\newtheorem{lemma}{Lemma}
\theoremstyle{definition}
\newtheorem{definition}{Definition}
\theoremstyle{definition}
\newtheorem{example}{Example}

\newcommand{\rhpolys}[2]{\EuScript{P}_r(^{#1} {#2})}

\newcommand{\npoly}[2]{\EuScript{P}(^{#1}{#2})}

\newcommand{\opoly}[2]{\EuScript{P\!}_o(^{#1}{#2})}
\newcommand{\opolyK}[1]{\EuScript{P}_o(^{#1}{C(K)})}

\newcommand{\normc}[1]{\|{#1}\|_d}
\newcommand{\normcdot}{\|\cdot\|_d}

\DeclareMathOperator{\diam}{diam}

\newcommand{\R}{\ensuremath{\mathbb{R}}}

\newcommand{\N}{\ensuremath{\mathbb{N}}}

\newcommand{\CK}{C(K)}
\newcommand{\MK}{\EuScript{M}(K)}

\title{	Geometry of Spaces of Orthogonally Additive Polynomials on
	$C(K)$}

\author{Christopher Boyd, Raymond A. Ryan and Nina Snigireva}

\date{}


\begin{document}

\maketitle

\begin{abstract}\footnotetexta{{\bf Keywords:} Orthogonally
		additive; Homogeneous polynomial; Banach Lattice; Regular Polynomial; Extreme point; Exposed point;
		Isometry.}
\footnotetexta{{\bf MSC(2010):} 46G25; 46G20; 46E10; 46B42; 46B04; 46E27.}
\noindent
We study the space of orthogonally additive $n$-homogeneous 
polynomials on $C(K)$.  There are two natural norms on this 
space.  First, there is the usual supremum norm of 
uniform convergence on the closed unit ball.  
As every orthogonally additive $n$-homogeneous polynomial
is regular with respect to the Banach lattice structure,
there is also the regular norm.  These norms are equivalent,
but have significantly different geometric properties.
We characterise the extreme points of the unit ball
for both norms, with different  results
for even and odd degrees. As an application,
we prove a Banach-Stone theorem. We conclude
with a classification  of the exposed points.
\end{abstract}

 
\section{Introduction}

A real function $f$ on a Banach lattice is 
said to be \emph{orthogonally additive}
if $f(x+y) = f(x)+f(y)$ whenever $x$ and  $y$ are disjoint.
Non-linear orthogonally additive functions on function spaces
often have useful integral representations --- see, for example
the papers of Chacon, Friedman and Katz \cite{Chacon,Friedman1,Friedman2}, Mizel \cite{Mizel} and 
Rao \cite{Rao}.  In 1990, Sundaresan \cite{Sund} initiated
the study of orthogonally additive $n$-homogeneous 
polynomials with particular reference to the spaces 
$L_p[0,1]$ and $\ell_p$ for $1\le p <\infty$.  Building on the work of Mizel,
he showed that, for every orthogonally additive $n$-homogeneous
polynomial $P$ on $L_p[0,1]$ with $n\le p$,
there exists
a unique function $\xi\in L_{\tilde{p}}$,
where $\tilde{p}=p/(p-n)$, such that
\begin{equation}\label{e:Sund}
P(x) = \int_0^1 \xi x^n\,d\mu
\end{equation}
for every $x\in L_p[0,1]$. When $n>p$, there are
no non-zero orthogonally additive $n$-homogeneous
polynomials on $L_p[0,1]$.  He went on to show
that the Banach space of orthogonally additive
$n$-homogeneous polynomials on $L_p[0,1]$
is isometrically isomorphic to $L_{\tilde{p}}$
where the latter space is equipped not with the 
usual norm, but with the equivalent norm
$\|x\|= 
\max\{\|x^+\|_{\tilde{p}},\|x^-\|_{\tilde{p}} \}$.
	
The next significant development was the discovery
of an integral representation for orthogonally additive
$n$-homogeneous polynomials on $C(K)$ spaces by 
P\'erez and Villanueva \cite{PV} and by Benyamini,
Lassalle and Llavona \cite{BLL}, who proved 
a representation of the form
\begin{equation}\label{e:IntRep}
P(x) = \int_K x^n\,d\mu
\end{equation}
where $\mu$ is a regular Borel signed measure on $K$.
The integral representations (\ref{e:Sund})
and (\ref{e:IntRep}) have been extended and 
generalized in various directions in recent years.
  See, for example, \cite{Palazuelos,Kusraeva11,Alaminos,Villena}.
  
  Orthogonally additive $n$-homogeneous polynomials
  are also of interest in the study of multilinear
  operators on Banach lattices and, more generally,
  on vector lattices.  If $E,F$ are vector lattices,
  an $n$-linear mapping $A\colon E^n \to F$ is 
  \emph{orthosymmetric} if $A(x_1,\dots,x_n) =0$
  whenever $x_i$ and $x_j$ are disjoint for some
  pair  of distinct indices $i,j$. Orthosymmetric
  multilinear mappings are automatically 
  symmetric \cite{Boul03}. In \cite{BuBuskes12}, Bu and Buskes prove   that an $n$-linear function
  is orthosymmetric if and only if the associate
  $n$-homogeneous polynomial is orthogonally additive.
  
  Let $E$ be a Banach lattice.  For every positive
  element $a$ of $E$, we may form the principal
  ideal
  $$
  E_a = \{x\in E: |x|\le na \text{ for some $n\in\N$} \}
  $$
  with lattice structure inherited from $E$ 
  and the norm defined by
  $\|x\|_a = \inf\{C>0:|x|\le Ca\}$.
  With this norm, $E_a$ is a Banach lattice.  By virtue of the Kakutani representation theorem \cite{KakAM},
  the Banach lattice $E_a$ is canonically 
  Banach lattice isometrically 
  isomorphic to  $C(K)$ for some compact Hausdorff topological
  space $K$, with $a$ being identified with the unit
  function on $K$.  The Banach lattice structure
  of $E$ is uniquely determined by its principal
  ideals.  It follows that an analysis of the
  orthogonally additive $n$-homogeneous
  polynomials on $C(K)$ is central to an
  understanding of the behaviour of
  orthogonally additive $n$-homogeneous 
  polynomials on general Banach lattices.
  
  In this paper, we focus on the geometric
  properties of the spaces $\opoly{n}{\CK}$
  of orthogonally additive $n$-homogeneous
  polynomials on $\CK$.  There are two 
  phenomena that are of particular interest.
  The first is that there are two natural
  ways to norm the space $\opoly{n}{E}$.
  The first is the norm of uniform convergence
  on the unit ball of $E$, given by
  $\|P\|_\infty= \sup\{|P(x)|: x\in E, \|x\|\le 1\}$.
  In this norm, $\opoly{n}{E}$ is a Banach space.
  Now $\opoly{n}{E}$ also has a lattice structure
  and so another choice of norm is the 
  \emph{regular norm}, defined by
  $\|P\|_r = \| |P| \|_\infty$, where $|P|$
  is the absolute value of $P$.  In this norm,
  $\opoly{n}{E}$ is a Banach lattice.
  The existence of these two
  norms was first observed by Bu and Buskes 
  \cite{BuBuskes12} and is hinted at in the
  paper of Sundaresan \cite{Sund}.  
  These  norms are equivalent, but
  we shall see that they
  have significantly different geometric properties.
  
  The second phenomenon is the influence of the 
  parity of the degree $n$ on the structure
  of the space $\opoly{n}{E}$ for the two norms.
  Bu and Buskes \cite{BuBuskes12} showed that, when $n$ is odd,
  the supremum and regular norms on $\opoly{n}{E}$
  are the same and
  that they are equivalent when $n$ is even.  We sharpen their results,
  using the strategy of working first on 
  $\opoly{n}{\CK}$ and then extending to 
  general Banach lattices.  
  The integral representation (\ref{e:IntRep})
  gives a canonical isomorphism between
  $\opoly{n}{\CK}$ and $\MK$, the space of 
  regular Borel signed measures on $K$.
  The regular norm on $\opoly{n}{\CK}$  corresponds to the 
  usual variation norm on $\MK$, but 
  the supremum norm is identified with 
  a different norm on $\MK$, given by
  $\|\mu\|_0 = 
  \max\{\|\mu^+\|_1,\|\mu^-\|_1\}$.
  We show that $(\MK, \|\cdot\|_0)$
  is isometrically isomorphic to 
  the dual space of $\CK$,
  where $\CK$ is endowed 
  with the norm $\normc{x}=
  \|x^+\|_\infty + \|x^-\|_\infty$ and we
  show that this norm is closely related
  to the \emph{diameter seminorm}
  (see, for example, \cite{Cabello}).  We use these identifications
  to give a complete description of the extreme
  points of the unit ball of $\opoly{n}{\CK}$
  for both norms, extending the results in \cite{CLZ1}. 
  Our starting point is a characterisation of the 
  extreme points in $\CK$ for the norm $\normcdot$
  and $\MK$ for the norm $\|\cdot\|_0$.
  This allows us to prove a  Banach-Stone
  theorem for $(\CK,\normcdot)$.  
  
  We finish with a study of the exposed points of
  the unit ball of the space $\opoly{n}{\CK}$.
  The identification of this space with the
  space of measures $\MK$, which is a dual space
  for both norms, allows us to use the
  theory of {\v S}mul'yan
   \cite{Smulyan1,Smulyan2}.  Using this machinery,
  we characterise the weak$^*$ exposed and the 
  weak$^*$ strongly exposed points of the unit ball.

\subsection*{Preliminaries}

Let $E$ be a real Banach space and let $n$ be
a natural number.  A function $P\colon E\to \R$ 
is an \emph{$n$-homogeneous polynomial} if there exists
a necessarily unique, bounded $n$-linear function
$A\colon E^n \to \R$ such that $P(x)= A(x,\dots,x)$
for all $x\in E$.  We write $P=\widehat{A}$ if $P$ and $A$
are related in this way.  The space $\npoly{n}{E}$
of $n$-homogeneous polynomials is a Banach space
with the supremum norm, 
$$
\|P\|_\infty= \sup\{|P(x)|:
x\in E, \|x\|\le 1 \} \,.
$$
The Banach space $\bigl(\npoly{n}{E},\|\cdot\|_\infty\bigr)$
is a dual space. We refer to the book by Dineen \cite{Dineen}
for this and other facts about $n$-homogeneous polynomials.

Now assume that $E$ is a Banach lattice. A partial order
is defined on $\npoly{n}{E}$ by $P=\widehat{A} \le
Q= \widehat{B}$ if $A(x_1,\dots,x_n) \le 
B(x_1,\dots,x_n)$ for all $x_1,\dots,x_n\ge 0$.
In particular, an $n$-homogeneous polymonial $P$
is said to be \emph{positive} if $P\ge 0$ in the
sense of this order and $P$ is \emph{regular} if it
is the difference of two positive $n$-homogeneous
polynomials.  The regular polynomials are precisely
those that have an \emph{absolute value}, which is
given by the formula
\begin{equation}\label{e:AbsFormula}
|P|(x) = \sup\Bigl\{\, \sum_{i_1,\dots,i_n}
|A(u^1_{i_1},\dots,u^n_{i_n})|: u^1,\dots,u^n \in \Pi(x)\Bigr\}\,,
\end{equation}
where $\Pi(x)$ denotes the set of partitions of $x$,
namely, all finite sets of positive elements of $E$
whose sum is $x$ \cite{BuBuskes12}.

The space $\rhpolys{n}{E}$ of regular $n$-homogeneous
polynomials on $E$ is a Banach lattice with the
\emph{regular norm},
$$
\|P\|_r = \| \,|P|\,\|_\infty\,.
$$
We have $\|P\|_\infty \le \|P\|_r$ and in general
these norms are not equivalent on $\rhpolys{n}{E}$.
Every regular $n$-homogeneous polynomial $P$ can
be decomposed canonically as the difference of
two positive $n$-homogeneous polynomials, so that 
$P=P^+ - P^-$ and $|P|= P^+ + P^-$.
We refer to  the paper 
of Bu and Buskes \cite{BuBuskes12} for further details.
For example, they show that $\bigl(\rhpolys{n}{E},\|\cdot\|_r\bigr)$  is a dual Banach lattice.

Let $K$ be a compact, Hausdorff space.  The space $\CK$
of continuous real functions on $K$ is a Banach
lattice with the supremum norm, 
$\|x\|_\infty= \sup\{|x(t)| : t\in K\}$.
We denote by $\MK$ the space of regular Borel signed
measures on $K$.  Then the Banach lattice dual
of $\CK$ can be identified with $\MK$ under the 
variation norm, which we denote by $\|\cdot\|_1$.
Thus, 
$$
\|\mu\|_1 = |\mu|(K) = \mu^+(K)+\mu^-(K)
=\|\mu^+\|_1 + \|\mu^-\|_1\,,
$$
 where $\mu^+$, $\mu^-$
are the positive and negative parts of $\mu$.


\section{Orthogonally additive $n$-homogeneous polynomials}

Let $E$ be a Banach lattice and $n$ a positive integer.
A  function  $P\colon E\to \R$
is called an \emph{orthogonally additive $n$-homogeneous
polynomial} if $P$ is a bounded $n$-homogeneous polynomial
with the property that $P(x+y)=P(x)+P(y)$
 whenever $x,y\in E$ are disjoint.  The space of
 orthogonally additive $n$-homogeneous polynomials
 on $E$ is denoted by $\opoly{n}{E}$.  It is easy to 
 see that $\opoly{n}{E}$ is a closed subspace
 of  the space $\bigl(\npoly{n}{E} ,\|\cdot\|_\infty\bigr)$ of bounded $n$-homogeneous
 polynomials with the supremum norm.
 Thus $\opoly{n}{E}$, with this norm, is a Banach
 space.  When $n=1$, this space is simply the dual
space $E'$, since every bounded linear functional
is orthogonally additive.
 
 We have the following integral representation
 for orthogonally additive $n$-homogeneous polynomials
 on $C(K)$ spaces, due to P\'erez-Garc\'ia and
 Villanueva \cite{PV} and Benyami, Lassalle and Llavona
 \cite{BLL} (see also \cite{CLZ1}).
 \begin{theorem}
 	\label{Rep}
 	Let $K$ be a compact, Hausdorff topological
 	space.  For every orthogonally additive 
 	$n$-homogeneous polynomial $P$ on $C(K)$
 	there is a regular Borel signed measure $\mu$ on
 	$K$ such that 
 	$$
 	P(x) = \int_K x^n\,d\mu
 	$$
 	for all $x\in C(K)$.
 \end{theorem}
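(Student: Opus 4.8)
The plan is to establish the integral representation by first reducing the problem to a study of the diagonal values of the associated symmetric $n$-linear form, and then invoking a Riesz-type representation theorem. Given the orthogonally additive polynomial $P=\widehat{A}$, I would begin by analysing how orthogonal additivity constrains the symmetric $n$-linear form $A$. The key structural fact, due to the orthosymmetry correspondence cited in the introduction (Bu--Buskes), is that $P$ is orthogonally additive if and only if $A$ is \emph{orthosymmetric}, that is, $A(x_1,\dots,x_n)=0$ whenever two of the arguments have disjoint supports. This is the engine that converts a genuinely multilinear object into something governed by a single scalar measure.

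The core of the argument is to produce the measure $\mu$. First I would show that the map $x\mapsto P(x)=A(x,\dots,x)$, restricted to positive elements, behaves like an integral against a positive functional on the products $x^n$. Concretely, I would consider the linear functional defined on the dense subspace of $C(K)$ spanned by $n$-th powers, attempting to define $L(x^n):=P(x)$ and extending by linearity to $\operatorname{span}\{x^n : x\in C(K)\}$. The first obstacle is well-definedness: one must verify that if $\sum_i \lambda_i x_i^n = \sum_j \eta_j y_j^n$ as functions in $C(K)$, then $\sum_i \lambda_i P(x_i)=\sum_j \eta_j P(y_j)$. Here orthogonal additivity is essential --- it allows one to localise, handling functions supported on disjoint pieces independently and reducing identities to pointwise statements on small neighbourhoods. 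Using a partition-of-unity argument subordinate to a finite open cover, I would approximate an arbitrary positive $x$ by functions that are locally constant up to small error, so that $P(x)$ is approximated by sums of the form $\sum_k c_k^n \, \mu_0(U_k)$ for a finitely additive set function $\mu_0$.

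Having defined a bounded linear functional $L$ on the uniformly dense subalgebra generated by the powers (the Stone--Weierstrass theorem guarantees density, since the $n$-th powers together with constants separate points and the algebra they generate is all of $C(K)$), I would extend $L$ to a bounded linear functional on all of $C(K)$ by continuity, using the estimate $|P(x)|\le \|P\|_\infty \|x\|_\infty^n$ to control the norm. The Riesz representation theorem then yields a regular Borel signed measure $\mu$ with $L(f)=\int_K f\,d\mu$ for all $f\in C(K)$. In particular, taking $f=x^n$ recovers $P(x)=\int_K x^n\,d\mu$ for positive $x$.

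The final step is to remove the positivity restriction and confirm the formula holds for all $x\in C(K)$. For general $x$, I would split $x=x^+-x^-$ into disjoint positive and negative parts; orthogonal additivity gives $P(x^+-x^-)=P(x^+)+P(-x^-)$, and $n$-homogeneity gives $P(-x^-)=(-1)^n P(x^-)$. On the measure side, $\int_K x^n\,d\mu=\int_K (x^+)^n\,d\mu + (-1)^n\int_K (x^-)^n\,d\mu$ since $x^+$ and $x^-$ have disjoint supports, so the two expressions match term by term. I expect the main obstacle to be the well-definedness in the second paragraph: controlling the interaction between the nonlinearity of $x\mapsto x^n$ and the linear structure one wishes to impose requires a careful localisation argument, and it is precisely here that orthogonal additivity must be leveraged to reduce multilinear data to a single additive set function. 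Uniqueness of $\mu$, if desired, follows immediately from the Riesz representation theorem, since $\mu$ is determined by its action on all of $C(K)$.
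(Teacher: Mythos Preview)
The paper does not give its own proof of this theorem: it is stated as a known result, attributed to P\'erez-Garc\'ia and Villanueva and to Benyamini, Lassalle and Llavona, with a further reference to \cite{CLZ1}. Later the paper remarks that the \cite{CLZ1} argument proceeds by identifying $\opolyK{n}$ as a dual space and determining the extreme points of its unit ball --- an approach quite different from yours.

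Your direct strategy is reasonable in spirit, but it takes an unnecessarily hard road. You propose to define a linear functional $L$ on $\operatorname{span}\{x^n:x\in C(K)\}$ by $L(x^n)=P(x)$ and then fight for well-definedness via localisation and partitions of unity; you correctly flag this as the main obstacle, and indeed your sketch does not resolve it. There is a much cleaner way to produce the measure that sidesteps this entirely: since $C(K)$ has a unit, set $\varphi(x):=A(x,1,\dots,1)$. This is manifestly a bounded linear functional on $C(K)$, so Riesz gives $\mu$ with $\varphi(x)=\int_K x\,d\mu$ immediately --- no well-definedness issue arises. The work then shifts to proving the identity $A(x,\dots,x)=A(x^n,1,\dots,1)$, or more generally $A(x_1,\dots,x_n)=A(x_1\cdots x_n,1,\dots,1)$, for which orthosymmetry is exactly the right tool: it holds trivially for mutually disjoint ``step'' functions, and passes to all of $C(K)$ by approximation and multilinearity. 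This is essentially how the paper itself uses the representation in reverse (see the injectivity argument for $J_n$, where $A(x,1,\dots,1)$ recovers $\mu$), and it is closer to the original proofs in the cited references. Your final paragraph, reducing general $x$ to $x^+$ and $x^-$, is fine.
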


In general, there is no guarantee that a Banach lattice
supports any non-trivial orthogonally additive polynomials of degree greater than one. Sundaresan
\cite{Sund} showed that there are no non-zero
orthogonally additive $n$-homogeneous polynomials
on $L_1[0,1]$ for $n>1$.  In the case of $\ell_1$,
it is easy to see that an $n$-homogeneous polynomial
$P$ is orthogonally additive if and only if there
exists a bounded sequence of real numbers, $(a_j)$,
such that 
$$
P(x) = \sum_{j=1}^\infty a_j x_j^n
$$
for every $x\in \ell_1$, and that $\|P\|_\infty = \sup_j |a_j|$.
Thus $\opoly{n}{\ell_1}$ is isometrically isomorphic
to $\ell_\infty$ for every $n$.

To put the results of the previous paragraph in a 
general context, we recall that a Banach lattice 
$E$ is an AL-space if the norm is additive on the 
positive cone: $\|x+y\| = \|x\| + \|y\|$ for
all $x,y\ge 0$.  The Kakutani representation theorem
\cite{KakAL,Lacey} states that 
every AL-space $E$ can be decomposed
into a disjoint sum of  copies of 
$\ell_1$ and $L_1$ spaces.
Accordingly,
$E$ is Banach lattice isometrically isomorphic to
a space of the form
$$
\Bigl[\ell_1(\Gamma) \oplus \bigl(
\bigoplus_{\alpha\in A } 
L_1[0,1]^{m_\alpha}\bigr )\Bigr]_1
$$
In this representation, the unit basis vectors $e_\gamma$
in $\ell_1(\Gamma)$ are in one-to-one correspondence
with the atoms in $E$ of unit norm. We recall that a positive 
element $x$ of $E$ is said to be an \emph{atom} if
$0\le y\le x$ implies that $y$ is a scalar multiple
of $x$.    We can write the 
second component in this representation as 
$L_1(\mu)$, where $\mu$ is the product of the Lebesgue
measures on the sets $[0,1]^{m_\alpha}$.  Thus, we
see that $E$ can be represented as the disjoint sum
$\ell_1(\Gamma)\oplus_1 L_1(\mu  )$, where the 
measure $\mu$ is  nonatomic. 

\begin{proposition}
	Let $E$ be an AL-space and let $n>1$.
	There is a non-zero orthogonally additive
	$n$-homogeneous polynomial on $E$ if and only
	if $E$ contains at least one atom.
\end{proposition}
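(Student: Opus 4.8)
The plan is to use the Kakutani decomposition recalled above, writing $E$ (up to Banach lattice isometry) as $\ell_1(\Gamma)\oplus_1 L_1(\mu)$ with $\mu$ nonatomic, under which the atoms of $E$ of unit norm correspond to the basis vectors $e_\gamma$, $\gamma\in\Gamma$. Thus $E$ contains an atom precisely when $\Gamma\neq\emptyset$, and the two implications can be treated separately.

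For the \emph{if} direction, suppose $E$ has an atom $a$, which we may take of unit norm. Since $a$ is an atom, the band it generates is $\spa(a)$, and as an AL-space has order continuous norm this band is a projection band; write the band projection as $x\mapsto \phi(x)\,a$ for a bounded positive functional $\phi$ with $\phi(a)=1$. I would then set $P(x)=\phi(x)^n$, a bounded $n$-homogeneous polynomial with $P(a)=1$. To see that $P$ is orthogonally additive, note that a band projection preserves disjointness: if $x\perp y$ then $\phi(x)\,a\perp\phi(y)\,a$, so $\bigl(|\phi(x)|\wedge|\phi(y)|\bigr)a=0$, forcing $\phi(x)\phi(y)=0$ and hence $P(x+y)=\bigl(\phi(x)+\phi(y)\bigr)^n=\phi(x)^n+\phi(y)^n=P(x)+P(y)$. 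Concretely, in the coordinates above this is just $P(x)=x_\gamma^{\,n}$, already seen to be orthogonally additive on the $\ell_1(\Gamma)$ summand, so $\opoly{n}{E}$ is non-trivial.

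For the \emph{only if} direction I would argue by contraposition: assume $E$ has no atom, so $\Gamma=\emptyset$ and $E=L_1(\mu)$ with $\mu$ nonatomic, and show every $P\in\opoly{n}{E}$ vanishes. The key point is a splitting estimate. Fix a positive $a\in E$. By nonatomicity, for each $m\in\N$ the element $a$ can be written as a sum $a=a_1+\dots+a_m$ of pairwise disjoint positive elements with $\|a_i\|=\|a\|/m$. Orthogonal additivity then gives $P(a)=\sum_{i=1}^m P(a_i)$, whence
$$
|P(a)|\le \sum_{i=1}^m |P(a_i)| \le m\,\|P\|_\infty\Bigl(\frac{\|a\|}{m}\Bigr)^{n}=\|P\|_\infty\,\|a\|^{n}\,m^{1-n}.
$$
Since $n>1$ the right-hand side tends to $0$ as $m\to\infty$, so $P(a)=0$ for every positive $a$. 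Finally, for arbitrary $x\in E$ the parts $x^+$ and $x^-$ are disjoint, so orthogonal additivity together with $n$-homogeneity yields $P(x)=P(x^+)+(-1)^nP(x^-)=0$; hence $P=0$.

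The main obstacle is the splitting step in the atomless case: producing, for each $m$, a decomposition of a given positive element into $m$ pairwise disjoint pieces of equal norm. This rests on the fact that a finite nonatomic measure attains every value between $0$ and its total mass, applied to the finite nonatomic measure $a\,d\mu$, which can therefore be partitioned into $m$ sets of equal mass. It is the only place where nonatomicity is genuinely used; everything else is a bounded, homogeneous bookkeeping argument.
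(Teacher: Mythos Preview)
Your proof is correct. The \emph{if} direction is essentially the paper's argument dressed intrinsically via the band projection onto the atom; the paper simply writes down $P(x)=x_{\gamma_0}^n$ in Kakutani coordinates.

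The \emph{only if} direction, however, is genuinely different from the paper's and in fact cleaner. The paper argues (for $n=2$, leaving the general case implicit) through the orthosymmetry of the associated bilinear form and the tensor-product identification $L_1(\mu)\hat\otimes_\pi L_1(\mu)\cong L_1(\mu^2)$, representing $A$ by a kernel $g\in L_\infty(\mu^2)$ and then using that the diagonal has $\mu^2$-measure zero when $\mu$ is nonatomic. Your splitting estimate bypasses all of this: nonatomicity is used only to partition $a$ into $m$ disjoint pieces of equal $L_1$-norm, after which boundedness and $n$-homogeneity give $|P(a)|\le \|P\|_\infty\|a\|^n m^{1-n}\to 0$. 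This is more elementary, treats every $n>1$ uniformly, and avoids the tensor-product machinery; the paper's route, on the other hand, makes explicit the link with orthosymmetric multilinear forms and the vanishing of the diagonal, which is conceptually informative but heavier.
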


\begin{proof}
	Let $\ell_1(\Gamma)\oplus_1 L_1(\mu  )$
	be the Kakutani representation of $E$
	as described above.
	
	Suppose that $E$ contains an atom.  Then the set $\Gamma$ in the Kakutani representation is non-empty.
	Choose $\gamma_0 \in \Gamma$ and define 
	$P(x) = x_{\gamma_0}^n$ for $x= (x_\gamma)\in
	\ell_1(\Gamma)$ and $P(x) = 0$ for $x\in L_1(\mu)$.
	Then $P$ is a non-zero orthogonally additive 
	$n$-homogeneous polynomial.
	
	Conversely, suppose that $P$ has no atoms.
	Then the Kakutani representation of $P$
	is $L_1(\mu)$ where the measure $\mu$ is 
	 nonatomic.  The proof in this case
	can be gleaned from \cite{Sund}, but we can
	 give a direct proof as follows.
	We treat the case $n=2$ for simplicity.
	Suppose that $P$ is an orthogonally additive
	$2$-homogeneous polynomial on $L_1(\mu)$,
	where $\mu$ is  nonatomic.  
	Let $A$ be the bounded, symmetric 
	bilinear form that generates $P$.
	Then $A$ is orthosymmetric:
	if $x,y$ are disjoint, then $A(x,y)=0$
	\cite[Lemma 4.1]{BuBuskes12}.
	It follows from the fact that 
	$L_1(\mu)\hat{\otimes}_\pi L_1(\mu)$
	is isometrically isomorphic to 
	$L_1(\mu^2)$ that there exists 
	$g\in L_\infty(\mu^2)$ such that
	$$
	A(x,y) = \int x(s)y(t) g(s,t)\,d\mu^2(s,t)
	$$
	If we take $x$, $y$ to be the characteristic functions
	of arbitrary disjoint measurable sets, this 
	integral is zero and so we have
	$$
	A(x,y) =
	\int_D x(t) y(t) g(t,t)\,d\mu^2\
	$$
	for all $x,y\in L_1(\mu)$, where $D$ is the diagonal.
	However,
	if $\mu$ has no atoms, then the product measure
	of the diagonal is zero.  Hence $P(x)=0$ for 
	every $x$.
\end{proof}

The Banach lattices
$L_1(\mu)$, where $\mu$ is  nonatomic, do not support
any real valued lattice homomorphisms.  Our next result
indicates that the existence of non-trivial 
$n$-homogeneous orthogonally additive polynomials
on a Banach lattice
is closely related to the existence of lattice homomorphisms.

\begin{proposition}
	Let $E$ be a Banach lattice, let $\varphi\in E'$ and let $n\ge 2$.  The $n$-homogeneous polynomial defined by $P(x)= \varphi(x)^n$
	is orthogonally additive if and only if either $\varphi$
	or $-\varphi$  is a lattice homomorphism.
\end{proposition}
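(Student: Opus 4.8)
The plan is to reduce the statement to the single pointwise condition that $\varphi(x)\varphi(y)=0$ whenever $x$ and $y$ are disjoint, and then to recognise this as the assertion that $\varphi$ is disjointness preserving, hence (up to sign) a lattice homomorphism. The easy direction is immediate: if $\psi=\varphi$ or $\psi=-\varphi$ is a lattice homomorphism, then for disjoint $x,y$ we have $|\psi(x)|\wedge|\psi(y)| = \psi(|x|)\wedge\psi(|y|) = \psi(|x|\wedge|y|) = 0$, so one of $\varphi(x),\varphi(y)$ vanishes; consequently $(\varphi(x)+\varphi(y))^n = \varphi(x)^n+\varphi(y)^n$, which is exactly orthogonal additivity of $P$.

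For the converse, I would first extract the pointwise condition. If $x,y\ge 0$ are disjoint then $x$ and $ty$ are disjoint for every $t>0$, so orthogonal additivity and $n$-homogeneity give $(\varphi(x)+t\varphi(y))^n = \varphi(x)^n + t^n\varphi(y)^n$ for all $t>0$. Two polynomials in $t$ agreeing on an interval are identical, and comparing the coefficient of $t$ yields $n\,\varphi(x)^{n-1}\varphi(y)=0$; since $n\ge 2$ this forces $\varphi(x)\varphi(y)=0$. Writing an arbitrary disjoint positive pair as $x=a^+$, $y=a^-$, this says $\varphi(a^+)\varphi(a^-)=0$ for every $a\in E$. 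Granting for the moment that $\varphi$ has constant sign on the positive cone, say $\varphi\ge 0$, I can finish: for any $w$, the parts $w^+,w^-$ are disjoint, so $\min(\varphi(w^+),\varphi(w^-))=0$, whence $\varphi(|w|)=\varphi(w^+)+\varphi(w^-)=\max(\varphi(w^+),\varphi(w^-))=|\varphi(w)|$, and $\varphi(|w|)=|\varphi(w)|$ characterises lattice homomorphisms (the case $\varphi\le 0$ gives $-\varphi$).

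The main obstacle is therefore to show that the pointwise condition forces $\varphi$ to be of one sign on the cone. Suppose not, so there are $u,v\ge 0$ with $\varphi(u)>0>\varphi(v)$. For $\lambda\ge 0$ I would set $g(\lambda)=\varphi((u-\lambda v)^+)$ and $h(\lambda)=\varphi((\lambda v-u)^+)$. Since the lattice operations and $\varphi$ are norm continuous, $g$ and $h$ are continuous in $\lambda$; the pointwise condition gives $g(\lambda)h(\lambda)=0$, while $g(\lambda)-h(\lambda)=\varphi(u)-\lambda\varphi(v)\ge\varphi(u)>0$. The sandwich $\lambda v-u\le(\lambda v-u)^+\le\lambda v$ writes $(\lambda v-u)^+=\lambda v-r_\lambda$ with $0\le r_\lambda\le u$, so $|\varphi(r_\lambda)|\le\|\varphi\|\,\|u\|$ and hence $h(\lambda)=\lambda\varphi(v)-\varphi(r_\lambda)\to-\infty$; in particular $h(\lambda)<0$, and so $g(\lambda)=0$, for all large $\lambda$. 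On the other hand, wherever $g(\lambda)\ne 0$ we must have $h(\lambda)=0$, so there $g(\lambda)=\varphi(u)-\lambda\varphi(v)\ge\varphi(u)$; thus $g$ omits the interval $(0,\varphi(u))$ entirely. But $g$ is continuous with $g(0)=\varphi(u)$ and $g(\lambda)=0$ for large $\lambda$, so by the intermediate value theorem $g$ must take every value between $0$ and $\varphi(u)$ --- a contradiction. This rules out $\varphi$ changing sign and completes the argument. I expect this continuity and intermediate-value step to be the delicate point; the remaining reductions are bookkeeping.
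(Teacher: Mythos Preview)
Your proof is correct and takes a genuinely different route from the paper's.

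Both proofs reduce the converse direction to the same pointwise condition (either $\varphi(x^+)=0$ or $\varphi(x^-)=0$ for every $x$), using the binomial expansion of $(\varphi(x^+)+t\varphi(x^-))^n$. The divergence is in how one then shows $\varphi$ has constant sign on the positive cone. The paper invokes the Kakutani representation: on each principal ideal $E_a\cong C(K)$, the functional $\varphi$ is given by a measure $\mu$, and the pointwise condition forces the support of $\mu$ to be a single point, so $\pm\varphi$ is positive on $E_a$; a directedness argument then extends this to all of $E$. Your argument instead stays entirely inside the Banach lattice: assuming $\varphi(u)>0>\varphi(v)$ for some $u,v\ge 0$, you study the continuous functions $g(\lambda)=\varphi((u-\lambda v)^+)$ and $h(\lambda)=\varphi((u-\lambda v)^-)$, use $gh\equiv 0$ together with the linear relation $g-h=\varphi(u)-\lambda\varphi(v)$ to show $g$ omits the interval $(0,\varphi(u))$, and derive a contradiction from the intermediate value theorem. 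This is more elementary---it uses only continuity of the lattice operations and of $\varphi$, and avoids representation theory and measure-theoretic reasoning entirely. The paper's approach, on the other hand, fits naturally with the theme developed elsewhere in the article of reducing questions about general Banach lattices to $C(K)$ via principal ideals.
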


\begin{proof}
	Suppose that $\varphi$ or $-\varphi$ is a lattice homomorphism. Then if $x$ and $y$ are disjoint, we have either $\varphi(x)$
	or $\varphi(y)=0$ and so $P(x+y)=P(x)+P(y)$.
	
	Conversely, suppose that $P=\varphi^n$ is orthogonally additive.  For every $x\in E$, the vectors $x^+$ and $tx^-$ are disjoint for all $t\in \R$. Therefore
	$$
	\varphi(x^+)^n+t^k\varphi(x^-)^n = P(x^+ + tx^-) =
	\sum_{j=0}^n \binom{n}{j} \varphi(x^+)^{n-j}\varphi(x^-)^j t^j\,.
	$$
	for every $t\in \R$.
	Hence either $\varphi(x^+)=0$ or $\varphi(x^-)=0$.  
	If we can show that $\varphi$ (or $-\varphi$) is positive,
	then it follows that $\varphi$ (or $-\varphi$) is a lattice homomorphism.
	
	Let $a$ be a positive element of $E$.  The principal 
	ideal $E_a$ generated by $a$ is isometrically Banach lattice
	isomorphic to $C(K)$ for some compact
	Hausdorff topological space $K$.  The functional $\varphi$ is represented by a regular Borel signed measure
	$\mu$ on $K$ and the fact that $\varphi(x^+)$ or
	$\varphi(x^-)=0$ for all $x\in E_a$ implies that the support of $\mu$
	consists of a single point.  It follows that either
	$\varphi$ or $-\varphi$ is positive on $E_a$.  Now 
	$E$ is the union of the principal ideals $E_a$, which
	are upwards directed by inclusion.  Thus, if $\varphi$
	(or $-\varphi$) is positive on one $E_a$, then $\varphi$
	(or $-\varphi$) is positive on all of $  E$.
\end{proof}

A Banach lattice $E$ is an AM-space if the norm
has the property that $x\wedge y=0$ implies
$\|x\vee y\| = \max\{\|x\|,\|y\|\}$.  In contrast with 
AL-spaces, there is a good supply of orthogonally
additive $n$-homogeneous polynomials on every AM-space.  
The 
Kakutani representation theorem for AM-spaces
 \cite{KakAM} shows
that the real valued lattice homomorphisms on an
AM-space $E$ separate
the points of $E$.  It follows that there is a rich
supply of orthogonally additive $n$-homogeneous
polynomials of every degree on $E$.

 \bigskip
 We now look at some properties of orthogonally additive
 polynomials on general Banach lattices.  Our starting
 point is the fact that every orthogonally additive
 $n$-homogeneous polynomial on a Banach lattice $E$ 
 is regular.  This has been shown by Toumi \cite[Theorem 1]{ToumiReg}.  One may also argue as follows.  Let $P$
 be an orthogonally additive $n$-homogeneous polynomial
 on a Banach lattice $E$.  As $E$ is the upwards
 directed union of its 
 principal ideals, it suffices to show that $P$ is
 regular on each of them.  Since each principal ideal
 is Banach lattice isometrically isomorphic to a
 $C(K)$, we can use the integral representation 
 in Theorem \ref{Rep}. Then the Jordan decomposition
 of the representing measure gives a decomposition of 
 the polynomial into the difference of two positive
  orthogonally
 additive $n$-homogeneous polynomials.
 Therefore $P$ is regular.
 
Let $P= \widehat{A}$ be a regular $n$-homogeneous polynomial $P$ on $E$. 
The absolute value of $P$ is given by
 \cite{BuBuskes12,Loane}
 \begin{equation}\label{e:AbsFormula2}
 |P|(x) = \sup\Bigl\{\, \sum_{i_1,\dots,i_n}
 |A(u^1_{i_1},\dots,u^n_{i_n})|: u^1,\dots,u^n \in \Pi(x)\Bigr\}
 \end{equation}
 for $x\ge 0$, where $\Pi(x)$ denotes the set 
 of partitions of $x$, namely, all finite sets
 of positive vectors whose sum is $x$.  In  general,
 we have
 \begin{equation}\label{e:Abs}
 |P(x)| \le |P|(|x|)
 \end{equation}
 for every $x\in E$ and $|P|$ is the smallest
 positive $n$-homogeneous polynomial, in the sense
 of the lattice structure of $\rhpolys{n}{E}$, with 
 this property.  The space $\rhpolys{n}{E}$
 is a Banach lattice in the \emph{regular norm},
 $$
 \|P\|_r = \| \,|P|\, \|_\infty\,.
 $$
 It follows from (\ref{e:Abs}) that $\|P\|_\infty \le \|P\|_r$
 for every $P\in \rhpolys{n}{E}$.
 In general, these norms are not equivalent.
 
 Now $\opoly{n}{E}$ is complete in the regular norm;
 indeed, it is even a dual Banach lattice 
 \cite[Theorem 5.4]{BuBuskes12}.  It follows that
 the supremum and regular norms are equivalent
 on this space.
 Thus, there is a sequence $(C_n)$ of 
 positive real numbers such that 
 $\|P\|_r \le C_n\, \|P\|_\infty$ for every
 $n$ and every $P\in \opoly{n}{E}$.
 Bu and Buskes \cite{BuBuskes12} show that 
 the two norms are the same for odd values of $n$.
 For even values of $n$,
 they show that $C_n \le n^n/n!$, the polarization
 constant.  We shall show that, in fact, $C_n=2$
 for even values of $n$ and that this is sharp.
 This will follow from estimates we give for the value
 of $|P|$ at positive points in $E$.
 
 If $\varphi$ is a bounded linear functional on $E$,
 then \cite{Meyer-Nieberg}
 $$
 |\varphi|(x) = \sup\{|\varphi(y)|: |y|\le x\}
 $$
 for every $x\ge 0$.  It would be suprising if there
 were such a simple formula for $|P|(x)$ when $P$
 is a regular $n$-homogeneous polynomial.  As a
 linear functional, $P$ acts on an $n$-fold symmetric
 tensor power of $E$ and the set of vectors $y$ 
 satisfying $|y|\le x$ is now a set of tensors,
 rather than elements of $E$.  However, if $P$
 is orthogonally additive, it is possible to 
 establish a relatively simple estimate for 
 the values of $|P|$.
 
  \begin{theorem} \label{p: basic}
 	Let $P$ be an orthogonally additive $n$-homogeneous polynomial on the  Banach lattice $E$.  
 	\begin{enumerate}
 		\item[(a)]
 		If $n$ is odd, then 
 		$$
 		|P|(x)   =
 		\sup\bigl\{|P(y)|: |y| \le x\bigr\}\,.
 		$$
 		for every $x\ge 0$ in $E$.
 		\item[(b)]
 		If  $n$ is even, then
 		$$
 		|P|(x)   \le 2\,
 		\sup\bigl\{|P(y)|:  |y| \le x\bigr\}\,.
 		$$
 		for every $x\ge 0$ in $E$.
 	\end{enumerate}
 \end{theorem}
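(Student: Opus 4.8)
The plan is to reduce both statements to the case of $C(K)$ with $x$ equal to the unit function, and then to exploit the integral representation of Theorem~\ref{Rep}. Before reducing, I record the ``easy'' inequality, valid for both parities: if $|y|\le x$ then monotonicity of the positive polynomial $|P|$ together with \eqref{e:Abs} gives $|P(y)|\le |P|(|y|)\le |P|(x)$, so that $\sup\{|P(y)|:|y|\le x\}\le |P|(x)$ always holds. For the reduction itself, fix $x\ge 0$ and pass to the principal ideal $E_x$. Every partition of $x$ consists of positive elements dominated by $x$ and hence lying in $E_x$, so formula \eqref{e:AbsFormula2} shows that $|P|(x)$ is unchanged whether computed in $E$ or in $E_x$; likewise every $y$ with $|y|\le x$ lies in $E_x$, so the right-hand supremum is also intrinsic to $E_x$. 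Since $E_x$ is Banach-lattice isometric to some $C(K)$ with $x$ corresponding to the unit $\mathbf{1}$ and $|y|\le x$ corresponding to $\|y\|_\infty\le 1$, I have reduced both parts to an assertion about $P(y)=\int_K y^n\,d\mu$ in $C(K)$.

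The first ingredient in $C(K)$ is an upper bound for $|P|(\mathbf{1})$. Put $Q^{\pm}(y)=\int_K y^n\,d\mu^{\pm}$; the associated symmetric $n$-linear forms $(y_1,\dots,y_n)\mapsto \int_K y_1\cdots y_n\,d\mu^{\pm}$ are nonnegative on positive tuples, so $Q^+$ and $Q^-$ are positive polynomials and $P=Q^+-Q^-$. Since $2Q^-=\,(Q^++Q^-)-P\ge 0$ and $2Q^+=\,(Q^++Q^-)+P\ge 0$, the polynomial $Q^++Q^-$ is an upper bound of both $P$ and $-P$ in the vector lattice $\opoly{n}{C(K)}$; as $|P|=P\vee(-P)$ is the least such upper bound, $|P|\le Q^++Q^-$ and therefore $|P|(\mathbf{1})\le \mu^+(K)+\mu^-(K)=\|\mu\|_1$.

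The second ingredient computes the supremum by an $n$-th root trick. For a target $z\in C(K)$ the function $y=\operatorname{sgn}(z)\,|z|^{1/n}$ (for $n$ odd) or $y=z^{1/n}$ with $z\ge 0$ (for $n$ even) is continuous, lies in the unit ball when $\|z\|_\infty\le 1$, and satisfies $y^n=z$; conversely $y^n\ge 0$ for $n$ even. Hence $\{y^n:\|y\|_\infty\le 1\}$ is the entire unit ball of $C(K)$ when $n$ is odd and exactly $\{z:0\le z\le\mathbf 1\}$ when $n$ is even. For odd $n$ this yields $\sup\{|P(y)|:\|y\|_\infty\le1\}=\sup\{\,|\int_K z\,d\mu|:\|z\|_\infty\le1\}=\|\mu\|_1$ by $C(K)$--$\MK$ duality; combined with $|P|(\mathbf{1})\le\|\mu\|_1$ and the easy inequality, all three quantities coincide, proving (a). For even $n$, $\sup\{|P(y)|:\|y\|_\infty\le1\}=\sup\{\,|\int_K z\,d\mu|:0\le z\le\mathbf 1\}=\max\{\mu^+(K),\mu^-(K)\}$, where the last equality uses regularity and Urysohn's lemma to approximate the Hahn sets of $\mu$; then $|P|(\mathbf{1})\le \mu^+(K)+\mu^-(K)\le 2\max\{\mu^+(K),\mu^-(K)\}$, which is part (b).

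I expect the main obstacle to be the clean justification that both the absolute value $|P|(x)$ and the supremum $\sup\{|P(y)|:|y|\le x\}$ are intrinsic to the principal ideal $E_x$, so that the whole problem legitimately descends to $C(K)$; once this is in place the parity dichotomy becomes transparent, arising solely from whether $y\mapsto y^n$ carries the unit ball onto the full ball (odd) or only onto its positive part (even), and the constant $2$ in the even case is precisely the gap between $\mu^+(K)+\mu^-(K)$ and $\max\{\mu^+(K),\mu^-(K)\}$, with equality forced when $\mu^+(K)=\mu^-(K)$.
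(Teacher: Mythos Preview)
Your proof is correct and follows the same overall strategy as the paper: reduce to the principal ideal $E_x\cong C(K)$ with $x\leftrightarrow\mathbf 1$, invoke the integral representation of Theorem~\ref{Rep}, and use the $n$-th root map to identify $\{y^n:\|y\|_\infty\le 1\}$ with the full unit ball (odd) or its positive part (even). The execution differs in two minor but pleasant ways. For the upper bound $|P|(\mathbf 1)\le\|\mu\|_1$, the paper works directly from the partition formula \eqref{e:AbsFormula2} and the multilinear form $A(x_1,\dots,x_n)=\int x_1\cdots x_n\,d\mu$ to obtain $|P|(x)\le\int x^n\,d|\mu|$; your route via the lattice inequality $|P|\le Q^++Q^-$ sidesteps the partition formula entirely. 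For the even case, the paper splits an arbitrary $v$ with $|v|\le x^n$ as $v=v_1^n-v_2^n$ and bounds $|\int v\,d\mu|\le|P(v_1)|+|P(v_2)|$, whereas you compute $\sup\{|\int z\,d\mu|:0\le z\le\mathbf 1\}=\max\{\mu^+(K),\mu^-(K)\}$ explicitly via Hahn decomposition and Urysohn (this is essentially the later Theorem~\ref{p:isomorphism}(c)) and then use $\|\mu\|_1\le 2\max\{\mu^+(K),\mu^-(K)\}$. Both routes yield the same constant~$2$, and yours has the bonus of identifying the exact value of the supremum.
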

 
 \begin{proof}
 	Let $x\ge 0$.  
 	It follows from  (\ref{e:AbsFormula}) that the value $|P|(x)$ is unchanged
 	if we consider   $P$ as an $n$-homogeneous polynomial on
 	the principal ideal $E_x$ generated by $x$.
 	Now $E_x$  is Banach lattice
 	isomorphic to $C(K)$ for some compact topological space $K$.
 	Since $P$ is orthogonally addive  there exists a regular signed Borel measure $\mu$ on $K$
 	such that 
 	$$
 	P(y)= \int_K y^n \,d\mu\,.
 	$$
 	for every $y\in E_x\cong C(K)$.
 	The symmetric $n$-linear form on $C(K)^n$ that
 	generates $P$ is given by
 	$$
 	A(x_1,\dots,x_n) = \int_K x_1\dots x_n\,d\mu\,.
 	$$
 	Thus, for $x_1,\dots,x_n \ge 0$,
 	$$
 	|A(x_1,\dots,x_n)| \le \int_K x_1\dots x_n\,d|\mu|
 	$$
 	and it follows that 
 	$$
 	|P|(x) \le \int_K x^n\,d|\mu|
 	$$
 	for $x\ge 0$.

 	Now in general, for a nonnegative function $w\in C(K)$ we have 
 	$$
 	\int_K w\,d|\mu| =
 	\sup\Bigl\{\Bigl| \int_K g\,d\mu\Bigr|: 
 	g \in C(K), |g| \le w \Bigr\}\,,
 	$$
 	Therefore
 	$$
 	|P|(x) \le   \sup 
 	\Bigl\{\Bigl|\int_K y\,d\mu\Bigr| :  y\in  C(K), \;|y| \le x^n \Bigr\}\,.
 	$$
 	where we are  identifying elements of $E$ with 
 	continuous functions on $K$.  We now consider separately the cases where $n$ is odd and even.
 	
 	(a)	We first consider the case when $n$ odd.\\
 	If $|y|\le x^n$, let $v= y^{1/n}$.
 	Then $|v|\le x$ and $\int_K y\,d\mu = \int_K v^n\,d\mu$.  Therefore
 	$$
 	|P|(x) \le   \sup 
 	\Bigl\{\Bigl|\int_K v^n\,d\mu\Bigr| :  v\in E, |v| \le x \Bigr\}\,.
 	$$
 	Thus we have
 	$$
 	|P|(x) \le \sup \{|P(y)| : |y| \le x\}\,.
 	$$
 	and it is easy to see that the reverse inequality also holds.
 	
 	(b) We now consider the case when $n$ even.  \\
 	We have
 	$$
 	|P|(x) \le  \sup 
 	\Bigl\{\Bigl|\int_K y\,d\mu\Bigr| :  |y| \le x^n \Bigr\}\,.
 	$$
 	Given $v\in E_x\cong C(K)$ satisfying $|v|\le x^n$,
 	we define $v_1,v_2\in C(K)$   by
 	$$
 	v_1(t)= \begin{cases} v(t)^{1/n} &\text{if } v(t)\ge 0\\
 	0 &\text{if } v(t) <0
 	\end{cases} \qquad 
 	v_2(t)= \begin{cases} 0 &\text{if } v(t)\ge 0\\
 	|v(t)|^{1/n} &\text{if } v(t) <0
 	\end{cases}
 	$$
 	Then $v= v_1^n - v_2^n$, and so
 	$$
 	\Bigl| \int_K v\,d\mu\Bigr|
 	\le \Bigl|\int_K v_1^n\,d\mu\Bigr| +
 	\Bigl| \int_K v_2^n\,d\mu\Bigl| =
 	\bigl| P(v_1)\bigr| +\bigl|P(v_2)\bigr|\,.
 	$$
 	It follows from $|v|\le x^n$ that $0\le v_1,v_2 \le x$.
 	Therefore
 	\begin{equation} \label{e:bound}
 	|P|(x)   \le 2\,
 	\sup\bigl\{|P(y)|:  0\le y  \le x\bigr\} =
 	2\,\sup\bigl\{|P(y)|:  |y| \le x\bigr\}\,,
 	\end{equation}
 	since $n$ is even.
 	
 \end{proof}
 
 To see that the bound in (\ref{e:bound}) for even values of $n$ is sharp,
 consider the example $P(x)=x_1^n-x_2^n$ on $\R^2$
 with any Banach lattice norm.  The bound is attained
 for the vector $x=(1,1)$.
 
 \begin{corollary}\label{c: basic}
 	Let $P$ is an orthogonally additive $n$-homogeneous
 	polynomial on a Banach lattice $E$. Then 
 	$\|P\|_r = \|P\|_\infty$ if $n$ is odd and
 	$\|P\|_\infty \le \|P\|_r \le 2\,\|P\|_\infty$ if $n$ is even.
 	These inequalities are sharp.
 \end{corollary}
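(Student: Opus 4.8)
The plan is to deduce everything from Theorem~\ref{p: basic} together with the observation that the regular norm can be computed by evaluating $|P|$ on the positive cone only. First I would record the reduction
$$
\|P\|_r = \|\,|P|\,\|_\infty = \sup\{|P|(x) : \|x\|\le 1\} = \sup\{|P|(x) : x\ge 0,\ \|x\|\le 1\}.
$$
The last equality rests on the inequality $|P|(x)\le |P|(|x|)$, valid for every $x\in E$: when $|P|(x)\ge 0$ it is an instance of (\ref{e:Abs}) applied to the positive polynomial $|P|$ (for which the absolute value is $|P|$ itself), and when $|P|(x)<0$ it is trivial because $|P|(|x|)\ge 0$. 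Since $\|\,|x|\,\|=\|x\|$ in a Banach lattice, replacing $x$ by $|x|$ shows the supremum over the unit ball is already attained on its positive part.

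Next I would substitute the estimates of Theorem~\ref{p: basic}. For $x\ge 0$ these read $|P|(x)=\sup\{|P(y)|:|y|\le x\}$ when $n$ is odd and $|P|(x)\le 2\sup\{|P(y)|:|y|\le x\}$ when $n$ is even. Inserting these into the displayed formula produces an iterated supremum, over $x\ge 0$ with $\|x\|\le 1$ and over $y$ with $|y|\le x$. The key lattice identity is that the set of admissible $y$ is exactly the unit ball: if $|y|\le x$ and $\|x\|\le 1$ then $\|y\|=\|\,|y|\,\|\le 1$, while conversely any $y$ with $\|y\|\le 1$ is admissible by taking $x=|y|$. Collapsing the iterated supremum therefore gives $\|P\|_r=\sup\{|P(y)|:\|y\|\le 1\}=\|P\|_\infty$ in the odd case and $\|P\|_r\le 2\|P\|_\infty$ in the even case. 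Combined with the bound $\|P\|_\infty\le\|P\|_r$, which always holds by (\ref{e:Abs}), this yields both assertions of the corollary.

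For sharpness I would invoke the example already discussed, $P(x)=x_1^n-x_2^n$ on $\R^2$ equipped with the supremum (AM) norm. Here $|P|(x)=x_1^n+x_2^n$, so $\|P\|_r=2$, attained at $(1,1)$, while $\|P\|_\infty=1$, attained at $(1,0)$, which realises the constant $2$ for even $n$; the left inequality is realised, for instance, by the positive polynomial $x\mapsto x_1^n$, for which $|P|=P$ and hence $\|P\|_r=\|P\|_\infty$. The argument presents no serious obstacle: the only points requiring genuine care are the justification that the regular norm is computed on the positive cone and the correct collapsing of the iterated supremum, both of which hinge on the Banach lattice identity $\|\,|x|\,\|=\|x\|$.
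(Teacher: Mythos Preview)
Your proposal is correct and follows exactly the route the paper intends: the corollary is stated without proof, as an immediate consequence of Theorem~\ref{p: basic}, and your argument fills in precisely the missing details---reducing $\|\,|P|\,\|_\infty$ to a supremum over the positive cone via (\ref{e:Abs}), then collapsing the iterated supremum using $\|\,|y|\,\|=\|y\|$. Your sharpness example is also the paper's, with the appropriate choice of the supremum norm on $\R^2$ to turn the pointwise sharpness of (\ref{e:bound}) into sharpness of the norm inequality.
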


 
\section{Orthogonally additive
	polynomials on $C(K)$}

In this section, we study the  supremum and regular 
norms on the 
spaces of orthogonally additive $n$-homogeneous
polynomials on $C(K)$.

The integral representation for orthogonally additive
$n$-homogeneous polynomials on $C(K)$ allows
us to identify the vector space $\rhpolys{n}{C(K)}$ with 
$\MK $, the space of regular Borel signed measures
on $K$.  The natural norm on $\MK \cong \CK' $ is the dual norm.  This is the 
 \emph{variation norm} for measures:
 $\|\mu\|_1 = |\mu|(K)$.  We shall see that this norm
 corresponds to the regular norm on the spaces of 
 orthogonally additive $n$-homogeneous polynomials.
 However, the supremum norm on $\opoly{n}{K}$ 
 corresponds to a different, but equivalent norm
 on the space of regular Borel signed measures.
 
 The space $\rhpolys{n}{C(K)}$ is a Banach lattice 
 with the regular norm, as is the dual Banach lattice
 $\MK $ with the variation norm.  We shall see that 
 the lattice structures of these two Banach lattices
 are the same.  We note that the lattice structure
 of $\MK $ as the dual of $C(K)$ is the same
 as the lattices structure of $\MK $ considered
 as a sublattice of the lattice of Borel signed
 measures on $K$. In other words, a measure 
 $\mu\in \MK $ is positive, in the sense that
 $\int_K f\,d\mu \ge 0$ for every nonnegative
 $x\in C(K)$, if and only if $\mu(E)\ge 0$
 for every Borel subset $E$ of $K$ \cite[Theorem 2.18]{Rudin}.
 
 \begin{proposition} \label{l:|P|}
 	Let $K$ be a compact Hausdorff topological space and let
 	$P$ be an orthogonally additive $n$-homogeneous
 	polynomial on $C(K)$, given by
 	$$
 	P(x) = \int_K x^n\,d\mu\,.
 	$$
 	Then the absolute value of $P$ is given by
 	$$
 	|P|(x) = \int_K x^n \,d|\mu|\,.
 	$$
 	
 \end{proposition}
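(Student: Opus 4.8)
The statement asserts two inequalities, and the upper bound $|P|(x)\le\int_K x^n\,d|\mu|$ for $x\ge 0$ has in fact already been derived in the course of proving Theorem \ref{p: basic}. The plan is therefore to establish the reverse inequality $|P|(x)\ge\int_K x^n\,d|\mu|$ for each $x\ge 0$; equality on the positive cone then follows, and it extends to all of $C(K)$ because both sides are $n$-homogeneous polynomials in $x$ that agree on the positive cone, which has non-empty interior in $C(K)$. For the lower bound I would work directly from the partition formula (\ref{e:AbsFormula2}), using the symmetric $n$-linear form $A(x_1,\dots,x_n)=\int_K x_1\cdots x_n\,d\mu$ that generates $P$.

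Fix $x\ge 0$ and $\eps>0$, and let $\mu=\mu^+-\mu^-$ be the Jordan decomposition, so that $\mu^+,\mu^-$ are mutually singular regular Borel measures with $|\mu|=\mu^++\mu^-$. The first key step is to separate their masses by a single continuous cut-off. Starting from a Hahn set for $\mu$ and invoking the regularity of $\mu^\pm$, I would produce disjoint compact sets $F_1,F_2\subseteq K$ with $\mu^+(K\setminus F_1)<\eps$ and $\mu^-(K\setminus F_2)<\eps$, and then apply Urysohn's lemma to obtain a continuous $h\colon K\to[0,1]$ with $h\equiv 1$ on $F_1$ and $h\equiv 0$ on $F_2$. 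By construction this gives $\int_K(1-h)\,d\mu^+<\eps$ and $\int_K h\,d\mu^-<\eps$.

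The second step feeds the partition $u_1=xh$, $u_2=x(1-h)$ of $x$ (both positive, with $u_1+u_2=x$) into (\ref{e:AbsFormula2}), taking all $n$ of the partitions equal to $\{u_1,u_2\}$. Since every term of the resulting sum is nonnegative, discarding all but the two ``diagonal'' terms $i_1=\dots=i_n=1$ and $i_1=\dots=i_n=2$ yields
$$
|P|(x)\ \ge\ \Bigl|\int_K (xh)^n\,d\mu\Bigr|+\Bigl|\int_K \bigl(x(1-h)\bigr)^n\,d\mu\Bigr|\,.
$$
I would then estimate each term using the elementary inequalities $0\le h^n\le h$ and $0\le 1-h^n\le n(1-h)$ on $[0,1]$, together with the fact that $(xh)^n$ vanishes on $F_2$ and $\bigl(x(1-h)\bigr)^n$ vanishes on $F_1$. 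This gives $\int_K(xh)^n\,d\mu=\int_K x^n\,d\mu^+ + O(\eps\|x\|_\infty^n)$ and $\int_K\bigl(x(1-h)\bigr)^n\,d\mu=-\int_K x^n\,d\mu^- + O(\eps\|x\|_\infty^n)$, so the two terms are within $O(\eps)$ of $\int_K x^n\,d\mu^+$ and $\int_K x^n\,d\mu^-$ respectively. Summing and letting $\eps\to 0$ produces $|P|(x)\ge\int_K x^n\,d\mu^+ +\int_K x^n\,d\mu^-=\int_K x^n\,d|\mu|$, which combined with the upper bound finishes the argument.

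The step I expect to carry the real content is this separation of $\mu^+$ and $\mu^-$ by the single continuous function $h$: the two diagonal terms must simultaneously recover almost all of $\int_K x^n\,d\mu^+$ and almost all of $\int_K x^n\,d\mu^-$, while the mass that $h$ places on the transition region $\{0<h<1\}$ (and the discarded cross terms) stays negligible. Everything else is routine bookkeeping with the bound $1-t^n\le n(1-t)$. It is worth noting that this argument is uniform in the parity of $n$, so the factor $2$ from Theorem \ref{p: basic}(b) does not appear here: for even $n$ the quantity $\int_K x^n\,d|\mu|$ cannot be reached by a single $y$ with $|y|\le x$, and it is precisely the use of \emph{two} pieces of the partition that closes the gap.
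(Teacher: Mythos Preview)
Your argument is correct, but the paper's proof of the lower bound is shorter and conceptually different. Instead of constructing an explicit two-piece partition via Urysohn's lemma, the paper takes $u^2=\dots=u^n=\{x\}$ to be trivial partitions in (\ref{e:AbsFormula2}) and lets only $u^1$ vary. The sum then collapses to $\sum_i\bigl|\int_K u^1_i\,x^{n-1}\,d\mu\bigr|$, and taking the supremum over partitions $u^1$ of $x$ is exactly the partition form of the Riesz--Kantorovich formula for the linear functional on $C(K)$ represented by the measure $d\lambda=x^{n-1}\,d\mu$; since $d|\lambda|=x^{n-1}\,d|\mu|$, this supremum equals $\int_K x\cdot x^{n-1}\,d|\mu|=\int_K x^n\,d|\mu|$. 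This avoids all the Hahn/Urysohn bookkeeping by reducing immediately to the known $n=1$ case. Your approach, by contrast, is more self-contained---it does not appeal to Riesz--Kantorovich---and makes explicit that a single two-piece partition already suffices to recover the full absolute value, which is a pleasant observation in its own right.
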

 
 \begin{proof}
 	We have seen in the proof of Theorem~\ref{p: basic}
 	that
 	$$
 	|P|(x) \le \int_K x^n \,d|\mu|
 	$$
 	for every $x\ge 0$.
 	
 	To prove the reverse inequality, we start with the 
 	definition of the absolute value:
 	$$
 	|P|(x) = \sup\Bigl\{\, \sum_{i_1,\dots,i_n}
 	|A(u^1_{i_1},\dots,u^n_{i_n})|: u^1,\dots,u^n \in \Pi(x)\Bigr\}
 	$$
 	for $x\ge 0$
 	Taking each of $u^2,\dots,u^n$  to be the 
 	trivial partition $\{x\}$ gives
 	\begin{align*}
 	|P|(x) &\ge 
 	\sup\Bigl\{\, \sum_{i}
 	|A(u^1_{i},x,\dots,x)|: u^1 \in \Pi(x)\Bigr\}\\
 	& = 
 	\sup\Bigl\{\, \sum_{i}
 	\Bigl|\int_K u^1_i x^{n-1}\,d\mu\Bigr|: u^1 \in \Pi(x)\Bigr\} = \int_K x^n\,d|\mu|\,,
 	\end{align*}
 	applying the partition form of the Riesz-Kantorovich
 	formula for the absolute value of a linear
 	functional \cite[Theorem 1.16]{Aliprantis}
 	to the measure $d\lambda = x^{n-1}d\mu$ in $\MK $ and 
 	using 	the fact that $d|\lambda| = x^{n-1}d|\mu|$.

 \end{proof}

 	\begin{theorem}\label{p:isomorphism}
 		Let $K$ be a compact, Hausdorff space.
 	Let $J_n \colon \MK  \to \opoly{n}{C(K)}$
 	be given by
 	$$ 
 	(J_n\mu)(x) = \int_K x^n \,d\mu\,.
 	$$
 	\begin{itemize}
 		\item[(a)] For every $n$, $J_n$ is a
 		Banach lattice isometric
 		isomorphism from $\bigl(\MK ,\|\cdot\|_1\bigr)$ 
 		onto  $\bigl(\opoly{n}{C(K)},\|\cdot\|_r\bigr)$.
 		\item[(b)] If $n$ is odd, then the regular 
 		and supremum norms coincide on $\opoly{n}{C(K)}$
 		and so $J_n$ is an isometric isomorphism
 		for the supremum norm on $\opolyK{n}$.
 		\item[(c)] If $n$ is even, then $J_n$ is an
 		isometric isomorphism for  the norm on $\MK $
 		defined by
 		$$
 		\|\mu\|_0 := \max\{\|\mu^+\|_1,\|\mu^-\|_1\}
 		$$
		and 	the supremum norm
 		on $\opolyK{n}$.
 	\end{itemize}
 \end{theorem}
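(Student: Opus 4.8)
The plan is to verify directly that $J_n$ is well defined, bijective, and lattice preserving, and then to compute the two norms on $\opolyK{n}$ by pushing everything back to the representing measure. First I would check that $J_n\mu$ really lands in $\opoly{n}{C(K)}$: it is the $n$-homogeneous polynomial generated by the symmetric form $A(x_1,\dots,x_n)=\int_K x_1\cdots x_n\,d\mu$, it is bounded by $\|\mu\|_1$, and it is orthogonally additive because disjoint $x,y\in C(K)$ satisfy $x(t)y(t)=0$ pointwise, so $(x+y)^n=x^n+y^n$ under the integral. Linearity in $\mu$ is immediate, surjectivity is exactly Theorem~\ref{Rep}, and injectivity will drop out of the isometry below (alternatively, $J_n\mu=0$ forces $A(x,1,\dots,1)=\int_K x\,d\mu=0$ for all $x$, whence $\mu=0$).

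For part (a) the key input is Proposition~\ref{l:|P|}, which gives $|J_n\mu|(x)=\int_K x^n\,d|\mu| = (J_n|\mu|)(x)$ for every $x\ge 0$. Since two orthogonally additive $n$-homogeneous polynomials that agree on the positive cone of $C(K)$, which has nonempty interior, must agree everywhere, I conclude $|J_n\mu|=J_n|\mu|$, i.e. $J_n$ commutes with absolute values. A linear bijection that preserves absolute value preserves all the lattice operations, so $J_n$ is a lattice isomorphism. The regular norm is then computed as $\|J_n\mu\|_r=\||J_n\mu|\|_\infty=\|J_n|\mu|\|_\infty=\sup_{\|x\|_\infty\le 1}\bigl|\int_K x^n\,d|\mu|\bigr|$; the pointwise bound $|x^n|\le 1$ gives $\le |\mu|(K)=\|\mu\|_1$, and evaluating at the constant function $x\equiv 1$ attains it, so $\|J_n\mu\|_r=\|\mu\|_1$ and $J_n$ is a Banach lattice isometric isomorphism. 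Part (b) is then immediate: by Corollary~\ref{c: basic} the regular and supremum norms coincide for odd $n$, so the isometry statement of (a) transfers verbatim to the supremum norm.

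The substance is in part (c), where $n$ is even. Here I would compute $\|J_n\mu\|_\infty=\sup_{\|x\|_\infty\le 1}\bigl|\int_K x^n\,d\mu\bigr|$ by observing that, since $n$ is even, the map $x\mapsto x^n$ sends the unit ball of $C(K)$ exactly onto the order interval $\{g\in C(K):0\le g\le 1\}$: indeed $0\le x^n\le 1$ when $\|x\|_\infty\le 1$, and conversely any such $g$ equals $x^n$ for $x=g^{1/n}\in C(K)$ with $\|x\|_\infty\le 1$. Hence $\|J_n\mu\|_\infty=\sup\{|\int_K g\,d\mu|:0\le g\le 1\}$. Writing $\mu=\mu^+-\mu^-$ gives the two-sided bound $-\|\mu^-\|_1\le \int_K g\,d\mu\le\|\mu^+\|_1$ for $0\le g\le 1$, so the supremum is at most $\max\{\|\mu^+\|_1,\|\mu^-\|_1\}=\|\mu\|_0$.

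For the reverse inequality I would use the Hahn decomposition $K=K^+\cup K^-$ together with regularity of $\mu$: choosing continuous $g$ with $0\le g\le 1$ that approximate the indicator of $K^+$ drives $\int_K g\,d\mu$ up to $\mu^+(K)=\|\mu^+\|_1$, and approximating the indicator of $K^-$ drives it down to $-\|\mu^-\|_1$; taking the larger of the two gives $\|J_n\mu\|_\infty\ge\|\mu\|_0$. Thus $\|J_n\mu\|_\infty=\|\mu\|_0$, and combined with the bijectivity from (a) this is the claimed isometric isomorphism. The one genuinely delicate step is this last approximation: I expect the main obstacle to be justifying, via regularity of the signed measure and Urysohn's lemma, that continuous functions $0\le g\le 1$ approximate the Hahn-decomposition indicators closely enough in $L^1(|\mu|)$ to make $\int_K g\,d\mu$ converge to the respective extreme values. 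Everything else reduces to the already-proven Proposition~\ref{l:|P|} and Corollary~\ref{c: basic}.
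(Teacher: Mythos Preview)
Your proposal is correct and follows essentially the same route as the paper: surjectivity from Theorem~\ref{Rep}, injectivity via $A(x,1,\dots,1)=\int_K x\,d\mu$, the lattice isomorphism and regular-norm isometry from Proposition~\ref{l:|P|}, part (b) from Corollary~\ref{c: basic}, and the hard direction of (c) via the Hahn decomposition together with Urysohn/regularity approximation of the indicator of the positive set. Two minor presentational differences: for (a) the paper verifies the lattice isomorphism by checking that both $J_n$ and $J_n^{-1}$ are positive rather than by your $|J_n\mu|=J_n|\mu|$ argument, and for the upper bound in (c) your observation that, for even $n$, $\{x^n:\|x\|_\infty\le 1\}=\{g\in C(K):0\le g\le 1\}$ is a clean reformulation that the paper replaces with the direct inequality $|a-b|\le\max\{a,b\}$ for $a,b\ge 0$ applied to $\int x^n\,d\mu^+$ and $\int x^n\,d\mu^-$.
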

 
 \begin{proof}
 Cleary, $J_n$ is linear and surjective.  To see that it is 
 injective, suppose that the $n$-homogeneous 
 polynomial $P(x) = \int_K x^n \,d\mu$ is zero.
 The associated symmetric $n$-linear form
 is
 $$
 A(x_1,\dots,x_n) = \int_K x_1\dots x_n\,d\mu
 $$
and so $A(x_1,\dots,x_n)=0$ for all $x_1,\dots,x_n \in
C(K)$.  Taking $x_2= \dots = x_n = 1$, we have
$\int_K x\,d\mu=0$ for every $x\in C(K)$ and so $\mu=0$.

(a)  Clearly, $J_n$ is positive.  If we show that $J_n^{-1}$ 
is also positive, then it will follow that $J_n$ is a 
lattice homomorphism \cite[Theorem 7.3]{Aliprantis}.
Let $P=\widehat{A}$ be a positive element of $\rhpolys{n}{C(K)}$, with $\mu\in \MK $ satisfying
$J_n\mu = P$.  Then, for every nonnegative $x\in C(K)$,
we have $\int_K x\,d\mu = A(x,1,\dots,1) \ge 0$ and 
so $\mu$ is positive.  Therefore $J_n$ is a lattice
isomorphism for every $n$.

By Proposition \ref{l:|P|}, 
the regular norm of $P=J_n\mu$ is $\| P\|_r =
\|\,|P|\,\|_\infty = 
|P|(1) = |\mu|(K) = \|\mu\|_1$, since $|P|$ is 
increasing on the positive cone of $C(K)$.
Therefore $J_n$ is both a lattice isomorphism 
and an isometry.

(b) This has already been proved in Corollary \ref{c: basic}.

(c) Let  $\mu \in \MK $ and let $P= J_n\mu$.
It follows from (a) that $P^+= J_n \mu^+$
and $P^- = J_n \mu^-$. We have 
$$
P(x) = \int_K x^n \,d\mu^+ - \int_K x^n d\mu^-
$$
for every $x\in \CK$.  As $|a-b| \le \max\{|a|,|b|\}$
for $a,b\in \R^+$ and $n$ is even, it follows that 
$\|P\|_\infty \le \max\{\|\mu^+\|_1, \|\mu^-\|_1 \}$.

Now let $\{A,B\}$ be a Hahn decomposition of $\mu$,
with $\mu$ positive on $A$ and negative on $B$.
If $F \subset A$ is compact, then by a standard argument
using Urysohn's lemma 
(see, for example, \cite[Theorem 12.41]{Hewitt})
there is a decreasing sequence $(x_k)$ of continuous
functions on $K$ with values in $[0,1]$ that converges
almost everywhere with respect to $|\mu|$ to
$1_F$, the characteristic function of $F$.
Then, by the bounded convergence theorem,
$$
\|P\|_\infty \ge \lim_{k\to \infty}
\biggl| \int_K x_k^n\,d\mu\biggr|
= \biggl| \int_K 1_F \,d\mu \biggr|
= \mu^+(F)\,.
$$
It follows from the regularity of $\mu^+$
that $\|P\|_\infty \ge \mu^+(A) = \|\mu^+\|_1$. 
Similarly, $\|P\|_\infty \ge \|\mu^-\|_1$.
Therefore $\|P\|_\infty = \|\mu\|_0$.
\end{proof}

We summarize the identifications of the various norms,
bearing in mind that the supremum and regular norms
coincide for positive polynomials.
\begin{corollary}
	Let $P$ be an orthogonally additive $n$-homogeneous
	polynomial on $C(K)$, with corresponding
	measure $\mu\in \MK $. Then
	\begin{itemize}
		\item[(a)]  $\|P\|_r  = 
		\|P^+\|_r + \|P^-\|_r= \|\mu^+\|_1 + \|\mu^-\|_1 = \|\mu\|_1$.
		\item[(b)] If $n$ is odd, then $\|P\|_\infty = \|P\|_r$.
		\item[(c)] If $n$ is even, then  $\|P\|_\infty =
		\max\{ \|P^+\|_r, \|P^-\|_r \} =
		\max\{\|\mu^+\|_1, \|\mu^-\|_1\} = \|\mu\|_0$.
	\end{itemize}
\end{corollary}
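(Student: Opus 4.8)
The plan is to assemble this corollary directly from the isometric identifications established in Theorem~\ref{p:isomorphism}, since all of the substantive work---in particular the sharp factor of $2$ and the Hahn-decomposition argument via Urysohn's lemma---has already been carried out there. The one structural fact I would lean on throughout is that $J_n$ is a \emph{lattice} isomorphism, proved in part~(a) of that theorem: because $J_n$ and $J_n^{-1}$ are both positive, the positive and negative parts are preserved, so that $P^+=J_n\mu^+$ and $P^-=J_n\mu^-$ for $P=J_n\mu$.

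For part~(a), I would first invoke Theorem~\ref{p:isomorphism}(a) to get $\|P\|_r=\|\mu\|_1$. Applying the same isometry to the positive and negative parts, together with the identifications $P^{\pm}=J_n\mu^{\pm}$ noted above, yields $\|P^+\|_r=\|\mu^+\|_1$ and $\|P^-\|_r=\|\mu^-\|_1$. The chain is then closed by the additivity of the variation norm on the Jordan decomposition, recorded in the Preliminaries, namely $\|\mu\|_1=|\mu|(K)=\mu^+(K)+\mu^-(K)=\|\mu^+\|_1+\|\mu^-\|_1$. Reading these equalities together gives $\|P\|_r=\|P^+\|_r+\|P^-\|_r=\|\mu^+\|_1+\|\mu^-\|_1=\|\mu\|_1$.

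Part~(b) is immediate: when $n$ is odd, Corollary~\ref{c: basic} (equivalently Theorem~\ref{p:isomorphism}(b)) gives $\|P\|_\infty=\|P\|_r$. For part~(c), with $n$ even, Theorem~\ref{p:isomorphism}(c) identifies the supremum norm with $\|\mu\|_0=\max\{\|\mu^+\|_1,\|\mu^-\|_1\}$, so $\|P\|_\infty=\max\{\|\mu^+\|_1,\|\mu^-\|_1\}$. Substituting the identities $\|\mu^{\pm}\|_1=\|P^{\pm}\|_r$ from part~(a) rewrites the right-hand side as $\max\{\|P^+\|_r,\|P^-\|_r\}$, completing the string of equalities.

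Since every ingredient is already in hand, there is no genuine obstacle here; the corollary is a bookkeeping consolidation of the isometric dictionary between $\MK$ and $\opoly{n}{\CK}$. The only point requiring a moment's care is the passage $P^{\pm}=J_n\mu^{\pm}$: it relies specifically on $J_n$ being a lattice isomorphism rather than merely an isometric linear bijection, and this is precisely what Theorem~\ref{p:isomorphism}(a) supplies. Once that is acknowledged, parts~(a)--(c) follow by reading off the norms.
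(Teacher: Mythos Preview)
Your proposal is correct and matches the paper's own treatment: the corollary is stated there without a separate proof, as a direct summary of Theorem~\ref{p:isomorphism}, and the identification $P^{\pm}=J_n\mu^{\pm}$ you single out is exactly the observation the paper records inside the proof of Theorem~\ref{p:isomorphism}(c).
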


 We note that the norm $\|\cdot\|_0$ is easily seen
 to be equivalent to the dual (variation) norm on
 $\MK $.  In fact, we have 
 $$ \|\mu\|_0 \le \|\mu\|_1 \le 2\,\|\mu\|_0
 $$ 
 for every
 $\mu\in \MK $.

It will be useful to have an alternative expression
for the norm $\|\cdot\|_0$ on $\MK$.  Using the identity
$\max\{a,b\} = \frac{1}{2} \bigl(|a+b| + |a-b|\bigr)$
for non-negative real numbers, we have
\begin{align*}
\|\mu\|_0 & = 
\frac{1}{2}\bigl( \bigl|\|\mu^+\|_1+
\| \mu^-\|_1\bigr| +
\bigl| \|\mu^+\|_1 - \|\mu^-\|_1  \bigr|
\bigr)   \\
 &= \frac{1}{2}\bigl(  \|\mu\|_1 + 
 \bigl| \mu^+(K)-\mu^-(K)\bigr| \bigr)
 = \frac{1}{2}\bigl( \|\mu\|_1 + \bigl| \mu(K)\bigl| \bigr)
 \label{e:mu0}
\end{align*}
 
 Thus, we have
 \begin{equation} \label{e:mu0}
 \|\mu\|_0 = \max\bigl\{\|\mu^+\|_1, \|\mu^-\|_1\bigr\}
 = \frac{1}{2}\Bigl( \|\mu\|_1 + \bigl| \mu(K)\bigl| \Bigr)
 \end{equation}
 These results clarify the geometric properties of the spaces $\opolyK{n}$; for the regular norm, these
 spaces are all essentially the same as the dual space
 $\MK $ with the variation norm.  
 The case of $\opolyK{n}$ with the supremum norm and $n$
 even is substantially different.  To understand this,
 we must study the extreme point structure of the
 unit ball of $\MK $ for the norm $\|\cdot \|_0$.

 
\section{Extreme points in  $\opoly{n}{C(K)}$}

In this section, we study the extreme points of the 
unit ball of the space $\opoly{n}{C(K)}$.  We begin
with the regular norm.  We have seen in Proposition
\ref{p:isomorphism} that there is an isometric
isomorphism
$$
\bigl(\opoly{n}{C(K)}, \| \cdot\|_r\bigr)
\cong \bigl(\MK , \|\cdot\|_1\bigr)
$$
where $\|\cdot\|_1$ denotes the variation norm
on $\MK $, the space of regular Borel signed 
measures on $K$.  Furthermore, when the degree
$n$ is odd, the supremum and regular norms
on $\opoly{n}{C(K)}$ coincide.

It is a classical result that the extreme
points of the unit ball of $\MK $ for 
the variation norm are the measures of the 
form $\pm \delta_t$, where $t\in K$ 
(see, for example, \cite[V.8.6]{DS}).
The isomorphism between $\opoly{n}{C(K)}$
and $\MK $ associates the polynomial
$P(x)= x(t)^n$ with the measure $\delta_t$.
Thus, we have

\begin{proposition} \label{p:extremeRegular}
	Let $K$ be a compact Hausdorff topological space.
	The extreme points of the closed unit
	ball of the space 
	$\bigl(\opoly{n}{C(K)},\|\cdot\|_r\bigr)$ are 
	the $n$-homogeneous polynomials $\pm \delta_t^n$,
	where $t\in K$ and $\delta_t^n(x)= x(t)^n$.
\end{proposition}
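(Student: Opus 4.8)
The plan is to transfer the entire question through the isometric isomorphism $J_n$ supplied by Theorem~\ref{p:isomorphism}(a) and then to invoke the classical description of the extreme points of the variation-norm unit ball of $\MK$.

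The one general fact I would use is that a linear isometric isomorphism $T\colon X\to Y$ of normed spaces carries the closed unit ball of $X$ bijectively onto that of $Y$ and restricts to a bijection between their sets of extreme points. This is routine: both $T$ and $T^{-1}$ are linear and norm-preserving, so any convex decomposition $y=\frac12(y_1+y_2)$ of a unit-ball point in one space transports to a convex decomposition of $Ty$ in the other, and extremality is therefore preserved in both directions. Applying this with $X=(\MK,\|\cdot\|_1)$, $Y=(\opoly{n}{C(K)},\|\cdot\|_r)$ and $T=J_n$, the extreme points of the unit ball of $(\opoly{n}{C(K)},\|\cdot\|_r)$ are precisely the images under $J_n$ of the extreme points of the variation-norm unit ball of $\MK$. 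By the classical result cited above (see \cite[V.8.6]{DS}), these latter extreme points are exactly the Dirac measures $\pm\delta_t$, $t\in K$.

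It then remains only to identify the images $J_n(\pm\delta_t)$. For $t\in K$ and $x\in C(K)$ we compute
$$
(J_n\delta_t)(x)=\int_K x^n\,d\delta_t = x(t)^n = \delta_t^n(x),
$$
so $J_n(\pm\delta_t)=\pm\delta_t^n$, which gives the asserted list. There is no genuine obstacle in this argument; the only point deserving care is that the isometry transports extremality in \emph{both} directions, which is precisely what guarantees that $\{\pm\delta_t^n : t\in K\}$ is the complete set of extreme points rather than merely a subset of it.
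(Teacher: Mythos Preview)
Your argument is correct and is essentially identical to the paper's: the proposition is stated as an immediate consequence of the isometric isomorphism $J_n$ of Theorem~\ref{p:isomorphism}(a) together with the classical identification of the extreme points of the variation-norm unit ball of $\MK$ as $\{\pm\delta_t:t\in K\}$. Your explicit verification that $J_n(\pm\delta_t)=\pm\delta_t^n$ and your remark that isometries transport extremality in both directions merely spell out what the paper leaves implicit.
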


This result is given in \cite{CLZ1} for 
the supremum norm, but the proof given there
is not valid for polynomials of even degree.  
However, this does not affect the results that 
follow in \cite{CLZ1}. 
In particular, their elegant proof 
of the integral representation still stands. 
Essentially, all that is required
for
their arguments to work is that $\opolyK{n}$ is a dual space and that the extreme points of the unit ball
are as described above.

We now turn to the geometry of $\opolyK{n}$
for the supremum norm, where the degree $n$
is even.  We have the isometric
isomorphism
\begin{equation*} \label{e: iso}
\bigl(\opoly{n}{C(K)}, \|\cdot\|_\infty\bigr)
\cong \bigl(\MK , \|\cdot\|_0\bigr)
\end{equation*}
where $\|\mu\|_0 = \max\{\|\mu^+\|_1, \|\mu^-\|_1\}$.
We will show that $\|\cdot\|_0$ is the dual 
of a norm on $\CK$ that is equivalent to 
the supremum norm.

The norm we seek is related to the 
\emph{diameter seminorm} on $\CK$, which is defined by
$$
\rho(x) = \diam(x) = \sup\{|x(s)-x(t)|: s,t\in K\}\,.
$$
It is easy to see that we also have
$$
\rho(x) = 2\,\inf\bigl\{\|x-\alpha 1_K\|_\infty: 
\alpha\in \R\bigr\}
$$
The kernel of $\rho$ is the one dimensional 
subspace
 of constant functions.
 As in \cite{Cabello}, we use 
 $C_\rho(K)$ to denote the quotient
 space $\CK/ \ker\rho$. It is a Banach
 space under the norm
 $$
 \|\pi(x)\|_\rho = \rho(x)
 $$
 where $\pi\colon \CK \to \CK/\ker \rho$
 is the quotient map.
 Following Cabello-Sanchez \cite{Cabello}, we note
 that this means that
  $\bigl(C_\rho(K),\|\cdot\|_\rho\bigr)$
  is isometrically isomorphic, up to a a constant
  factor $2$, to the quotient space of
  $\bigl(\CK, \|\cdot\|_\infty\bigr)$
  by the subspace of constant functions.
  Therefore the dual space $\bigl(C_\rho(K),\|\cdot\|_\rho\bigr)'$ 
  is isometrically isomorphic, up to a 
  constant factor $1/2$, to a subspace
  of $\bigl(\CK, \|\cdot\|_\infty\bigr)'$,
  the space of regular Borel signed measures
  with the variation norm.
  This subspace is the space of measures $\mu$
  satisfying $\mu(K)=0$ and on it we have \cite{Cabello}
  $$
  \|\mu\|_1 = 2\|\mu\|_{\bigl(C_\rho(K),\|\cdot\|_\rho\bigr)'}
  \,.
  $$
  \begin{theorem}[Cabello-Sanchez \cite{Cabello}]
  	\label{p:Cabello}
  	Let $K$ be a compact Hausdorff topological space.
  	A regular Borel signed measure $\mu$ is an 
  	extreme point of the unit ball of the dual
  	space 
  	$\bigl(C_\rho(K),\|\cdot\|_\rho\bigr)'$
  	if and only if 
  	$\mu = \delta_s - \delta_t$,
  	where $s$ and $t$ are distinct points of $K$.
  \end{theorem}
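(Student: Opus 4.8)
The plan is to transport the problem to $\MK$ using the identification recorded just above the statement: a functional on $C_\rho(K)$ corresponds to a regular Borel signed measure $\mu$ with $\mu(K)=0$, and its dual norm equals $\tfrac12\|\mu\|_1$. Consequently the closed unit ball of $\bigl(C_\rho(K),\|\cdot\|_\rho\bigr)'$ is carried isometrically onto
\[
B = \bigl\{\mu\in\MK : \mu(K)=0,\ \|\mu\|_1 \le 2\bigr\},
\]
and since an isometric isomorphism preserves extreme points of the unit ball, it suffices to identify the extreme points of $B$. I would first observe that every extreme point of $B$ lies on the sphere $\|\mu\|_1=2$: if $\|\mu\|_1<2$ with $\mu(K)=0$, choose any nonzero $\nu$ with $\nu(K)=0$ and $\eps>0$ small enough that $\|\mu\pm\eps\nu\|_1\le 2$; then $\mu=\tfrac12\bigl((\mu+\eps\nu)+(\mu-\eps\nu)\bigr)$ is a nontrivial average inside $B$. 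For $\mu\in B$ with $\|\mu\|_1=2$, the relations $\mu^+(K)-\mu^-(K)=\mu(K)=0$ and $\mu^+(K)+\mu^-(K)=\|\mu\|_1=2$ force $\mu=P-N$, where $P=\mu^+$ and $N=\mu^-$ are mutually singular probability measures.

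I would then show that such a $\mu$ can be extreme only when $P$ and $N$ are Dirac measures. Suppose, say, $P$ is not a Dirac measure; then there is a Borel set $E$ contained in a support set $S$ of $P$ (with $S$ disjoint from the support of $N$) such that $p:=P(E)\in(0,1)$. Setting $P_1=\tfrac1p\,P|_E$ and $P_2=\tfrac1{1-p}\,P|_{S\setminus E}$ produces two distinct probability measures, each singular to $N$, with $P=pP_1+(1-p)P_2$. The measures $\nu_i=P_i-N$ then satisfy $\nu_i(K)=0$ and, by singularity, $\|\nu_i\|_1=P_i(K)+N(K)=2$, so $\nu_1,\nu_2\in B$; moreover $\mu=p\nu_1+(1-p)\nu_2$ with $\nu_1\neq\nu_2$, contradicting extremeness. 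Hence $P=\delta_s$ and $N=\delta_t$, and mutual singularity forces $s\neq t$.

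Finally I would verify that every $\delta_s-\delta_t$ with $s\neq t$ is genuinely extreme. If $\delta_s-\delta_t=\tfrac12(\nu_1+\nu_2)$ with $\nu_1,\nu_2\in B$, then convexity of $\|\cdot\|_1$ together with $\|\delta_s-\delta_t\|_1=2$ forces $\|\nu_1\|_1=\|\nu_2\|_1=2$, so each $\nu_i=P_i-N_i$ with $P_i,N_i$ mutually singular probability measures. The decomposition $\delta_s-\delta_t=\tfrac12(P_1+P_2)-\tfrac12(N_1+N_2)$ then writes $\delta_s-\delta_t$ as a difference of positive measures, and the decisive tool is that whenever a signed measure is expressed as $A-B$ with $A,B\ge 0$, its Jordan parts satisfy $(\,\cdot\,)^+\le A$ and $(\,\cdot\,)^-\le B$; this yields $\tfrac12(P_1+P_2)\ge\delta_s$ and $\tfrac12(N_1+N_2)\ge\delta_t$. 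Since both sides have total mass $1$, the inequalities are equalities, whence $P_1=P_2=\delta_s$ and $N_1=N_2=\delta_t$, i.e.\ $\nu_1=\nu_2=\delta_s-\delta_t$. I expect the main obstacle to be the bookkeeping of the support and singularity constraints needed to keep the perturbations inside the affine slice $B$; the domination of Jordan parts is what ultimately pins down the averaged positive and negative parts and closes the argument.
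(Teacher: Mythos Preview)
The paper does not actually supply its own proof of this theorem; it is quoted from Cabello-Sanchez \cite{Cabello} and used as a black box in Step~3 of Theorem~\ref{p:extremeMK}. So there is no proof in the paper against which to compare your argument.

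That said, your argument is correct and gives a clean, self-contained proof. The identification of the dual unit ball with $B=\{\mu\in\MK:\mu(K)=0,\ \|\mu\|_1\le 2\}$ follows directly from the remarks preceding the statement. The reduction to the sphere $\|\mu\|_1=2$ and the decomposition $\mu=P-N$ into mutually singular probabilities are routine. Your splitting argument when $P$ (or $N$) is not a Dirac mass is exactly the standard mechanism for showing that non-atomic pieces destroy extremality, and the key point---that the perturbed measures $\nu_i=P_i-N$ remain in $B$ because singularity guarantees $\|\nu_i\|_1=2$---is handled correctly. For the converse, the inequality $\mu^+\le A$, $\mu^-\le B$ whenever $\mu=A-B$ with $A,B\ge 0$ is precisely the minimality of the Jordan decomposition, and combined with the mass constraint it pins down $P_1=P_2=\delta_s$, $N_1=N_2=\delta_t$ as you say (this last step implicitly uses that a Dirac measure is extreme among probability measures, which is immediate). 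One trivial caveat: your Step~1 perturbation needs a nonzero $\nu$ with $\nu(K)=0$, which requires $|K|\ge 2$; but when $K$ is a singleton $C_\rho(K)=\{0\}$ and the statement is vacuous, so this is harmless.
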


In order to apply this result, we first need to 
identify the predual of the norm $\|\cdot\|_1$
 on $\MK \cong \CK'$.
 \begin{theorem}\label{p: predual}
 	Let $K$ be a compact Hausdorff topological space. 
 	Let $\normcdot$
 	be the norm on $C(K)$ defined by
 	\begin{equation}\label{e:predual}
 	\normc{x} := \|x^+\|_\infty + \|x^-\|_\infty 
 	= \max\bigl\{\|x\|_\infty,\rho(x) \bigr\}
 	\end{equation}
 	where $\rho$ is the diameter seminorm.
 	Then the dual space of 
 	$\bigl(C(K), \normcdot\bigr)$
 	is isometrically isomorphic to the space
 	of regular Borel signed measures on $K$
 	with the norm $\|\mu\|_0
 	= \max\{\|\mu^+\|_1, \|\mu^-\|_1\}$.
 	
 \end{theorem}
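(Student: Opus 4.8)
The plan is to identify the dual space set-theoretically first and then compute the dual norm. Since $\rho(x)\le 2\|x\|_\infty$, the definition gives $\|x\|_\infty \le \normc{x}\le 2\|x\|_\infty$, so $\normcdot$ is equivalent to the supremum norm; hence by the Riesz representation theorem the dual of $\bigl(C(K),\normcdot\bigr)$ consists of exactly the functionals $x\mapsto \int_K x\,d\mu$ with $\mu\in\MK$, and the only thing left to prove is that the associated dual norm
\[
\|\mu\|_* := \sup\Bigl\{\Bigl|\int_K x\,d\mu\Bigr| : \normc{x}\le 1\Bigr\}
\]
equals $\|\mu\|_0$. Before that I would record the elementary identity $\|x^+\|_\infty+\|x^-\|_\infty=\max\{\|x\|_\infty,\rho(x)\}$ asserted in the statement, which follows from a short case analysis according to the signs of $M=\max_t x(t)$ and $m=\min_t x(t)$, using $\rho(x)=M-m$ and $\|x\|_\infty=\max\{|M|,|m|\}$.

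For the inequality $\|\mu\|_*\le\|\mu\|_0$, I would fix $x$ with $\normc{x}\le 1$ and set $a=\|x^+\|_\infty$, $b=\|x^-\|_\infty$, so that $a+b\le 1$ and $-b\le x(t)\le a$ for every $t$. Splitting $\int_K x\,d\mu=\int_K x\,d\mu^+-\int_K x\,d\mu^-$ and using positivity of $\mu^\pm$ together with the pointwise bounds on $x$ yields $-b\|\mu^+\|_1-a\|\mu^-\|_1\le \int_K x\,d\mu\le a\|\mu^+\|_1+b\|\mu^-\|_1$. Since $a,b\ge 0$ and $a+b\le 1$, each of these two outer expressions has absolute value at most $(a+b)\max\{\|\mu^+\|_1,\|\mu^-\|_1\}\le\|\mu\|_0$, whence $\bigl|\int_K x\,d\mu\bigr|\le\|\mu\|_0$. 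This is precisely the step where the $\max$ in the definition of $\|\cdot\|_0$ is forced out of the separate positivity of $\mu^+$ and $\mu^-$.

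For the reverse inequality $\|\mu\|_*\ge\|\mu\|_0$, I would assume without loss of generality that $\|\mu^+\|_1\ge\|\mu^-\|_1$, so that $\|\mu\|_0=\mu^+(K)=\mu(A)$ for a Hahn set $A$ (with $\mu\ge0$ on $A$ and $\mu\le0$ on $B=K\setminus A$). Given $\eps>0$, the regularity of $|\mu|$ supplies a compact $F\subseteq A$ and an open $U\supseteq A$ with $|\mu|(U\setminus F)<\eps$, and Urysohn's lemma supplies a continuous $x\colon K\to[0,1]$ equal to $1$ on $F$ and to $0$ off $U$. Because $0\le x\le 1$ we have $\normc{x}=\|x\|_\infty\le 1$, and since $x$ agrees with $1_A$ off $U\setminus F$, we get $\bigl|\int_K x\,d\mu-\mu(A)\bigr|\le\int_K|x-1_A|\,d|\mu|\le|\mu|(U\setminus F)<\eps$. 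Hence $\int_K x\,d\mu>\mu(A)-\eps=\|\mu\|_0-\eps$, and letting $\eps\to0$ gives $\|\mu\|_*\ge\|\mu\|_0$. Combining the two estimates, $\|\mu\|_*=\|\mu\|_0$, so the Riesz map is the desired isometric isomorphism.

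The hard part will be the lower bound: one must approximate the discontinuous characteristic function of the Hahn set $A$ by a continuous test function controlled in $|\mu|$-measure rather than in supremum norm, so that the integral lands exactly at $\mu(A)=\|\mu\|_0$ and not at some strictly smaller value. The admissibility constraint $\normc{x}\le 1$ is mild here precisely because the approximant can be taken nonnegative, on which $\normcdot$ reduces to $\|\cdot\|_\infty$; the delicate input is the regularity-plus-Urysohn construction that keeps $|\mu|(U\setminus F)$ small.
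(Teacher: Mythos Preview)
Your proof is correct and follows essentially the same route as the paper: first observe that $\normcdot$ is equivalent to $\|\cdot\|_\infty$ so the dual is $\MK$, then compute the dual norm by an upper bound via the Jordan decomposition of $\mu$ and a lower bound via Hahn decomposition, regularity, and Urysohn. Your presentation of the upper bound using the pointwise inequalities $-b\le x\le a$ against $\mu^\pm$ is a minor streamlining of the paper's four-term splitting, and for the lower bound you approximate the Hahn set $A$ from inside and outside rather than approximating $A$ and $B$ separately from inside, but these are cosmetic variations of the same argument.
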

\begin{proof}
	A routine calculation shows that  the formula 
	$\normc{x} = \|x^+\|_\infty + \|x^-\|_\infty$
	defines a norm on $\CK$.  To establish the second
	equality in (\ref{e:predual}), we consider two cases.
	\begin{enumerate}
		\item[(a)] Suppose the function $x$ has constant
		sign. Then $\rho(x) \le \|x\|_\infty$  
		and one of $\|x^+\|_\infty$, 
		$\|x^-\|_\infty$ is zero. Therefore $\normc{x}
		=\|x\|_\infty$.
		\item[(b)] If $x$ changes sign, then
		$\|x\|_\infty \le \|x^+\|_\infty + \|x^-\|_\infty = \rho(x)$. 
	\end{enumerate}
Therefore $\normc{x}  	= 
\max\bigl\{\|x\|_\infty,\rho(x) \bigr\}$
for every $x\in \CK$.

Let us denote  the dual norm of $\|\cdot\|_1$ by $\|\cdot\|_1'$.  If $x\in \CK$ and $\mu\in\MK$,
then
$$
\int_K x\,d\mu = 
\Bigl( \int_K x^+\,d\mu^+ + \int_K x^-\,d\mu^-\Bigr)
-\Bigl(\int_K x^+\,d\mu^- + \int_K x^-\,d\mu^+\Bigr)\,.
$$
Now
$$
0\le \int_K x^+\,d\mu^+ + \int_K x^-\,d\mu^-
\le \|x^+\|_\infty \|\mu^+\|_1 + \|x^-\|_\infty \|\mu^-\|_1
\le \normc{x} \|\mu\|_0
$$
and similarly
$$
0\le \int_K x^+\,d\mu^- + \int_K x^-\,d\mu^+
\le \normc{x} \|\mu\|_0 \,.
$$
Therefore
$$
\Bigl| \int_K x\,d\mu \Bigr| \le \normc{x} \|\mu\|_0 
$$
and so  $\|\mu\|_1' \le \|\mu\|_0$.

Fix $\mu\in \MK$ and let $\varepsilon>0$.
Let $\{A,B\}$ be a Hahn decomposition for $\mu$,
where $A$ is a positive set and $B$ a negative set.
Since $\mu$ is regular, there exist compact sets
$C\subset A$ and $D\subset B$ such that
$|\mu|(A\setminus C),  |\mu|(B\setminus D) 
<\varepsilon$.
By Urysohn's lemma, there is a continuous function
$y\colon K \to [0,1]$ that takes the values $1$ and $0$
on the sets $C$ and $D$ respectively. 
Then $\|y\|_1 = 1$ and 
$$
\int_K y\,d\mu = \int_C y\,d\mu
+ \int_{A\setminus C} y\,d\mu
+ \int_{B\setminus D} y\,d\mu
$$
It follows that 
$$
\Bigl|\int_K y\,d\mu\Bigl| \ge
\mu^+(C)-2\varepsilon 
\ge \mu^+(A) -3\varepsilon = \|\mu^+\|_1 -3\varepsilon\,.
$$
Similarly, 
$$
\Bigl|\int_K y\,d\mu\Bigr| \ge \|\mu^-\|_1 - 3\varepsilon
$$
Thus, $\|\mu\|_1' \ge \|\mu\|_0 - 3\varepsilon$
for every $\varepsilon>0$

Therefore $\|\mu\|_1' = \|\mu\|_0$
for every $\mu\in \MK$.
\end{proof}

\subsection{The extreme points of 
the unit ball of $\bigl(\CK,\normcdot\bigr)$}

The extreme points of the closed unit ball of 
$\CK$ with the supremum norm are the 
constant functions $\pm 1$.  Our next result
shows that changing to the equivalent norm
given in the preceding proposition 
leads to a different set of extreme points.

\begin{theorem}
	A function $x$ is an extreme point of the closed unit
	ball of $(C(K),\normcdot)$ if and only if
	either\\
	(i) $x(t)= 1$ or $0$ for every $t\in K$,
	or\\
	(ii) $x(t)= -1$ or $0$ for every $t\in K$\\ 
	(and $\{t: x(t)\neq 0 \}\neq \varnothing$ in each case.)
\end{theorem}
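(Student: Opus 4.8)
The plan is to work directly from the definition of an extreme point, exploiting the reformulation $\normc{x}=\max\{\|x\|_\infty,\rho(x)\}$ established in Theorem~\ref{p: predual}. Two preliminary observations organise everything. First, any extreme point $x$ of the unit ball must satisfy $\normc{x}=1$, since a point of norm strictly less than $1$ lies in the interior of the ball and is trivially a midpoint of two distinct ball elements. Second, the map $x\mapsto -x$ is a linear isometry of $(\CK,\normcdot)$, because $\|(-x)^+\|_\infty+\|(-x)^-\|_\infty=\|x^-\|_\infty+\|x^+\|_\infty$; hence it carries extreme points to extreme points, and once the sign-changing case is disposed of it suffices to analyse $x\ge 0$. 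I would therefore partition every candidate into three cases: $x$ attains both a strictly positive and a strictly negative value; $x\ge 0$; and $x\le 0$.

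For necessity I first rule out sign-changing functions. If $\max_K x=M>0$ and $\min_K x=m<0$, then $\normc{x}=\rho(x)=M-m=1$, and for any $c$ with $0<c<\min\{M,-m\}$ the functions $x\pm c\,1_K$ still change sign, so their diameter — and hence their $\normcdot$-norm — again equals $M-m$. Thus $x=\frac{1}{2}\bigl((x+c1_K)+(x-c1_K)\bigr)$ displays $x$ as a nontrivial midpoint, so $x$ is not extreme. It then remains to treat $x\ge 0$, where $\normc{x}=\|x\|_\infty=1$, so $x(K)\subseteq[0,1]$. Here I claim that if $x$ takes any value in the open interval $(0,1)$ then $x$ is not extreme: on the nonempty open set $V=\{0<x<1\}$ one has $\min\{x,1-x\}>0$, so Urysohn's lemma provides a nonzero continuous $h$ with compact support in $V$ and $|h|\le\min\{x,1-x\}$; then $0\le x\pm h\le1$, so the nonnegative functions $x\pm h$ have $\normcdot$-norm at most $1$ and $x=\frac{1}{2}\bigl((x+h)+(x-h)\bigr)$ is again a nontrivial convex combination. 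Consequently an extreme $x\ge 0$ omits all of $(0,1)$, and since $x(K)\subseteq[0,1]$ with the value $1$ attained, this forces $x(K)\subseteq\{0,1\}$, which is case (i). The case $x\le 0$ gives case (ii) via $x\mapsto -x$.

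For sufficiency, suppose $x=1_U$ with $U=\{x=1\}$ a nonempty clopen set (continuity makes every $\{0,1\}$-valued function the indicator of a clopen set), so $\normc{x}=1$. Write $x=\frac{1}{2}(y+z)$ with $\normc{y},\normc{z}\le 1$. The essential point is that nonnegativity of the pieces is forced: the constraint $\|y^+\|_\infty+\|y^-\|_\infty\le1$ gives $y\le 1$ pointwise, and likewise $z\le 1$, so on $U$ the identity $y+z=2$ forces $y=z=1$ there; since $U\neq\varnothing$ this yields $\|y^+\|_\infty=1$, whence $\|y^-\|_\infty=0$ and $y\ge 0$ (and similarly $z\ge 0$). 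On $K\setminus U$ we then have $y+z=0$ with $y,z\ge0$, forcing $y=z=0$. Hence $y=z=x$, and $x$ is extreme; case (ii) follows again by $x\mapsto -x$.

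The main obstacle is exactly this last step, and in particular the observation that makes it work: the hypothesis $U\neq\varnothing$ is precisely what upgrades the a priori two-sided bound $y\le 1$ into the one-sided bound $y\ge 0$, and it is nonnegativity that pins $y$ down to be $0$ on $K\setminus U$. Without it, $y$ could dip negative off $U$ and the single inequality $\normc{y}\le1$ would not determine $y$. On the necessity side the only point needing care is producing a genuinely continuous perturbation supported where $x$ lies strictly between $0$ and $1$, which Urysohn's lemma supplies since $K$ is compact Hausdorff.
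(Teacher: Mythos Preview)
Your proof is correct and follows essentially the same strategy as the paper: a Urysohn-type bump perturbation to eliminate functions taking intermediate values, and translation by a constant to eliminate the remaining bad cases, together with a direct verification that $\{0,1\}$-valued functions are extreme. The only differences are organizational—your case split is by sign (sign-changing versus constant-sign) rather than by which of $\|x\|_\infty$ and $\rho(x)$ attains the maximum, and in the sufficiency step you deduce $y\ge 0$ from $\|y^+\|_\infty=1\Rightarrow\|y^-\|_\infty=0$, whereas the paper argues via $\diam(y)\le 1$ together with $\|y\|_\infty=1$.
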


\begin{proof}
	To show that every such function is extreme,
	let $\normc{x}=1$, with $x(t)=1$ for 
	$t\in A$ and $x(t)=0$ for $t\in A^c$, where
	$A$ is a nonempty subset of $K$.  Suppose that
	$$
	x = a y + bz\,,
	$$
	where $a,b\in (0,1)$ with
	$a+b=1$ and $\normc{y} = \normc{z} = 1$.
	Then, for $t\in A$, $ay(t)+bz(t) =1$.
	But $|y(t)|, |z(t)| \le  1$ and it follows that
	$y(t)= z(t)=1$ for every $t\in A$.

	Now, if $t\in A^c$, then $ay(t)+bz(t) = 0$.
	But $\diam(y), \diam(z) \le 1$ and $\|y\|_\infty,
	\|z\|_\infty =1$ imply that $0\le y(t), z(t)
	\le 1$ for every $t\in K$ and hence
	$y(t)= z(t) = 0$ for every $t\in A^c$.
	Therefore $y(t)=z(t)=x(t)$ for every $t\in K$
	and so $x$ is an extreme point.
	The case in which $x$ takes values $-1$ and $0$
	is done in exactly the same way.
	
	We now show that every extreme point is of this type.
	Let $x$ be an extreme point.  Since
	$\normc{x} = 
	\max\bigl\{\|x\|_\infty, \diam(x) \bigr\} =1$,
	there are two cases to consider.
	
	Case 1:  $\|x\|_\infty =1$ and $\diam(x) \le 1$.
	Then $x$ takes its values either in $[-1,0]$
	or $[0,1]$.  Suppose it is the latter.  Then there
	is at least one point at which $x(t)=1$.  Suppose
	there is a point $s\in K$ for which $0<x(s) <1$.
	Then, by a standard argument, there is a function
	$y\in C(K)$ with values in $[0,1]$ and supported
	by a neighbourhood of $s$, such that
	$\|x\pm y\|_\infty \le 1$. Clearly, we also have
	$\diam(x\pm y)\le 1$.  This implies that $x$ is 
	not extreme and so we have a contradiction.
	Therefore $x$ can only have values $0$ or $1$.

	Case 2: $\diam(x) =1$ and $\|x\|_\infty < 1$.
	There exist points $s,t$ in $K$ such that 
	$|x(t)-x(s)|= \diam(x)=1$.  Without loss of generality
	we may assume that $x(t)>x(s)$. Then $x$ takes its
	values in the interval $[x(s),x(t)]$.
	If there exists $u\in K$ such that $x(s)< x(u)<x(t)$,
	then, using the same perturbation argument as in 
	the proof of Case 1, it follows that $x$ is not extreme.
	Therefore $x$ has precisely two distinct values,
	$x(s)$ and $x(t)$.
	
	Suppose that $-1<x(s)<x(t)<1$.  Then, for sufficiently
	small $\varepsilon>0$, we have
	$\normc{x\pm \varepsilon 1_K} = 1$, which implies
	that $x$ is not extreme.  
	Therefore either $x(t)=1$ and $x(s)=0$,
	or $x(s)=-1$ and $x(t)=0$.
	
\end{proof}

Note that if $K$ is connected, then the closed unit
ball of $C(K)$ for both the supremum norm
and the norm $\normcdot$ has the same extreme 
points --- the constant functions $1$ and $-1$.
However, if $K$ has more than one connected component, then
there are functions that are extreme for $\|\cdot\|_\infty$
but not $\normcdot$, and vice versa.

\subsection{The extreme points of the unit ball of
 $\bigl(\MK, \|\cdot\|_0\bigr)$}
The isometric isomorphism
$$
\bigl(C(K),\normcdot)' \cong
\bigl(\MK, \|\cdot\|_0\bigr)\,.
$$
now enables us to  identify the extreme points of the unit
ball of $\bigl(\MK,\|\cdot\|_0\bigr)$.

\begin{theorem} \label{p:extremeMK}
	Let $K$ be a compact Hausdorff topological space. A regular
	Borel signed measure $\mu$ on $K$ 
	is an extreme point of the unit ball of 
	$\bigl(\MK,\|\cdot\|_0\bigr)$ if and only
	if it is one of the following:
	\begin{itemize}
		\item[(a)] $\mu = \pm \,\delta_t$, where 
		$t\in K$;
		\item[(b)] 
		$\mu = \delta_s-\delta_t$, where $s$, $t$
		are distinct points in $K$.
	\end{itemize}
\end{theorem}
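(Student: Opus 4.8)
The plan is to leverage the duality $(\CK,\normcdot)'\cong(\MK,\|\cdot\|_0)$ from Theorem~\ref{p: predual} together with the Cabello--Sanchez description of the extreme points of $(C_\rho(K),\|\cdot\|_\rho)'$ in Theorem~\ref{p:Cabello}. Write $B$ for the closed unit ball of $(\MK,\|\cdot\|_0)$. Since an extreme point of any unit ball lies on the unit sphere, and since evaluation against the constant function $1_K$ gives $|\mu(K)|\le\normc{1_K}\,\|\mu\|_0=\|\mu\|_0$, every extreme point $\mu$ of $B$ satisfies $\|\mu\|_0=1$ and $\mu(K)\in[-1,1]$. I would then organise the whole classification according to the value of $\mu(K)$, using throughout the identity $\|\mu\|_0=\tfrac12\bigl(\|\mu\|_1+|\mu(K)|\bigr)$ recorded in (\ref{e:mu0}).

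First I would treat $\mu(K)=\pm1$. Because $\mu(K)=\mu^+(K)-\mu^-(K)=1$ together with $\mu^+(K)\le1$ forces $\mu^+(K)=1$ and $\mu^-(K)=0$, the set $\{\mu\in B:\mu(K)=1\}$ is exactly the collection of probability measures; as this is the face of $B$ exposed by $1_K$ (whose supremum over $B$ equals $\normc{1_K}=1$), its extreme points are automatically extreme in $B$, and these are the Dirac masses $\delta_t$, giving case~(a). The value $\mu(K)=-1$ is symmetric and yields $-\delta_t$. Next I would exclude $0<|\mu(K)|<1$. If, say, $\mu(K)\in(0,1)$, then necessarily $\mu^+(K)=1$ and $q:=\mu^-(K)\in(0,1)$; choosing $\delta>0$ small with $(1+\delta)q\le1$, the two measures $\mu^+-(1+\delta)\mu^-$ and $\mu^+-(1-\delta)\mu^-$ are distinct, average to $\mu$, and each has $\|\cdot\|_0=\max\{1,(1\pm\delta)q\}=1$, so $\mu$ is not extreme. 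This reduces the problem to $\mu(K)\in\{-1,0,1\}$.

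The heart of the matter is the case $\mu(K)=0$. On the hyperplane $H=\{\mu:\mu(K)=0\}$ identity (\ref{e:mu0}) reads $\|\mu\|_0=\tfrac12\|\mu\|_1$, which is precisely the norm identifying $H$ with $(C_\rho(K),\|\cdot\|_\rho)'$; hence $B\cap H$ is exactly the unit ball of that dual space. Any extreme point of $B$ lying in $H$ is then also extreme in $B\cap H$, so Theorem~\ref{p:Cabello} forces it to have the form $\delta_s-\delta_t$ with $s\ne t$, giving case~(b). The main obstacle is the converse: since $H$ is not a face of $B$, a point extreme in the slice $B\cap H$ need not be extreme in $B$. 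To overcome this I would take a decomposition $\delta_s-\delta_t=\tfrac12(\mu_1+\mu_2)$ with $\mu_1,\mu_2\in B$ and first show it cannot leave $H$. Writing $c:=\mu_1(K)=-\mu_2(K)$, the bound $\|\mu_i\|_1\le2-|c|$ furnished by (\ref{e:mu0}), combined with $\|\mu_1+\mu_2\|_1=2\|\delta_s-\delta_t\|_1=4$ and the triangle inequality, yields $4\le4-2|c|$, whence $c=0$. Thus $\mu_1,\mu_2\in H$, and extremality of $\delta_s-\delta_t$ in $B\cap H$ completes the argument, giving $\mu_1=\mu_2=\delta_s-\delta_t$. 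The crucial device throughout this last step is the additive formula $\|\mu\|_0=\tfrac12(\|\mu\|_1+|\mu(K)|)$, which converts $\|\mu_i\|_0\le1$ into a variation-norm estimate sharp enough, against the maximal variation $\|\delta_s-\delta_t\|_1=2$, to pin the components to the hyperplane $\mu(K)=0$; once this transfer of extremality is secured, Theorem~\ref{p:Cabello} and the $\mu(K)=\pm1$ analysis together deliver exactly the measures listed in (a) and (b).
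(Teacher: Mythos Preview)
Your argument is correct, and the sufficiency direction (that $\pm\delta_t$ and $\delta_s-\delta_t$ are extreme) is essentially identical to the paper's Steps~2 and~3: both pivot on the identity $\|\mu\|_0=\tfrac12(\|\mu\|_1+|\mu(K)|)$ to force the components of a midpoint decomposition first into the correct variation norm and then into the hyperplane $\{\mu(K)=0\}$, whereupon Cabello--Sanchez (Theorem~\ref{p:Cabello}) closes the case. Your observation that $\{\mu\in B:\mu(K)=1\}$ is the face exposed by $1_K$ is a slightly cleaner way of packaging Step~2.

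The genuine difference is in the necessity direction. The paper embeds $(\CK,\normcdot)$ isometrically into $C(\tilde K)$ with $\tilde K=K\cup K^2$ via $\tilde x(u)=x(u)$, $\tilde x(s,t)=x(s)-x(t)$, and then cites the classical fact \cite[V.8.6]{DS} that extreme points of the dual unit ball of a subspace of a $C$-space are restrictions of $\pm\delta$-measures; this immediately gives the list $\{\pm\delta_u,\ \delta_s-\delta_t\}$ as the only candidates. You instead argue directly by a trichotomy on $\mu(K)$, disposing of $0<|\mu(K)|<1$ by the explicit perturbation $\mu^+\! -(1\pm\delta)\mu^-$ and reducing $\mu(K)=0$ to Theorem~\ref{p:Cabello}. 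Your route is more self-contained (no auxiliary space $\tilde K$, no appeal to the Dunford--Schwartz result) and makes transparent exactly why intermediate values of $\mu(K)$ fail; the paper's route is shorter once one is willing to invoke the subspace version of the Arens--Kelley description of extreme functionals.
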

\begin{proof}
	\mbox{}\\
	Step 1.  We show that every extreme point must 
	be 	one of the types described in the statement.
Let $\tilde{K}$ be the space $K \cup K^2$, with 
the sum topology, where $K^2$ carries the product
topology.  For $x\in \CK$, let
$\tilde{x}$ be the continuous function on $\tilde{K}$
defined by $\tilde{x}(u) = x(u)$ for $u\in K$
and $\tilde{x}(s,t) = x(s)-x(t)$ for $(s,t)\in K^2$.
The fact that 
$$
\|\tilde{x}\|_\infty =
\max\bigl\{\sup\{|x(u)|:u\in K\},
\sup\{|x(s,t)| : s,t\in K \}\bigr\} = \normc{x}
$$
shows that the mapping $x\mapsto \tilde{x}$
is an isometric embedding of $\bigl(C(K),\normcdot\bigr)$
into a closed subspace of 
$\bigl(C(\tilde{K}),\|\cdot\|_\infty \bigr) $.  
It follows from \cite[V.8.6]{DS} that every extreme
point of the unit ball of 
$\bigl(C(K),\normcdot)' \cong
\bigl(\MK,\|\cdot\|_0\bigr)$
is either $\pm\delta_u$ for some $u\in K$,
or $\delta_s-\delta_t$ for some $s,t\in K$.

\medskip
	Step 2.  
	$\pm\,\delta_t$ are extreme points: 
	Suppose that 
	$$
	\delta_t = a \mu_1 + b\mu_2 \,,
	$$
	where $\mu_1, \mu_2\in \MK$, 
	$\|\mu_1\|_0 = \|\mu_2\|_0=1$, $a,b\in (0,1)$
	and $a+b=1$.
	Applying $\delta_t$ to the function $1_K$, we have
	$$
	a\mu_1(K) + b\mu_2(K) = 1\,.
	$$
	On the other hand, $\normc{1_K} = 1$ implies that
	$|\mu_i(K)| \le 1$ for $i=1,2$.
	Therefore $\mu_1(K)= \mu_2(K) = 1$ and it 
	follows from $\|\mu_i\|_0 = 
	\frac{1}{2} \bigl(\|\mu_i\|_1+ |\mu_i(K)|\bigr)
	=1$ that $\|\mu_1\|_1 = \|\mu_2\|_1 = 1$.
	Since $\delta_t$ is an extreme point of 
	the unit ball of $\MK$ for the variation norm,
	it follows that $\mu_1=\mu_2 = \delta_t$.
	Therefore $\delta_t$ is an extreme
	point of the unit ball of
	$\bigl(\MK,\|\cdot\|_0\bigr)$.

	\medskip
		Step 3. 
		$\delta_s-\delta_t$ is extreme for every pair
		of distinct points 
		$s,t\in K$:
		Suppose that 
		$$
		\delta_s - \delta_t = a \mu_1 + b\mu_2 \,,
		$$
		where $\mu_1, \mu_2\in \MK$, 
		$\|\mu_1\|_0 = \|\mu_2\|_0=1$, $a,b\in (0,1)$
		and $a+b=1$.
		Without loss of generality, we may assume
		that $a=b= \frac{1}{2}$.
		As $\|\delta_s - \delta_t\|_1=2$, we have
		$4 \le \|\mu_1\|_1 + \|\mu_2\|_1$.
		On the other hand,
		$$
		\|\mu_i\|_0 = \frac{1}{2}\bigl( \|\mu_i\|_1 +
		|\mu_i(K)| \bigr) = 1\quad\text{for $i=1,2$}
		$$
		and it follows that $\|\mu_i\|_1 =2$ and
		$\mu_i(K) = 0$ for $i=1,2$.
		Therefore $\delta_s - \delta_t$, $\mu_1$ and $\mu_2$
		all lie in $\bigl(C_\rho(K),\|\cdot\|_\rho\bigr)'$,
		the space of regular Borel signed measures on $K$
		that are zero on $K$.  Furthermore, these measures
		are all unit vectors in this space, since the variation
		norm is exactly twice the dual norm in $\bigl(C_\rho(K),\|\cdot\|_\rho\bigr)'$.
		It follows from the result of Cabello-Sanchez 
		(Theorem \ref{p:Cabello} above) that 
		$\mu_1 = \mu_2 = \delta_s - \delta_t$.
		Therefore $\delta_s - \delta_t$ is an extreme
		point of the unit ball of
		$\bigl(\MK,\|\cdot\|_0\bigr)$.
		 
\end{proof}

We can now describe the extreme points of the unit
ball of $\opolyK{n}$ for the supremum norm.
Recall that, when $n$ is odd, the supremum and 
regular norms coincide.  Thus, by Propositions
\ref{p:isomorphism},
\ref{p:extremeRegular} and Theorem \ref{p:extremeMK}
we have the following result.

\begin{corollary} \label{p:extremeSup}
	Let $K$ be a compact, Hausdorff space.
	\begin{itemize}
		\item[(a)]
		If $n$ is odd, then   $P\in \opolyK{n}$
		is an extreme point of the closed unit
		ball of the space 
		$\bigl(\opoly{n}{C(K)},\|\cdot\|_\infty)$ 
		if and only if $P= \pm \delta_t^n$,
		for some $t\in K$,
		where
		$$ \delta_t^n(x)= x(t)^n\,.
		$$
		\item[(b)] 
		If $n$ is even, then $P\in \opolyK{n}$ is an extreme 
		point of the closed unit
		ball of the space 
		$\bigl(\opoly{n}{C(K)},\|\cdot\|_\infty)$ 
		if and only if $P$ is one of the following:
		\begin{itemize}
		\item[(i)] 
		$P = \pm \delta_t^n$ for some $t\in K$, 
		where $\delta_t^n(x)= x(t)^n$;
		\item[(ii)] 
		$P= \delta_s^n - \delta_t^n$, where $s,t$ are distinct points
		in $K$, and 
		$$
		(\delta_s^n - \delta_t^n)(x) = x(s)^n - x(t)^n \,.
		$$
		\end{itemize}
	\end{itemize}

\end{corollary}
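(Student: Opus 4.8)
The plan is to exploit the single structural fact that a surjective linear isometry between normed spaces carries the unit ball onto the unit ball, and hence induces a bijection between the extreme points of the two unit balls. Proposition \ref{p:isomorphism} provides exactly such isometries: the map $J_n$ is an isometric isomorphism from $\MK$, equipped with the appropriate norm, onto $\opoly{n}{C(K)}$ with the supremum norm. So once I know the extreme points of the unit ball of $\MK$ in the relevant norm, the corresponding extreme points of $\opolyK{n}$ are obtained simply by pushing them through $J_n$ and computing the images. Both norm identifications and both extreme-point classifications have already been established, so the argument is purely a matter of assembling these ingredients and evaluating $J_n$ on point masses.

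For part (a), where $n$ is odd, I would invoke Proposition \ref{p:isomorphism}(b), which says that the supremum and regular norms coincide on $\opolyK{n}$ and that $J_n$ is an isometric isomorphism from $\bigl(\MK,\|\cdot\|_1\bigr)$ onto $\bigl(\opoly{n}{C(K)},\|\cdot\|_\infty\bigr)$. By Proposition \ref{p:extremeRegular} the extreme points of the unit ball of $\bigl(\MK,\|\cdot\|_1\bigr)$ are precisely the measures $\pm\delta_t$ with $t\in K$. Applying $J_n$ gives $(J_n\delta_t)(x)=\int_K x^n\,d\delta_t = x(t)^n = \delta_t^n(x)$, so the extreme points of the unit ball of $\bigl(\opoly{n}{C(K)},\|\cdot\|_\infty\bigr)$ are exactly $\pm\delta_t^n$, as claimed.

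For part (b), where $n$ is even, I would instead use Proposition \ref{p:isomorphism}(c), which identifies $J_n$ as an isometric isomorphism from $\bigl(\MK,\|\cdot\|_0\bigr)$ onto $\bigl(\opoly{n}{C(K)},\|\cdot\|_\infty\bigr)$. The extreme points of the unit ball of $\bigl(\MK,\|\cdot\|_0\bigr)$ are classified in Theorem \ref{p:extremeMK} as the measures $\pm\delta_t$ (type (a)) and $\delta_s-\delta_t$ for distinct $s,t\in K$ (type (b)). Transporting these through $J_n$, the point masses again yield $\pm\delta_t^n$, while the difference of point masses yields $(J_n(\delta_s-\delta_t))(x)=\int_K x^n\,d(\delta_s-\delta_t)=x(s)^n-x(t)^n=(\delta_s^n-\delta_t^n)(x)$. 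This produces precisely the two families (i) and (ii) in the statement.

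There is essentially no obstacle here: the bijective correspondence of extreme points under an onto isometry is standard, and all the hard analysis (the norm identifications in Proposition \ref{p:isomorphism} and the extreme-point characterisation in Theorem \ref{p:extremeMK}) has already been carried out. The only computation required is the evaluation of the integral $\int_K x^n\,d\mu$ when $\mu$ is a point mass or a difference of two point masses, which is immediate. Thus the corollary follows by direct application of Propositions \ref{p:isomorphism} and \ref{p:extremeRegular} together with Theorem \ref{p:extremeMK}.
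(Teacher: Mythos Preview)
Your proposal is correct and follows essentially the same route as the paper: the corollary is deduced directly from Propositions~\ref{p:isomorphism} and~\ref{p:extremeRegular} together with Theorem~\ref{p:extremeMK}, using that a surjective isometry induces a bijection between extreme points of the unit balls. One small quibble: Proposition~\ref{p:extremeRegular} is stated for polynomials under the regular norm, not for measures under $\|\cdot\|_1$; what you actually invoke in part~(a) is the classical fact (cited in the paper just before Proposition~\ref{p:extremeRegular}) that the extreme points of the unit ball of $\bigl(\MK,\|\cdot\|_1\bigr)$ are $\pm\delta_t$, but this is only a labeling issue and the argument is sound.
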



\begin{example}
	Suppose that the compact, Hausdorff space $K$
	has just two points, $\alpha, \beta$.  Then
	the vector lattice $\CK$ can be identified 	with $\R^2$, where $(x_1,x_2)\in \R^2$
	corresponds to the function 
	$\alpha \mapsto x_1$, $\beta\mapsto x_2$.
	The supremum norm on $\CK$ is identified
	with the supremum norm on $\R^2$.
	The orthogonally additive $n$-homogeneous
	polynomials on $\R^2$ have the form
	$P(x) = a_1 x_1^n + a_2 x_2^n$. The regular and supremum
	norms are
	\begin{align*}
	\|P\|_r &= |a_1| + |a_2| \,,\\
	\|P\|_\infty & = 
	\begin{cases}
		|a_1| + |a_2|\,, \quad\text{if $n$ is odd,}\\
		\max\{|a_1|, |a_2|, |a_1+a_2|  \} \,, 
		\quad\text{if $n$ is even.}
	\end{cases}
	\end{align*}
	The diagrams below show the unit balls
	for both norms.
\end{example}



\begin{minipage}{0.46\linewidth}
	\begin{center}
		\begin{tikzpicture}[xscale=1,yscale=0.9,domain=-1.6:1.6]
		\begin{axis}[
		width=5.5cm,
		height=5cm,
		axis lines = middle, enlargelimits = true,
		]
		\addplot[red,very thick,domain=0:1] plot {-x+1}; 
		\addplot[blue,mark=*]  coordinates {(1,0)};
		\addplot[blue]  coordinates {(1.2,0.2)} node{\large{$x_1^n$}};
		\addplot[red,very thick,domain=-1:0] plot {x+1}; 
		\addplot[blue]  coordinates {(0.3,1)} node{\large{$x_2^n$}};
		\addplot[blue,mark=*]  coordinates {(0,1)};
		\addplot[red,very thick,domain=-1:0] plot {-x-1}; 
		\addplot[blue,mark=*]  coordinates {(0,-1)};
		\addplot[blue]  coordinates {(0.5,-1)} node{\large{$-x_2^n$}};
		\addplot[red,very thick,domain=0:1] plot {x-1}; 
		\addplot[blue,mark=*]  coordinates {(-1,0)};
		\addplot[blue]  coordinates {(-1.4,0.2)} node{\large{$-x_1^n$}};
		\end{axis}
		\end{tikzpicture}
	\end{center}
	
\end{minipage}
\hfill
\begin{minipage}{0.46\linewidth}
	\begin{center}
		\begin{tikzpicture}[xscale=1,yscale=1.15,domain=-3:3]
		\begin{axis}[
		width=5.5cm,
		height=5cm,
		axis lines = middle, enlargelimits = true,
		]
		\addplot[red,very thick,domain=0:1] plot {-x+1}; 
		\addplot[blue,mark=*]  coordinates {(1,0)};
		\addplot[blue]  coordinates {(1.2,0.3)} node{{$x_1^n$}};
		\addplot[red,dashed, very thick,domain=-1:0] plot {x+1}; 
		\addplot[blue,mark=*]  coordinates {(0,1)};
		\addplot[blue]  coordinates {(0.3,1.3)} node{{$x_2^n$}};
		\addplot[red,very thick,domain=-1:0] plot {-x-1}; 
		\addplot[blue,mark=*]  coordinates {(0,-1)};
		\addplot[blue]  coordinates {(-0.4,-1.4)} node{{$-x_2^n$}};
		\addplot[red,dashed, very thick,domain=0:1] plot {x-1}; 
		\addplot[blue,mark=*]  coordinates {(-1,0)};
		\addplot[red, very thick,domain=-1:0] plot {1} ;
		\addplot[blue,mark=*]  coordinates {(-1,1)};
		\addplot [red, very thick, mark=none] coordinates {(-1, 0) (-1, 1)}; 
		\addplot[blue,mark=*]  coordinates {(1,-1)};
		\addplot[blue]  coordinates {(-1.1, 1.3)} node{{$-x_1^n+x_2^n$}};
		\addplot[red, very thick,domain=0:1] plot {-1}; 
		\addplot [red, very thick, mark=none] coordinates {(1, -1) (1, 0)} ;
		\addplot[blue]  coordinates {(0.9,-1.4)} node{{$x_1^n-x_2^n$}};
		\addplot[blue]  coordinates {(-1.4,0.3)} node{{$-x_1^n$}};
		\end{axis}
		\end{tikzpicture}
	\end{center}
	
\end{minipage}


\begin{minipage}{0.46\linewidth}
	\medskip
	\begin{center}
		{Unit ball of
			$\bigl(\opolyK{n},\|\cdot\|_r\bigr)$ for any $n$ \\
			and $(\opoly{n}{\CK},\|\cdot\|_\infty)$ for $n$ odd.}
	\end{center}
\end{minipage}
\hfill
\begin{minipage}{0.46\linewidth}
	\begin{center}
		{Unit ball of
			$\bigl(\opolyK{n},\|\cdot\|_\infty\bigr)$ \\ for $n$ even.}    
	\end{center}
\end{minipage}



\bigskip

\subsection{The isometries of $(C(K), \normcdot)$}
We would like next to determine the isometries
of the spaces $\opolyK{n}$, both for the 
regular and the supremum norms. 
Our results show that this reduces to the 
problem of finding the isometries between
the spaces $\MK$ for the variation norm
and the equivalent norm $\|\cdot\|_0$.

The Banach-Stone theorem \cite[V.8.8]{DS}
uses the classification of the extreme points 
of the space of regular Borel signed measures
to determine the isometries of $C(K)$ spaces
with the supremum norm.
We recall the statement of this theorem:
if $T$ is an isometric isomorphism
between $C(K)$ and $C(L)$,
then there exists a homeomorhism
$\varphi\colon L\to K$ and a
function $\alpha\in C(L)$ with
values $\pm 1$, such that
\begin{equation}\label{e:BS}
\bigl(Tx\bigr)(s)  = \alpha(s) x(\varphi(s))
\end{equation}
for all $x\in \CK$, $s\in L$.
We shall say that an linear
bijection, $T$, from $C(K)$ to $C(L)$ is 
\emph{canonical} if it has this form.
In other words, 
$$
Tx = \alpha\,x\circ\varphi \,,
$$
where $\alpha$, $\varphi$ are as described above.

Consider the space $\bigl(\MK,\|\cdot\|_0\bigr) \cong \bigl(C(K),\normcdot
\bigr)'$. 
By Theorem~\ref{p:extremeMK}, the 
set of extreme points of the unit
ball of $({\cal M}(K),\|\cdot\|_0)$ is  $\{\pm\delta_u,\delta_t-\delta_s
:u,t,s\in K, t\neq s\}$. The crucial step in showing that an isometry $T$ of 
from $(C(K),\normcdot)$ to $(C(L),\normcdot)$  is
canonical is to establish that $T^t$, the transpose of $T$, maps each
$\delta_t$ to $\pm\delta_s$ for some $s$ in $K$. This leads to the following 
proposition.

\begin{proposition}\label{morethan2}
	Let $K$ and $L$ be compact Hausdorff topological spaces and let  
	$T\colon \bigl(C(K),\normcdot\bigr) 
	\to \bigl(C(L),\normcdot\bigr)$ be an isometric isomorphism. Let 
	$S_L=\{t\in L: T^t(\delta_t)=\pm\delta_s,\mbox{ for some } s\in K\}$. If $S_L$ 
	contains more than one point, then $T$ is canonical. Moreover, in addition,
	$\alpha$ will either take the constant value $1$ or $-1$ on $L$.
\end{proposition}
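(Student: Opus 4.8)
The plan is to analyse the transpose $T^t\colon\bigl(\EuScript{M}(L),\|\cdot\|_0\bigr)\to\bigl(\MK,\|\cdot\|_0\bigr)$, which by Theorem~\ref{p: predual} (applied to both $K$ and $L$) is again an isometric isomorphism and therefore carries the extreme points of the unit ball of $\bigl(\EuScript{M}(L),\|\cdot\|_0\bigr)$ bijectively onto those of $\bigl(\MK,\|\cdot\|_0\bigr)$, as classified in Theorem~\ref{p:extremeMK}. For each $t\in S_L$ I would write $T^t\delta_t=\epsilon(t)\,\delta_{\sigma(t)}$ with $\epsilon(t)\in\{\pm1\}$ and $\sigma(t)\in K$; injectivity of $T^t$ forces $\sigma$ to be injective on $S_L$ (two distinct $\delta_t$'s cannot map to the same signed point mass, and they cannot have opposite-sign images with equal $\sigma$-value, since $T^t$ sends the nonzero measure $\delta_{t_1}+\delta_{t_2}$ to a nonzero measure). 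The whole argument rests on the identity $\|\mu\|_0=\tfrac12\bigl(\|\mu\|_1+|\mu(K)|\bigr)$ from (\ref{e:mu0}), which reads off $\|\cdot\|_0$ of any finite signed combination of point masses directly from its coefficients.

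First I would show that $\epsilon$ is constant on $S_L$. Given distinct $t_1,t_2\in S_L$ with $T^t\delta_{t_i}=\epsilon_i\delta_{s_i}$ and $s_1\neq s_2$, the measure $\delta_{t_1}-\delta_{t_2}$ is a unit vector, so by isometry $\|\epsilon_1\delta_{s_1}-\epsilon_2\delta_{s_2}\|_0=1$. Since $s_1\neq s_2$ this combination has variation $2$ and total mass $\epsilon_1-\epsilon_2$, so (\ref{e:mu0}) gives $\tfrac12\bigl(2+|\epsilon_1-\epsilon_2|\bigr)=1$, forcing $\epsilon_1=\epsilon_2$. Thus a single sign $\epsilon$ is attached to every point of $S_L$, which already yields the ``moreover'' clause once $S_L$ is shown to exhaust $L$.

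The hard part will be proving $S_L=L$, and this is precisely where the hypothesis that $S_L$ contains at least two points is used. Fix distinct $t_1,t_2\in S_L$, so $T^t\delta_{t_i}=\epsilon\delta_{s_i}$ with $s_1\neq s_2$, and suppose some $t_3\in L$ lies outside $S_L$. Then $T^t\delta_{t_3}$ is an extreme point of the unit ball of $\bigl(\MK,\|\cdot\|_0\bigr)$ that is not a signed point mass, hence by Theorem~\ref{p:extremeMK} it has the form $\delta_a-\delta_b$ with $a\neq b$. For each $i$ the measure $\delta_{t_i}-\delta_{t_3}$ is a unit vector, so isometry forces $\|\epsilon\delta_{s_i}-\delta_a+\delta_b\|_0=1$. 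This combination always has total mass $\epsilon$, so (\ref{e:mu0}) requires its variation to equal $1$; a short check on whether $s_i$ coincides with $a$ or $b$ shows this is possible only when $\epsilon=1$ and $s_i=a$, or when $\epsilon=-1$ and $s_i=b$. Applying this to both $i=1,2$ with the common sign $\epsilon$ forces $s_1=s_2$ (both equal to $a$, or both equal to $b$), contradicting $s_1\neq s_2$. Hence no such $t_3$ exists and $S_L=L$.

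Finally, with $T^t\delta_t=\epsilon\delta_{\sigma(t)}$ for every $t\in L$ and $\epsilon$ constant, the duality $(Tx)(t)=\langle\delta_t,Tx\rangle=\langle T^t\delta_t,x\rangle$ gives $(Tx)(t)=\epsilon\,x(\sigma(t))$, that is, $Tx=\epsilon\,(x\circ\sigma)$. It then remains to verify that $\sigma\colon L\to K$ is a homeomorphism: continuity holds because $t\mapsto x(\sigma(t))=\epsilon\,(Tx)(t)$ is continuous for every $x\in C(K)$ and $K$ carries the weak topology induced by $C(K)$; injectivity follows from injectivity of $T^t$; and surjectivity follows since $T^t$ must hit every $\pm\delta_s$ with $s\in K$, while difference-type extreme points of $\EuScript{M}(L)$ map to difference-type extreme points of $\MK$, so each $\delta_s$ can only arise as $\epsilon\,T^t\delta_t=\epsilon\delta_{\sigma(t)}$. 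A continuous bijection between compact Hausdorff spaces is a homeomorphism, so $T$ has the canonical form (\ref{e:BS}) with the constant unimodular factor $\alpha\equiv\epsilon$, as claimed.
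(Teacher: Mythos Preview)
Your proof is correct, and it takes a genuinely different route from the paper's. Both arguments analyse $T^t$ on extreme points and reach a contradiction by looking at $T^t(\delta_{t_i}-\delta_{t_3})$ where $t_i\in S_L$ and $t_3\notin S_L$, but the technical device differs. The paper argues by \emph{extreme point preservation}: it finds a point $t\in S_L$ whose image $\pm\delta_w$ satisfies $w\notin\{u,v\}$ (where $T^t\delta_r=\delta_u-\delta_v$), and then notes that $\delta_u-\delta_v\pm\delta_w$ cannot be extreme. You instead use the explicit formula (\ref{e:mu0}) to compute $\|\cdot\|_0$ of three-point combinations directly, which lets you squeeze out the contradiction from just two points of $S_L$ without having to locate a third auxiliary point. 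The same contrast appears in the ``constant sign'' step: the paper observes that $\delta_u+\delta_v$ is not extreme, while you compute $\|\epsilon_1\delta_{s_1}-\epsilon_2\delta_{s_2}\|_0$ outright. Finally, once $S_L=L$, the paper shows that $T$ is in fact an isometry for the \emph{supremum} norms and invokes the classical Banach--Stone theorem, whereas you read off $Tx=\epsilon\,(x\circ\sigma)$ from duality and verify by hand that $\sigma$ is a homeomorphism. Your approach is more self-contained and slightly more economical; the paper's buys brevity at the last step by outsourcing to Banach--Stone.
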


\begin{proof} Assume that $|S_L|\ge 2$ and $S_L^c$
	is non-empty. Choose $r\in S_L^c$. Then we have that $T^t(\delta_r)=\delta_u-
	\delta_v$ for
	some $u$ and $v$ in $K$. Since $|S_L|\ge 2$, there are $t$ and $s$ in $L$ with 
	$t\neq s$ so that $T^t(\delta_t)=\pm\delta_w$ and $T^t(\delta_s)=\pm\delta_p$ 
	for some $w$ and $p$ in $K$ with  $w\neq p$. We now claim that 
	$\{w,p\}\not=\{u,v\}$ and so there $t$ in $L$ so that  
	$T^t(\delta_t)=\pm\delta_w$ with $w\not=u,v$. Without loss of generality suppose
	that $w=u$ and $p=v$. Then we have  $T^t(\delta_t)=\pm\delta_u$ and $T^t(
	\delta_s)=\pm\delta_v$. Hence $$
	T^t(\delta_t-\delta_s)=\pm\delta_u\mp\delta_v
	$$ and therefore $(T^t)^{-1}(\delta_u-\delta_v)=\pm (\delta_t-\delta_s)$. Since 
	$(T^t)^{-1}$ is a bijection we have  $\delta_r=\pm (\delta_t-\delta_s)$ which is 
	impossible.  Let $t$ in $L$ be such that  
	$T^t(\delta_t)=\pm\delta_w$ with $w\not=u,v$. Then $\delta_r-\delta_t$ is an 
	extreme point of the unit ball of $({\cal M}(L),\|\cdot\|_0)$. However, 
	$$
	T^t(\delta_r-\delta_t)=\delta_u-\delta_v\pm\delta_w\,.
	$$
	Since $w\not= u,v$, $\delta_u-\delta_v\pm\delta_s$, is not
	an extreme point 
	of the unit ball of $({\cal M}(K),\|\cdot\|_0)$. This is a contradiction. 
	Hence, if $|S_L|\ge 2$, then $S_L=L$.
	
	Note that $T\colon (C(K),\|\cdot\|_\infty) \to (C(L),\|\cdot
	\|_\infty)$ is an isomorphism since the norms $\|\cdot\|_\infty$ and
	 $\normcdot$ are equivalent. Further, since $S_L=L$, 
	 we have that for every $t$ in $L$ 
	there is $s$ in $K$ such that $T^t(\delta_t)=\pm\delta_s$. Hence $T^t$ maps 
	extreme points of the unit ball of $({\cal M}(L),\|\cdot\|_1)$ to the extreme 
	points of the unit ball of $({\cal M}(K),\|\cdot\|_1)$ in one to one manner. 
	Hence $T^t(B_{{\cal M}(L)})\subseteq B_{{\cal M}(K)}$ and  $(T^t)^{-1}(B_{{\cal M}(K)})
	\subseteq B_{{\cal M}(L)}$. This gives us that $T: (C(K),\|\cdot\|_\infty) \to (C(L
	),\|\cdot
	\|_\infty)$ is an isometric isomorphism. Hence we can now apply Banach-Stone 
	theorem to find a homeomorphism $\varphi$ from $L$ to $K$ and a function  
	$\alpha\in C(K)$  with $\alpha(t)=\pm 1$ for all $t\in K$ such that    
	
	$$
	T(x)=\alpha\, x\circ\varphi.
	$$
	
	Now let us see that $\alpha$ is constant on $L$. To see this suppose that 
	$$
	S_L^+=\{t\in L:T^t(\delta_t)=\delta_s \hbox{ for some } s\in K\}
	$$
	and
	$$
	S_L^-=\{t\in L:T^t(\delta_t)=-\delta_s \hbox{ for some } s\in K\}
	$$ 
	are both non empty. Choose $t$ in $S_L^+$ and $r$ in $S_L^-$. Suppose that
	$T^t(\delta_t)=\delta_u$ and that $T^t(\delta_r)=-\delta_v$. Then $\delta_t-
	\delta_r$
	is an extreme point of the unit ball of  $({\cal M}(L),\|\cdot\|_0)$ yet 
	$T^t(\delta_t-\delta_r)=\delta_u+\delta_v$ is not extreme point of the unit ball
	of $({\cal M}(K),\|\cdot\|_0)$. The result now follows and we get that
	$$
	T(x)=\pm\, x\circ\varphi.
	$$
\end{proof}

Let us now consider the case when $|S_L|=1$ and show that we can construct a 
non canonical isometry in this case. 
To help understand this result, we first consider the 
following example.

\begin{example}
	
	Let $K=\{a,b\}$ and 
	$L=\{\alpha,\beta\}$. We observe that we can identify both $(C(K),\normcdot)$ 
	and $(C(L),\normcdot)$ with $\mathbb{R}^2$. Let $x$ in $C(K)$ and set $x_1=x(a)$ and $x_2=x(b)$.
	Then $(x_1,x_1) \in \mathbb R^2$ and the norm of $(x_1,x_1)$ is given by
	$$
	\normc{(x_1,x_2)}=\max\{|x_1|,|x_2|,|x_1-x_2|\}.
	$$

	Now define $T: (\mathbb{R}^2,\normcdot) \to (\mathbb{R}^2,\normcdot)$ by
	
	$$
	T(x_1,x_2)=(x_1,x_1-x_2)
	$$
	
	Clearly, $T$ is a continuous linear bijection. We can also show that 
	\begin{align*}
	T^t(\delta_{\alpha})=&\delta_a,\\
	T^t(\delta_{\beta})=&\delta_a-\delta_b.
	\end{align*}
	
	We have that
	$$
	\normc{T(x_1,x_2)}=\max\{|x_1|,|x_1-x_2|,|x_2|\}=
	\normc{(x_1,x_2)}
	$$
	and hence $T$ is an isometry. However, $T$ is not canonical since
	
	\begin{align*}
	(Tx)(\alpha) & = x(a)\,,\\
	(Tx)(\beta) & = x(a)-x(b)\,.
		\end{align*}
	
\end{example}

Guided by Proposition \ref{morethan2} and the above example,
we now have the following result.

\begin{theorem}
	Let $K$ and $L$ be compact Hausdorff topological spaces. 
	\begin{enumerate}
		\item[(a)] Suppose that $K$ and $L$ do not contain isolated 
		points. Then every isometric isomorphism $T$ from $(C(K),\normcdot)$ onto 
		$(C(L),\normcdot)$ has the form 
		$$
		T(x)=\pm\, x\circ\varphi.
		$$
		for some homeomorphism $\varphi\colon L\to K$.
		\item[(b)] Suppose that either $K$ or $L$ contains an isolated 
		point. Let $T:(C(K),\normcdot) \to (C(L),\normcdot)$ be an isometric 
		isomorphism. Then $T$ is one of the following types.
		\begin{enumerate}
			\item[(i)] 
			$$
			T(x)=\pm\, x\circ\varphi.
			$$
			for some homeomorphism $\varphi\colon L\to K$.
			\item[(ii)] There exist $p$ in $K$ and $t$ in $L$ and a 
			homeomorphism $\varphi\colon L\setminus\{t\}\to K\setminus\{p\}
			$ such that $T=\pm T_1$, where 
			\begin{align*}
			(T_1x)(t)&=x(p)\\
			(T_1 x)(s)&=x(p)-x(\varphi(s))
			\qquad\text{for $s\neq t$.}
			\end{align*}
		\end{enumerate}
	\end{enumerate}
\end{theorem}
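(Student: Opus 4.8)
The plan is to pass to the transpose $T^t\colon (\EuScript{M}(L),\|\cdot\|_0) \to (\MK,\|\cdot\|_0)$, which is again an isometric isomorphism and therefore carries the extreme points of one unit ball bijectively onto those of the other. By Theorem~\ref{p:extremeMK} these extreme points are exactly the point masses $\pm\delta_u$ and the differences $\delta_s-\delta_t$. Following Proposition~\ref{morethan2}, set $S_L=\{t\in L: T^t(\delta_t)=\pm\delta_s \text{ for some } s\in K\}$. The whole argument is a trichotomy on $|S_L|$, and the topological hypotheses enter only through the size of $S_L$.

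First I would analyse the function $g:=T1_K\in C(L)$. Since $(T^t\delta_t)(K)=\langle \delta_t,T1_K\rangle = g(t)$ and $T^t\delta_t$ is an extreme point, the identity $\|\mu\|_0=\tfrac12(\|\mu\|_1+|\mu(K)|)$ together with the classification of extreme points forces $g(t)=(T^t\delta_t)(K)\in\{-1,0,1\}$ for every $t$: the point masses have total mass $\pm1$, the differences total mass $0$. As $g$ is continuous with values in a three-point set, each level set $g^{-1}(\{1\}),g^{-1}(\{0\}),g^{-1}(\{-1\})$ is clopen, and $S_L=g^{-1}(\{\pm1\})$ is clopen. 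Because $T$ is injective, $g=T1_K\neq 0$, so $S_L\neq\varnothing$. In particular, if $|S_L|=1$ then its single point is a clopen singleton, hence an isolated point of $L$.

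If $|S_L|\ge 2$, Proposition~\ref{morethan2} immediately gives $T(x)=\pm\,x\circ\varphi$, i.e.\ type~(i). The substantive case is $|S_L|=1$, say $S_L=\{t_0\}$ with $T^t\delta_{t_0}=\delta_p$ (replacing $T$ by $-T$ if the sign is negative, which yields $T=\pm T_1$). For $s\neq t_0$ write $T^t\delta_s=\delta_{u(s)}-\delta_{v(s)}$. The key step is to evaluate $T^t$ on the extreme point $\delta_{t_0}-\delta_s$: its image $\delta_p-\delta_{u(s)}+\delta_{v(s)}$ has total mass $1$, so it must be a single point mass, which is possible only if $u(s)=p$. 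Hence $T^t\delta_s=\delta_p-\delta_{v(s)}$ for all $s\neq t_0$, and reading off $(Tx)(s)=\langle T^t\delta_s,x\rangle$ recovers precisely the formulas of type~(ii) with $\varphi:=v$. Injectivity of $T^t$ gives injectivity of $\varphi$, and surjectivity onto $K\setminus\{p\}$ follows because each extreme point $\delta_u$ ($u\neq p$) must be hit and the only preimages producing a single positive point mass are the $\delta_{t_0}-\delta_s$, which map to $\delta_{v(s)}$. Continuity of $\varphi$ is then automatic: $x\circ\varphi$ is continuous for every $x\in C(K)$ (being a restriction of $Tx$), and the topology of the compact Hausdorff space $K$ is the initial topology for $C(K)$. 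Running the same analysis on $T^{-1}$ shows $T^{-1}1_L=1_{\{p\}}$, so $p$ is isolated in $K$ and $\varphi^{-1}$ is continuous; thus $\varphi$ is a homeomorphism.

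Finally I assemble the two parts. In (a), neither $K$ nor $L$ has an isolated point, so $|S_L|=1$ is excluded; combined with $S_L\neq\varnothing$ this forces $|S_L|\ge 2$ and hence type~(i). In (b) both possibilities may occur and are recorded as types~(i) and~(ii). The main obstacle is the case $|S_L|=1$: one must show that all the differences $T^t\delta_s$ share the single common point $p$ (the mass computation on $\delta_{t_0}-\delta_s$) and that the resulting bijection $\varphi$ is genuinely a homeomorphism, for which the symmetric treatment of $T^{-1}$ is needed.
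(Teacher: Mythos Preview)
Your proof is correct and follows essentially the same strategy as the paper: analyse $T^t$ on extreme points, split according to $|S_L|$, invoke Proposition~\ref{morethan2} when $|S_L|\ge2$, and in the case $|S_L|=1$ use the extremality of $\delta_{t_0}-\delta_s$ to force $T^t\delta_s=\delta_p-\delta_{\varphi(s)}$. Your version is in fact slightly more careful in one respect---you explicitly rule out $S_L=\varnothing$ via $T1_K\neq0$, which the paper glosses over---while the paper obtains the homeomorphism more directly (continuous bijection from the compact space $L\setminus\{t\}$ to the Hausdorff space $K\setminus\{p\}$) without needing to run the argument symmetrically on $T^{-1}$.
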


\begin{proof} (a) Note that $L=S_L\cup S^c_L$. 
	We claim that, if $|S_L|=1$, then $L$ contains 
	an isolated point.
	Suppose that $S_L=\{t\} $  and,
	without loss of generality,  $T^t\delta_t= \delta_s$.   Then
	$$
	(T 1_K) (t) =\delta_t(T 1_K) = (T^t\delta_t)(1_K)= 
	\delta_s(1_K) =1 \,.
	$$
	For any $r\in S_L^c$, a similar calculation shows
	that $(T 1_K)(r) = 0$.
	As $S_L= (T 1_K)^{-1}(1)$ and 
	$S_L^c= (T 1_K)^{-1}(0)$ and $T1_K$ is continuous,
	it follows that $S_L$ and $S_L^c$ are disjoint
	closed sets.  Therefore $S_L=\{t\}$ is an isolated
	point of $L$.
	Therefore, if $L$ does not contain isolated points then $|S_L|\geq 2$ 
	and Proposition~\ref{morethan2} gives us that $T$ is canonical.

	(b) We only need to consider the case $|S_L|=1$ as otherwise 
	Proposition~\ref{morethan2} gives us that $T$ is canonical. Suppose $L$ contains an 
	isolated point $t$ and $K$ an isolated point
	$p$. For each $x$ in $C(K)$ the function $Tx$ as defined in (b) is continuous
	and the mapping $x\to Tx$ is easily seen to be an isometry. 
	
	Let us see that if $T$ is not canonical then this is the form that an isometry
	can take. By definition and the fact that $T$ is invertible we have that 
	$|S_K|=1$, where $S_K$ is the set of points $q$ in $K$
	for which $T^t (\delta_t) = \pm \delta_q$ for some $t\in L$.
	Let $S_L=\{t\}$ and $S_K=\{p\}$. Then $T^t(\delta_t)=\pm\delta_p$. 
	Let us suppose that $T^t(\delta_t)=\delta_p$. We claim that for each $s$ in 
	$S_L^c$ we have $T^t(\delta_s)=\delta_p-\delta_q$ for some $q$ in $K\setminus\{p
	\}$. Otherwise we have that $\delta_t-\delta_s$ is extreme but $T^t(\delta_t-
	\delta_s)=\delta_p-\delta_u+\delta_v$ is not. The mapping $T^t(\delta_s)=
	\delta_p-\delta_q$ now induces a bijection $\varphi\colon L\setminus\{t\}\to K
	\setminus\{p\}$ so that $T^t(\delta_s)=\delta_p-\delta_{\varphi(s)}$. Since the
	mapping $L\setminus\{p\}\to ({\cal M}(K),\sigma({\cal M}(K),{\cal C}(K)))$,
	$s\mapsto \delta_p-\delta_{\varphi(s)}$, is
	continuous, $\varphi$ will be continuous. As $\varphi$ is a continuous 
	bijection from the compact  space 
	$L\setminus \{t\}$ to the Hausdorff  space
	$K\setminus \{p\}$ it
	is a homeomorphism. Rewriting $s\mapsto \delta_p -
	\delta_{\varphi(s)}$
	in terms of $x$, we see that $(Tx)(s)=x(p)-x(\varphi(s))$.
	When $T^t(\delta_t)= -\delta_s$, we obtain
	$(Tx)(s)=x(\varphi(s))-x(p)$.
\end{proof}

Our characterisation of the isometries of $(C(L),\normcdot)$ onto $(C(K),\normcdot)$ allows us to construct isometries of $(\opoly{n}{C(K)}, \|\cdot\|_\infty)$ onto $(\opoly{n}{C(L)},\|\cdot\|_\infty)$. Given a homeomorphism 
$\varphi\colon K\to L$ we use $C_\varphi$ to denote the composition operator
$C_\varphi\colon C(L)\to C(K)$ defined by $C_\varphi(f)=f\circ\varphi$ for each
$f$ in $C(L)$. The transpose of the canonical isometry of $(C(K),\normcdot)$
onto $(C(L),\normcdot)$ determined by $\varphi$
now gives rise to the isometry
$T\colon (\opoly{n}{C(K)},\|\cdot\|_\infty)
\to (\opoly{n}{C(L),\|\cdot\|_\infty})$ 
given by $T(P)=P\circ C_\varphi$.

To understand the isometries from $(\opoly{n}{C(K)}, \|\cdot\|_\infty)$ to 
$(\opoly{n}{C(L)},\|\cdot\|_\infty)$ 
induced by non canonical isometries of
$(C(L),\normcdot)$ onto $(C(K),\normcdot)$ we note that if $K$ and $L$ have
isolated points $t$ and $p$ respectively then we have that $(C(K),\|\cdot
\|_\infty)$ is isometrically isomorphic to $(C(\{t\}),\|\cdot\|_\infty)\oplus_
\infty(C(K\setminus\{t\}),\|\cdot\|_\infty)$ while $(C(L),\|\cdot\|_\infty)$ is
isometrically isomorphic to $(C(\{p\}),\|\cdot\|_\infty)\oplus_\infty(C(L
\setminus\{p\}),\|\cdot\|_\infty)$. Hence, if $P$ is an $n$-homogeneous 
orthogonally additive polynomial on $(C(K),\|\cdot\|_\infty\|)$ then we can
write $P$ as $P=\lambda \delta_t^n+P_2$ where $P_2=P|_{C(K\setminus\{t\})}$. It 
follows that the transpose of each non canonical isometry
from  $(C(K),\normcdot)$
onto $(C(L),\normcdot)$ gives an isometry from $(\opoly{n}{C(K)},\|\cdot
\|_\infty)$ onto $(\opoly{n}{C(L)},\|\cdot\|_\infty)$ of the form
$$
T(P)=P(1)\,\delta_p^n-P_2\circ C_\varphi
$$
where  $\varphi$ is a homeomorphism of $K\setminus
\{t\}$ to $L\setminus\{p\}$.

In a similar manner, we can construct canonical
isometries from 
$(\opoly{n}{C(K)},\|\cdot\|_r)$ onto
$(\opoly{n}{C(L)},\|\cdot\|_r)$.

\section{Exposed points in $\opolyK{n}$ }

In this section we shall characterise the weak${}^*$ exposed and weak${}^*$
strongly exposed point of the unit
ball of $(C(K),\normcdot)'$. We have an upper bound for this set. We know that
it is contained in the set of extreme points of the unit ball of $(C(K),
\normcdot)'\cong ({\cal M}(K),\|\cdot\|_0)$ and that the set of extreme 
points of this set is equal to $\{\pm\delta_p,\delta_t-\delta_s:p, t,s\in K, t\neq s\}$. 

Let us begin with some definitions.

\begin{definition}
	Let $E$ be a Banach space. A point $x$ in the closed unit ball of $E$ is said 
	to be an 
	\emph{exposed point}  if there exists $\varphi \in E'$ with $\|\varphi\|=
	1$ such that
	\[
	\varphi(x) = 1 \text{ and } \varphi (y) < 1 \text{ for } y \in  
	\overline{B}_E \backslash \{x\}.
	\]
	If this is the case then we say that $\varphi$ \textit{exposes} $x$.
\end{definition}

\begin{definition}
	We say that $x$ is  a \emph{strongly exposed point} of 
	the closed unit ball of $E$ if there exists 
	 $\varphi \in E'$ such that
	\[
	\varphi(x) = 1
	\]
	and whenever $(x_n)_n$ is a sequence in $\overline{B}_E$ with 
	$
	\lim_{n \rightarrow \infty} \varphi(x_n)=1
	$
	then $(x_n)_n$ converges  to $x$ in norm.
	We will say that $\varphi$ \textit{strongly exposes} $x$.
\end{definition}

If $E=F'$ is a dual Banach space and the point $x\in E$ is exposed (respectively, strongly 
exposed) by $\varphi$ in $F$
we say that $x$ is a  \emph{weak${}^*$ exposed} 
(respectively, \emph{weak${}^*$ strongly exposed}) 
point of $E$
and that $\varphi$ \emph{weak${}^*$ exposes} 
(respectively, \emph{weak$^*$-strongly exposes})
the unit ball of $E$ at $x$.

We also observe that if each $\delta_t$, $t\in K$ and each $\delta_t-\delta_s$,
$t,s\in K$ with $t\not=s$ are of norm $1$ in $(C(K),\normcdot)'$. Hence, if $\delta_t$ is 
exposed by $x$ then we must have ${\rm diam}(x)<1$. Conversely, if 
$\delta_t-\delta_s$ is exposed by $x$ then we must have $\|x\|_\infty<1$.

We note that if $K$ is a compact Hausdorff topological space then a net 
$(t_\alpha)_\alpha$ converges to $t$ in $K$ if and only if $y(t_\alpha)$ converges 
to $y(t)$ for every $y$ in $C(K)$.

\subsection{G\^ateaux differentiability of the norm} 
We start with a 
characterisation of  G\^ateaux differentiability of the norm on $(C(K),\normcdot)$.
\begin{theorem}\label{gatsingle}
	Let $K$ be a compact Hausdorff topological space. Let $t\in K$, $x\in C(K)$
	with $\normc{x}=1$. Then the following are equivalent
	\begin{enumerate}
		\item[(a)] The norm of $(C(K),\normcdot)$ is G\^ateaux differentiable at $x$
		with derivative $\delta_t$.
		\item[(b)] 
		\begin{enumerate}
			\item[(i)] $\normc{x}=x(t)=1$ and ${\rm diam}(x)<1$.
			\item[(ii)] If $(t_n)_n$ is a sequence of points in $K$ such that $\lim_{n\to 
				\infty}x(t_n)=1$ then $(t_n)_n$ has a subnet, $(t_\alpha)_\alpha$ such that $(t_
			\alpha)_\alpha$ converges to $t$.
		\end{enumerate}
		\item[(c)] $t$ is the unique point in $K$ with $x(t)=1$ and ${\rm diam}(x)<1$.
	\end{enumerate}
\end{theorem}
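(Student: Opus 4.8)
The plan is to base everything on the classical convex-analytic criterion (the starting point of the {\v S}mul'yan theory invoked later in the paper): for a point $x$ on the unit sphere of a Banach space $E$, the norm is G\^ateaux differentiable at $x$ with derivative $f$ if and only if $f$ is the \emph{unique} norm-one functional in $E'$ with $f(x)=1$---equivalently, the set of supporting functionals $\{f\in E' : \|f\|=1,\ f(x)=1\}$ is the singleton $\{f\}$. Taking $E=(C(K),\normcdot)$ and $E'=(\MK,\|\cdot\|_0)$, statement (a) becomes the assertion that $\delta_t$ is the unique supporting functional of the unit ball at $x$. I would then prove the two equivalences (a)$\Leftrightarrow$(c) and (b)$\Leftrightarrow$(c), using (c) as the hub.

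For (a)$\Rightarrow$(c): that $\delta_t$ supports $x$ forces $x(t)=\normc{x}=1$ (and hence $\|x\|_\infty=1$), while its uniqueness forbids every other norm-one functional from attaining the value $1$ at $x$. Since $\|\delta_s\|_0=1$ for each $s$, no $\delta_s$ with $s\neq t$ may support $x$, so $t$ is the unique maximiser; and since $\|\delta_s-\delta_{s'}\|_0=1$ for $s\neq s'$, the value $x(s)-x(s')=1$ is forbidden, and as $\diam(x)$ is attained on the compact $K$ this gives $\diam(x)<1$. This is precisely (c). For (c)$\Rightarrow$(a) I would establish the uniqueness directly. Condition (c) yields $x(t)=1$ and $\diam(x)<1$, and, since $\normc{x}=1$, therefore $0<x\le 1$ with $\{x=1\}=\{t\}$. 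If $\mu$ is any supporting functional, then writing $\mu=\mu^+-\mu^-$ and using $\|\mu\|_0=\max\{\mu^+(K),\mu^-(K)\}=1$ gives
$$
1=\int_K x\,d\mu=\int_K x\,d\mu^+-\int_K x\,d\mu^-\le \mu^+(K)\le 1\,,
$$
so both inequalities are equalities: $\int_K x\,d\mu^-=0$ forces $\mu^-=0$ because $x>0$, while $\int_K x\,d\mu^+=\mu^+(K)=1$ together with $x\le 1$ forces $\mu^+$ to be concentrated on $\{x=1\}=\{t\}$, whence $\mu^+=\delta_t$. Thus $\mu=\delta_t$, so $\delta_t$ is the unique supporting functional and (a) holds.

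For (b)$\Leftrightarrow$(c) I would argue from the compactness of $K$. Condition (b)(i) merely restates $x(t)=\normc{x}=1$ and $\diam(x)<1$, so the only point is to match (b)(ii) with the uniqueness of the maximiser. Assuming (c): if $x(t_n)\to 1$, then by compactness a subnet $(t_\alpha)$ converges to some $s\in K$, and continuity gives $x(s)=\lim x(t_\alpha)=1$, so $s=t$; this is (b)(ii). Conversely, applying (b)(ii) to the constant sequence at any $s$ with $x(s)=1$ yields a subnet converging to $t$, and since $K$ is Hausdorff this forces $s=t$, giving uniqueness.

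The step I expect to require the most care is the direct uniqueness computation in (c)$\Rightarrow$(a). One must first extract, from $\diam(x)<1$ and $x(t)=1$, that $x$ is strictly positive and attains its maximum only at $t$; then the whole argument hinges on reading off both equality cases in the displayed chain of inequalities---one pinning down $\mu^-=0$ and the other concentrating $\mu^+$ at $t$. The remaining ingredients, namely the differentiability criterion and the two compactness arguments, are routine.
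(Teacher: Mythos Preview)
Your argument is correct, and the route differs from the paper's in one substantive place. Both proofs use the {\v S}mul'yan/convex-analytic criterion (G\^ateaux differentiability at $x$ with derivative $f$ $\Leftrightarrow$ $f$ is the unique unit supporting functional at $x$) to obtain (a)$\Rightarrow$(c), and both handle (c)$\Rightarrow$(b) and (b)$\Rightarrow$(c) by the same compactness reasoning. The difference is in closing the cycle. The paper never proves (c)$\Rightarrow$(a) directly; instead it establishes (b)$\Rightarrow$(a) by a hands-on contradiction: assuming the G\^ateaux limit fails along some sequence $(\lambda_n)$, it locates points $t_n$ at which $x+\lambda_n y$ attains its $\normcdot$-norm, uses $\diam(x)<1$ to force these to be points of $\|\cdot\|_\infty$-attainment, shows $x(t_n)\to 1$, and then derives $y(t_n)-y(t)\ge\varepsilon$, contradicting (b)(ii). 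Your (c)$\Rightarrow$(a) instead reads off the uniqueness of the supporting measure directly: from $0<x\le 1$ and $\{x=1\}=\{t\}$ you squeeze $1=\int x\,d\mu\le\mu^+(K)\le\|\mu\|_0=1$ and extract $\mu^-=0$, $\mu^+=\delta_t$ from the two equality cases. This is shorter and more conceptual, and it exploits the explicit description of the dual norm $\|\cdot\|_0$ rather than the definition of the G\^ateaux derivative; the paper's (b)$\Rightarrow$(a), by contrast, is self-contained at the level of the norm and would transplant more readily to settings where the supporting functionals are not so transparently described.
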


\begin{proof} First observe that {\v S}mul'yan \cite{Smulyan1,Smulyan2} 
	(see also \cite{DGZ}) showed that 
a point $x$ in $B_{C(K)}$ weak${}^*$ exposes the unit ball of 
$(C(K),\normcdot)'$ at $\delta_t$ if and only if the norm of $C(K)$ is
 G\^ateaux differentiable
at $x$ with derivative $\delta_t$. Hence we have that (a) implies (c).

Let us see that (c) implies (b). Clearly we have that (c) implies (b)~(i).

Suppose that (c) is true and that (b) part (ii) fails. Then there is a sequence 
$(t_n)_n$ in $K$ with $\lim_{n\to\infty}x(t_n)=x(t)=1$ but that for all subnets 
$(t_\alpha)_\alpha$ of $(t_n)_n$ there is $y$ in $C(K)$ such that $y(t_\alpha)\not
\to y(t)$. As $K$ is compact, we can choose a subnet $(t_\alpha)_\alpha$ of 
$(t_n)_n$ and $s$ in $K$ so that $\lim_{\alpha\to\infty}t_\alpha=s$. We claim that
$t\not=s$. Suppose $t=s$. Then for every $y$  
in $\CK$ we have that $\lim_\beta 
y(t_\beta)=y(t)$ contrary to what we have assumed. As $s\not=t$ and $x$ is 
continuous we have $x(s)=\lim_\beta x(t_\beta)=1$ which contradicts (c). Hence,
we see that (c) implies (b).

Next suppose that (b) is true and that (a) is false. Then we can find $y$ in 
$C(K)$, $\varepsilon>0$ and a sequence of positive numbers $(\lambda_n)_n$ 
converging to $0$ so that 
$$
\bigl|\normc{x+\lambda_ny}-\normc{x}-\lambda_n y(t)\bigr|\ge \varepsilon\lambda_n
$$
for every positive integer $n$.

Note that as $\normc{x+\lambda_n y}\ge(x+\lambda_n y)(t)$ we actually have that
$\normc{x+\lambda_ny}-\normc{x}-\lambda_n y(t)$
 is non-negative and therefore we 
have
$$
\normc{x+\lambda_ny}-\normc{x}-\lambda_n y(t)
\ge \lambda_n\varepsilon
$$
for every positive integer $n$.

Each of the functions $x+\lambda_n y$ attains 
its
norm either at a point of the form $\delta_t$ or at a point $\delta_u-\delta_v$.
As ${\rm diam}(x)<1$ choosing $n$ sufficiently large we can assume that
$x+\lambda_n y$ attains its norm at a point of the first type. Hence, for each
$n$ in $\mathbb{N}$, we can find $t_n$ in $K$ and $\beta_n=\pm 1$ so that
$$
\beta_n(x+\lambda_n y)(t_n)=\normc{x+\lambda_n y}\,.
$$
Then we have
\begin{align*}
1&= \normc{x} \ge  \beta_n x(t_n)=\beta_n(x+\lambda_n y)(t_n)-\beta_n\lambda_n y(t_n)\\
&\ge \normc{x+\lambda_n y}-|\lambda_n|\normc{y}.
\end{align*}
As $(\lambda_n)_n$ is a null sequence we have that $\normc{x+\lambda_n y}-
|\lambda_n|\normc{y}$ converges to $\normc{x}$ as $n$ tends to $\infty$. Hence we have 
that $\beta_nx(t_n)\to 1$. However, as ${\rm diam}(x)<1$, we have $x(t_n)>0$ for
all $n$. Hence, without loss of generality, we may assume that $\beta_n=1$ for 
all $n$ and therefore we have $\lim_{n\to\infty}x(t_n)=1$.

Then we have that
\begin{align*}
\varepsilon \lambda_n\le &\normc{x+\lambda_n y}-\normc{x}-\lambda_n y(t)\\
= &(x+\lambda_n y)(t_n)-x(t)-\lambda_n y(t)\\
= & x(t_n)-x(t)+\lambda_n\left( y(t_n)-y(t)\right)\\
\le & \lambda_n\left(y(t_n)-y(t)\right)\\
\end{align*}
However this means that there is no subnet
$(t_\alpha)$ of $(t_n)_n$ so that $y(t_\alpha)$ converges to $y(t)$ and so (b)~(ii)
is false.
\end{proof}

We recall that a function $x\in \CK$ is said to peak at a point
$t\in K$ if $t$ is the unique point at which $x$ attains its maximum.

\begin{lemma}\label{l:peak}
	Let $K$ be a compact Hausdorff topological space and $t\in K$. Then there is
	$x$ in $C(K)$ which peaks at $t$ if and only if $\{t\}$ is a $G_\delta$ subset 
	of $K$.
\end{lemma}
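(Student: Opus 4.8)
The plan is to prove the two implications separately, the forward one being an immediate consequence of continuity and the reverse an application of Urysohn's lemma combined with a summation trick. First I would handle the easy direction. Suppose $x\in\CK$ peaks at $t$, and normalise so that $M:=\max_{s\in K}x(s)=x(t)$ while $x(s)<M$ for every $s\neq t$. For each $n\in\N$ the super-level set $U_n:=\{s\in K: x(s)>M-1/n\}$ is open by continuity of $x$, and I claim $\{t\}=\bigcap_{n\in\N}U_n$: the point $t$ lies in every $U_n$, whereas any $s\neq t$ satisfies $x(s)<M$ and therefore drops out of $U_n$ as soon as $1/n\le M-x(s)$. Thus $\{t\}$ is a countable intersection of open sets, i.e. a $G_\delta$ set, and this direction requires no real work.

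For the converse, assume $\{t\}=\bigcap_{n\in\N}U_n$ with each $U_n$ open. Since $K$ is compact Hausdorff it is normal and its singletons are closed, so for each $n$ the disjoint closed sets $\{t\}$ and $K\setminus U_n$ can be separated by Urysohn's lemma: there is $g_n\in\CK$ with $0\le g_n\le 1$, $g_n(t)=0$, and $g_n\equiv 1$ on $K\setminus U_n$ (when $U_n=K$ simply take $g_n=0$). I would then set
\[
x=\sum_{n=1}^{\infty}2^{-n}g_n.
\]
The series converges uniformly by the Weierstrass $M$-test, so $x\in\CK$ with $0\le x\le 1$. We have $x(t)=0$, while for any $s\neq t$ the defining property $\{t\}=\bigcap_n U_n$ produces some $n_0$ with $s\notin U_{n_0}$, whence $g_{n_0}(s)=1$ and $x(s)\ge 2^{-n_0}>0=x(t)$. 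Hence $x$ attains its minimum uniquely at $t$, so $-x$ peaks at $t$, which finishes the proof.

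The only step needing genuine care --- the ``hard part,'' such as it is --- is that a \emph{single} continuous function must strictly separate $t$ from \emph{all} other points simultaneously. A finite combination of the $g_n$ would control only finitely many of the $U_n$ and could leave some $s\neq t$ tied with $t$; the geometric weighting $2^{-n}$ is exactly what secures both continuity (through uniform convergence) and the strict domination of every $s\neq t$ via whichever $U_{n_0}$ omits it.
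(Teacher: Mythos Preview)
Your proof is correct and follows essentially the same approach as the paper: super-level sets for the peaking $\Rightarrow$ $G_\delta$ direction, and a uniformly convergent weighted sum of Urysohn-type functions for the converse. The only cosmetic differences are the choice of weights ($2^{-n}$ versus the paper's $6/(\pi^2 n^2)$) and that you build a function with a unique minimum and then negate, whereas the paper builds the peaking function directly.
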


\begin{proof} We first suppose that $\{t\}$ is a $G_\delta$ subset of 
	$K$. Then we can find a sequence of open sets $(U_n)_n$ so that $\{t\}=\bigcap_
	{n=1}^\infty U_n$. As $K$ is compact and Hausdorff it is completely regular. 
	Hence, for each $n\in \mathbb{N}$ we can find a continuous function $x_n\colon 
	K\to [0,1]$ such that $x_n(t)=1$ and $x_n(U_n^c)=0$. Now let  $x\colon K\to 
	[0,1]$ be defined by 
	$$
	x(t)=\frac{6}{\pi^2}\sum_{n=1}^\infty \frac{1}{n^2}x_n(t).
	$$
	Then we have $x(t)=1$ and $x(s)<1$ for $s\in K$, $s\not=t$. So $x$ peaks at $t$.
	
	Conversely, if there is $x$ in $C(K)$ which peaks at $t$, for each $n\in \mathbb
	{N}$ let $U_n=\{s\in K:x(s)>1-\frac{1}{n}$. Then $\{t\}=\bigcap_{n=1}^\infty U_n
	$. As each $U_n$ is open, $\{t\}$ is a $G_\delta$ set.\end{proof}

The weak${}^*$ exposed points of the ball of the form $\delta_t$ are 
characterised by the following proposition.

\begin{proposition}\label{peak}
	Let $K$ be a compact Hausdorff topological space. 
	Then $\{\pm \delta_t:t\in K\}$ is contained in the set of weak${}^*$ exposed 
	points of the unit ball of $({\cal M}(K),\|\cdot\|_0)$ if and only 
	if $K$ is first countable.
\end{proposition}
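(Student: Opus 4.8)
The plan is to chain together three facts: the {\v S}mul'yan characterisation of weak${}^*$ exposed points as Gâteaux derivatives, the explicit differentiability criterion of Theorem~\ref{gatsingle}, and the peaking characterisation of Lemma~\ref{l:peak}; the remaining work is then a purely topological equivalence between the $G_\delta$ property of singletons and first countability. First I would fix $t\in K$ and observe, via {\v S}mul'yan together with Theorem~\ref{gatsingle}, that $\delta_t$ is a weak${}^*$ exposed point of the unit ball of $(\MK,\|\cdot\|_0)\cong (C(K),\normcdot)'$ precisely when there exists $x\in C(K)$ fulfilling condition~(c) of that theorem: $\normc{x}=1$, $x(t)=1$, $\diam(x)<1$, and $t$ is the only point with $x(t)=1$.

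The next step is to recognise condition~(c) as a peaking condition. If $x$ satisfies~(c) then $\diam(x)<1$ and $x(t)=1$ give $x(s)>x(t)-1=0$ for all $s$, so $x>0$ everywhere and hence $\|x\|_\infty=\normc{x}=1$; thus $x$ attains its maximum value $1$ only at $t$, i.e.\ $x$ peaks at $t$. Conversely, assuming $|K|\ge 2$ (the one-point case being trivial), any $x$ peaking at $t$ can be renormalised: putting $M=\max x$, $m=\min x$ and $z=\tfrac12\bigl(1+(x-m)(M-m)^{-1}\bigr)$ yields a function with values in $[\tfrac12,1]$, equal to $1$ only at $t$, and of diameter $\tfrac12<1$, so $z$ satisfies~(c). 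Hence $\delta_t$ is weak${}^*$ exposed if and only if some element of $C(K)$ peaks at $t$, which by Lemma~\ref{l:peak} happens if and only if $\{t\}$ is a $G_\delta$ subset of $K$. Because $\normcdot$ is even, applying the same reasoning to $-x$ shows $-\delta_t$ is weak${}^*$ exposed under the identical condition; therefore the whole set $\{\pm\delta_t:t\in K\}$ consists of weak${}^*$ exposed points exactly when every singleton of $K$ is $G_\delta$.

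It remains to establish that, for a compact Hausdorff space $K$, every singleton is $G_\delta$ if and only if $K$ is first countable, and I expect the nontrivial direction of this equivalence to be the main obstacle. That a countable neighbourhood base at $t$ forces $\{t\}$ to be $G_\delta$ is immediate from the Hausdorff property. For the converse, where compactness is indispensable, I would write $\{t\}=\bigcap_n U_n$ with $U_n$ open, use regularity to pick open $W_n$ with $t\in W_n\subseteq\overline{W_n}\subseteq U_n$, and set $V_n=W_1\cap\cdots\cap W_n$. The family $(V_n)_n$ is then a neighbourhood base at $t$: were some open $O\ni t$ to contain no $V_n$, the decreasing nonempty closed sets $\overline{V_n}\cap O^c$ would enjoy the finite intersection property and so, by compactness, intersect in a point, contradicting $\bigcap_n\overline{V_n}=\{t\}\subseteq O$. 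Combining this topological equivalence with the preceding reduction gives the proposition.
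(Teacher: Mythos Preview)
Your proof is correct and follows precisely the route the paper intends: the proposition is stated there without proof, as an immediate consequence of Theorem~\ref{gatsingle} and Lemma~\ref{l:peak}, and you have supplied the details the paper leaves to the reader. In particular, your reduction of condition~(c) to the existence of a peaking function (via the renormalisation $z$) and your explicit verification of the equivalence, for compact Hausdorff spaces, between first countability and every singleton being $G_\delta$ are exactly the missing links; the paper takes both for granted.
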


Just as we have characterised the  weak${}^*$ exposed points of the ball of 
the form $\delta_t$ we now characterise weak${}^*$ exposed points of the form
$\delta_t-\delta_s$.
Replacing $\delta_t$ with $\delta_t-\delta_s$ in Theorem~\ref{gatsingle} we
obtain the following result.

\begin{theorem}
	Let $K$ be a compact Hausdorff topological space. Let $t,s\in K$, $x\in C(K)$
	with $\normc{x}=1$. Then the following are equivalent
	\begin{enumerate}
		\item[(a)] The norm of $(C(K),\normcdot)$ is G\^ateaux differentiable at $x$
		with derivative $\delta_t-\delta_s$.
		\item[(b)] 
		\begin{enumerate}
			\item[(i)] $\normc{x}=x(t)-x(s)=1$ and $\|x\|_\infty<1$.
			\item[(ii)] If $(t_n)_n$ and $(s_n)_n$ are sequences of points in $K$ such that 
			$\lim_{n\to \infty}x(t_n)-x(s_n)=1$ then $(t_n)_n$ and $(s_n)$ have subnets
			$(t_\alpha)_\alpha$ and $(s_\alpha)_\alpha$ which converge to $t$ and $s$ 
			respectively.
		\end{enumerate}
		\item[(c)] $t,s$ is the unique pair of points in $K$ with $x(t)-x(s)=1$ and 
		$\|x\|_\infty<1$.
	\end{enumerate}
\end{theorem}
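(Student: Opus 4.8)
The plan is to mirror the proof of Theorem~\ref{gatsingle} almost line for line, replacing the functional $\delta_t$ by $\delta_t-\delta_s$ and interchanging the roles of the two pieces of the norm $\normc{x}=\max\{\|x\|_\infty,\rho(x)\}$: the condition $\diam(x)<1$ used there becomes $\|x\|_\infty<1$ here. I would establish the cycle (a)$\Rightarrow$(c)$\Rightarrow$(b)$\Rightarrow$(a). As in Theorem~\ref{gatsingle}, the pivot is {\v S}mul'yan's theorem \cite{Smulyan1,Smulyan2}: (a) holds if and only if $x$ weak${}^*$ exposes the unit ball of $(C(K),\normcdot)'\cong(\MK,\|\cdot\|_0)$ at $\delta_t-\delta_s$, that is, $\delta_t-\delta_s$ is the \emph{unique} point of that ball with $\langle \delta_t-\delta_s,x\rangle=\normc{x}=1$.

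For (a)$\Rightarrow$(c), once $\delta_t-\delta_s$ is known to be the unique norming functional, I read off both parts of (c). First, $\langle\delta_t-\delta_s,x\rangle=1$ forces $x(t)-x(s)=1$. Next, if $\|x\|_\infty=1$ then $|x(u)|=1$ at some $u\in K$ (attained by compactness), so one of $\pm\delta_u$ would also norm $x$; since $\pm\delta_u$ has total mass $\pm1$ while $\delta_t-\delta_s$ has mass $0$, this is a genuinely different norming functional, contradicting uniqueness, whence $\|x\|_\infty<1$. Finally, any further pair $(u,v)$ with $x(u)-x(v)=1$ would give a second norming extreme point $\delta_u-\delta_v$, so $(t,s)$ is unique. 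The step (c)$\Rightarrow$(b)(i) is immediate, and for (c)$\Rightarrow$(b)(ii) I argue by contradiction as before: given sequences with $x(t_n)-x(s_n)\to1$ but admitting no subnets converging to $t$ and $s$, compactness of $K\times K$ yields a subnet with $(t_\alpha,s_\alpha)\to(t',s')$; continuity gives $x(t')-x(s')=1$ with $\|x\|_\infty<1$, and since $x(t')>x(s')$ forces $t'\neq s'$, the uniqueness in (c) gives $(t',s')=(t,s)$, contradicting the assumed failure.

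The substantive implication is (b)$\Rightarrow$(a), which I prove by contradiction. Assuming (a) fails, there are $y\in C(K)$, $\varepsilon>0$ and $\lambda_n\downarrow0$ with $\normc{x+\lambda_n y}-1-\lambda_n\bigl(y(t)-y(s)\bigr)\ge\varepsilon\lambda_n$, the nonnegativity of the left side coming from $\normc{x+\lambda_n y}\ge\langle\delta_t-\delta_s,x+\lambda_n y\rangle$. The hypothesis $\|x\|_\infty<1$ now does the real work: since $\|x+\lambda_n y\|_\infty\to\|x\|_\infty<1$ while $\normc{x+\lambda_n y}\to1$, for large $n$ the norm $\max\{\|\cdot\|_\infty,\rho(\cdot)\}$ must be realised through its diameter part, so I can select $u_n,v_n\in K$ with $(x+\lambda_n y)(u_n)-(x+\lambda_n y)(v_n)=\normc{x+\lambda_n y}$. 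A short estimate using $|y(u_n)-y(v_n)|\le\rho(y)\le\normc{y}$ shows $x(u_n)-x(v_n)\to1$, so (b)(ii) supplies subnets $u_\alpha\to t$ and $v_\alpha\to s$. Substituting the attaining pair into the displayed inequality and cancelling yields $\bigl(y(u_n)-y(v_n)\bigr)-\bigl(y(t)-y(s)\bigr)\ge\varepsilon$ for every $n$, which along the subnet contradicts the continuity of $y$.

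I expect the main obstacle to be the step that forces $\normc{x+\lambda_n y}$ to be attained on a diameter-type extreme point $\delta_{u_n}-\delta_{v_n}$ rather than on some $\pm\delta_{u_n}$: this requires a careful comparison of $\|x+\lambda_n y\|_\infty$ with $\normc{x+\lambda_n y}$ from $\|x\|_\infty<1$, together with a choice of orientation of the pair $(u_n,v_n)$ so that the sign-symmetric diameter is realised as a positive difference. The twin bookkeeping of the two sequences $(u_n)$ and $(v_n)$, and their simultaneous subnet extraction through compactness of $K\times K$, is the only genuinely new feature relative to the single-point Theorem~\ref{gatsingle}; the remainder is a faithful transcription of that argument.
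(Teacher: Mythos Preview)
Your proposal is correct and matches the paper's intended argument exactly: the paper does not even write out a proof for this theorem, stating only that it is obtained by ``replacing $\delta_t$ with $\delta_t-\delta_s$ in Theorem~\ref{gatsingle}'', which is precisely the transcription you carry out. Your identification of the key swap --- the hypothesis $\diam(x)<1$ becoming $\|x\|_\infty<1$, so that for small $\lambda_n$ the norm $\normc{x+\lambda_n y}$ is attained on a diameter pair rather than at a single point --- and your use of compactness of $K\times K$ for the joint subnet extraction are exactly the adaptations required.
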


As the proof  of the following lemma is similar to that of Lemma~\ref{l:peak}
we omit it.\
\begin{lemma}
	Let $K$ be a compact Hausdorff topological space and $t,s\in K$ with $t\not=s$. 
	Then there is $x$ in $C(K)$ such that $x(t)=\frac{1}{2}$, $x(s)=-\frac{1}{2}$ 
	and $-\frac{1}{2}<x(u)<\frac{1}{2}$ for $u\in K\setminus\{t,s\}$ if and only
	if $\{t\}$ and $\{s\}$ are $G_\delta$ sets.
\end{lemma}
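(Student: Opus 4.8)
The plan is to follow the two directions of Lemma~\ref{l:peak} almost verbatim, the only genuinely new ingredient being the need to combine a function peaking at $t$ with one peaking at $s$ while controlling their values at the \emph{opposite} distinguished point. I would treat the (easier) necessity direction first and then the construction.

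For necessity, suppose $x\in C(K)$ satisfies $x(t)=\tfrac12$, $x(s)=-\tfrac12$, and $-\tfrac12<x(u)<\tfrac12$ for $u\in K\setminus\{t,s\}$. Since $x(u)<\tfrac12$ for all $u\neq t$ and $x(s)=-\tfrac12<\tfrac12$, the value $\tfrac12$ is attained only at $t$, and symmetrically $-\tfrac12$ only at $s$. Setting $U_n=\{u\in K: x(u)>\tfrac12-\tfrac1n\}$, each $U_n$ is open by continuity of $x$, and $\bigcap_{n=1}^\infty U_n=\{u: x(u)\ge \tfrac12\}=\{t\}$, so $\{t\}$ is a $G_\delta$ set. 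Likewise $V_n=\{u\in K: x(u)<-\tfrac12+\tfrac1n\}$ gives $\bigcap_{n=1}^\infty V_n=\{s\}$, so $\{s\}$ is a $G_\delta$ set.

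For sufficiency, assume $\{t\}$ and $\{s\}$ are $G_\delta$. By Lemma~\ref{l:peak} there are $f,g\in C(K)$ with values in $[0,1]$ peaking at $t$ and at $s$ respectively, so $f(t)=1$ with $f(u)<1$ for $u\neq t$, and $g(s)=1$ with $g(u)<1$ for $u\neq s$. Since $K$ is compact Hausdorff, hence normal, Urysohn's lemma supplies $h,k\colon K\to[0,1]$ with $h(t)=1,\ h(s)=0$ and $k(s)=1,\ k(t)=0$. Put $f_1=fh$ and $g_1=gk$. Then $f_1$ still peaks at $t$, since $f_1(t)=1$ and $f_1(u)=f(u)h(u)\le f(u)<1$ for $u\neq t$, and moreover $f_1(s)=f(s)\cdot 0=0$; symmetrically $g_1$ peaks at $s$ with $g_1(t)=0$. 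I then define $x=\tfrac12(f_1-g_1)$. A direct check gives $x(t)=\tfrac12(1-0)=\tfrac12$ and $x(s)=\tfrac12(0-1)=-\tfrac12$, while for $u\neq t,s$ one has $0\le f_1(u)<1$ and $0\le g_1(u)<1$, whence $f_1(u)-g_1(u)\le f_1(u)<1$ and $f_1(u)-g_1(u)\ge -g_1(u)>-1$, so $-\tfrac12<x(u)<\tfrac12$, as required.

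The step I would be most careful about is exactly the one that forces the target values: the raw peaking functions of Lemma~\ref{l:peak} need not vanish at the opposite distinguished point, so that $x(t)$ and $x(s)$ might fail to equal $\pm\tfrac12$. Multiplying $f$ and $g$ by the Urysohn functions $h$ and $k$ both preserves the peaking property (a product of $[0,1]$-valued functions is $<1$ wherever either factor is) and annihilates the value at the other point, which is precisely what pins the extreme values to $\pm\tfrac12$. Once this is arranged, the remaining estimates are the routine strict-inequality verifications displayed above, and no further difficulty is expected.
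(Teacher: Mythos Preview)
Your proof is correct and is precisely the kind of adaptation of Lemma~\ref{l:peak} the paper has in mind (the paper omits the proof, merely noting it is similar to that lemma). One minor simplification: rather than multiplying by Urysohn functions after the fact, you could absorb this step into the construction of Lemma~\ref{l:peak} itself by first choosing disjoint open neighbourhoods of $t$ and $s$ and intersecting the $G_\delta$ witnesses $U_n$ for $\{t\}$ with the neighbourhood of $t$ (and likewise for $s$), so that the resulting peaking function $f$ already satisfies $f(s)=0$; but your approach via $f_1=fh$ is equally valid and arguably more modular.
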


The weak${}^*$ exposed points of the ball of the form $\delta_t-\delta_s$ are now
characterised by the following proposition.

\begin{proposition}\label{2peak}
	Let $K$ be a compact Hausdorff topological space and $n$ be an even integer. 
	Then $\{\delta_t-\delta_s:t,s
	\in K, t\neq s\}$ is contained in the set of weak${}^*$ exposed points of the unit ball 
	of $({\cal M}(K),\|\cdot\|_0)$ if and only if $K$ is first countable.
\end{proposition}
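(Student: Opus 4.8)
The plan is to mirror the proof of Proposition~\ref{peak}, replacing the G\^ateaux differentiability criterion of Theorem~\ref{gatsingle} by its analogue for the derivative $\delta_t-\delta_s$ (the theorem stated above) and replacing Lemma~\ref{l:peak} by the lemma immediately preceding this proposition. The bridge, as before, is {\v S}mul'yan's theorem \cite{Smulyan1,Smulyan2}: the point $\delta_t-\delta_s$ is a weak${}^*$ exposed point of the unit ball of $(\MK,\|\cdot\|_0)\cong(C(K),\normcdot)'$ if and only if the norm $\normcdot$ is G\^ateaux differentiable at some $x$ with $\normc{x}=1$ and derivative $\delta_t-\delta_s$, which by that theorem is equivalent to the existence of an $x$ satisfying condition~(c): $\normc{x}=x(t)-x(s)=1$, $\|x\|_\infty<1$, and $(t,s)$ is the \emph{unique} pair with $x(t)-x(s)=1$. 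I will also use the standard fact that a compact Hausdorff space is first countable if and only if every singleton $\{u\}$ is a $G_\delta$ set, which lets me translate between the topological hypothesis and the $G_\delta$ conditions appearing in the lemmas.

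For the ``if'' direction, suppose $K$ is first countable and fix distinct $t,s\in K$. Then $\{t\}$ and $\{s\}$ are $G_\delta$, so the preceding lemma furnishes $x\in C(K)$ with $x(t)=\frac{1}{2}$, $x(s)=-\frac{1}{2}$, and $-\frac{1}{2}<x(u)<\frac{1}{2}$ for all other $u$. I then verify condition~(c): here $\|x\|_\infty=\frac{1}{2}<1$ and $\diam(x)=1$, so $\normc{x}=\max\{\|x\|_\infty,\diam(x)\}=1=x(t)-x(s)$, while the strict inequalities force $t$ to be the only maximiser and $s$ the only minimiser, making $(t,s)$ the unique admissible pair. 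Hence $\delta_t-\delta_s$ is weak${}^*$ exposed, and since $t\neq s$ were arbitrary the whole family is.

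For the ``only if'' direction, assume every $\delta_t-\delta_s$ with $t\neq s$ is weak${}^*$ exposed; the goal is that every singleton is $G_\delta$ (the one-point case being trivial). Fix $t$, pick any $s\neq t$, and let $x$ be a function realising condition~(c) for $\delta_t-\delta_s$. Since $\|x\|_\infty<1$ while $\normc{x}=1$, the norm is attained through the diameter, so $\diam(x)=\max x-\min x=1$, and $x(t)-x(s)=1$ forces $x(t)=\max x$ and $x(s)=\min x$. The uniqueness clause then promotes these to \emph{strict} extrema: a second maximiser $t'\neq t$ would yield the distinct admissible pair $(t',s)$, contradicting uniqueness, and likewise for the minimum. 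Thus $x$ peaks at $t$ and $-x$ peaks at $s$, so Lemma~\ref{l:peak} shows that $\{t\}$ and $\{s\}$ are $G_\delta$. As $t$ was arbitrary, $K$ is first countable.

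The step I expect to require the most care is the last one: extracting from the single uniqueness clause of~(c) that $t$ is the \emph{unique} maximiser (not merely a maximiser), since only a genuinely peaking function triggers Lemma~\ref{l:peak}. Everything else is routine verification. I note that the hypothesis that $n$ is even plays no role in the argument itself, which lives entirely in $(\MK,\|\cdot\|_0)$; it enters only through the identification $(\opolyK{n},\|\cdot\|_\infty)\cong(\MK,\|\cdot\|_0)$, under which the measures $\delta_t-\delta_s$ are genuine extreme points of the ball and the result transfers to exposed points of the polynomial space.
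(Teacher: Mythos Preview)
Your proposal is correct and follows exactly the route the paper intends: the paper omits an explicit proof of Proposition~\ref{2peak}, but the surrounding text makes clear that it is meant to be derived from the $\delta_t-\delta_s$ analogue of Theorem~\ref{gatsingle} together with the lemma on the existence of a two-point peaking function, via {\v S}mul'yan's characterisation, in direct parallel with Proposition~\ref{peak}. Your verification of condition~(c) in both directions, including the extraction of unique maximiser/minimiser from the uniqueness of the pair, is precisely the argument needed to complete the omitted proof.
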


Propositions~\ref{peak} and \ref{2peak} can be rephrased in terms of 
spaces of orthogonally additive polynomials.
Since we have canonically identified the space $\opoly{n}{\CK}$   with the space $\MK$,
we may transfer the weak$^*$ topology on $\MK
= \CK'$ to the space $\opoly{n}{\CK}$.
References to the weak$^*$ topology on $\opoly{n}{\CK}$
should be understood in this sense. 
It is easy to see that this is the topology of
pointwise convergence on $\opoly{n}{\CK}$.

\begin{proposition}\label{peakpoly}
	Let $K$ be a compact Hausdorff topological space and $n$ be an even integer. 
	Then $\{\pm\delta_p^n,\delta_t^n-\delta_s^n:p, t,s\in K, t\neq s\}$
	is equal to the set of weak$^*$ exposed points of the unit ball of
	$(\opoly{n}{\CK},\|\cdot\|_\infty)$ if and only if $K$
	is first countable.
\end{proposition}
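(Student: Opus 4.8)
The plan is to deduce the statement from the two preceding propositions by transporting everything through the isometric isomorphism $J_n$. First I would record that, by Theorem~\ref{p:isomorphism}(c), $J_n$ is an isometric isomorphism from $(\MK,\|\cdot\|_0)$ onto $(\opoly{n}{\CK},\|\cdot\|_\infty)$, and that the weak$^*$ topology on $\opoly{n}{\CK}$ is \emph{by definition} the topology transported from the weak$^*$ topology of $\MK=\CK'$ along $J_n$. Thus $J_n$ is a weak$^*$-to-weak$^*$ homeomorphism which intertwines the two predual pairings with $(C(K),\normcdot)$, the pairing on the target being $\langle J_n\mu,x\rangle=\int_K x\,d\mu$. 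Under $J_n$ we have $\delta_t\mapsto\delta_t^n$ and $\delta_s-\delta_t\mapsto\delta_s^n-\delta_t^n$.

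Next I would use the elementary fact that every exposed point is extreme, so that every weak$^*$ exposed point of the unit ball of $(\opoly{n}{\CK},\|\cdot\|_\infty)$ lies among its extreme points. By Corollary~\ref{p:extremeSup}(b) these extreme points are exactly $\{\pm\delta_p^n,\ \delta_t^n-\delta_s^n : p,t,s\in K,\ t\neq s\}$ (equivalently, by Theorem~\ref{p:extremeMK}, the extreme points of the unit ball of $(\MK,\|\cdot\|_0)$ are $\{\pm\delta_p,\ \delta_t-\delta_s\}$). Hence the set of weak$^*$ exposed points is \emph{always} contained in the displayed set, and the proposition reduces to determining exactly when every such extreme point is weak$^*$ exposed.

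For the forward implication I would assume $K$ is first countable and invoke Proposition~\ref{peak} to conclude that every $\pm\delta_t$ is weak$^*$ exposed in the unit ball of $(\MK,\|\cdot\|_0)$, together with Proposition~\ref{2peak} to conclude the same for every $\delta_t-\delta_s$ with $t\neq s$. Transporting along $J_n$ then makes every element of $\{\pm\delta_p^n,\ \delta_t^n-\delta_s^n\}$ weak$^*$ exposed, and, combined with the containment of the previous paragraph, this yields the asserted equality. For the converse I would assume that the set of weak$^*$ exposed points equals $\{\pm\delta_p^n,\ \delta_t^n-\delta_s^n\}$; in particular each $\pm\delta_p^n$ is then weak$^*$ exposed, so transporting back along $J_n^{-1}$ makes each $\pm\delta_p$ weak$^*$ exposed in the unit ball of $(\MK,\|\cdot\|_0)$, whence $K$ is first countable by the ``only if'' direction of Proposition~\ref{peak}.

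The genuine content is already carried by Propositions~\ref{peak} and~\ref{2peak}, so what remains is essentially bookkeeping. The one step that I expect to require care is the transfer of \emph{weak}$^*$ \emph{exposedness} through $J_n$, since this notion depends on the chosen predual rather than merely on the dual space or its weak$^*$ topology. I would settle it by noting that $J_n$ and $J_n^{-1}$ are isometries intertwining the two predual pairings with $(C(K),\normcdot)$, so a predual functional $x$ together with its exposing inequality ``$\langle y,x\rangle<1$ for all other $y$ in the ball'' transfers verbatim between the two settings.
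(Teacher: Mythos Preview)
Your proposal is correct and follows exactly the route the paper intends: the paper gives no separate proof for this proposition, stating only that ``Propositions~\ref{peak} and \ref{2peak} can be rephrased in terms of spaces of orthogonally additive polynomials,'' and your argument is precisely that rephrasing spelled out, together with the observation (also made in the paper's preamble to that subsection) that weak$^*$ exposed points lie among the extreme points of Theorem~\ref{p:extremeMK}. Your care about transferring weak$^*$ exposedness through $J_n$ is well placed and matches the paper's explicit remark that the weak$^*$ topology on $\opoly{n}{\CK}$ is by definition the one transported from $\MK=\CK'$.
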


\subsection{Fr\'echet differentibility of the norm}

We now characterise Fr\'echet differentiability of the norm on $(C(K),\normcdot)$.

\begin{theorem}\label{Frechet}
	Let $K$ be a compact Hausdorff topological space. Let $t\in K$, $x\in C(K)$ 
	with $\normc{x}=1$. Then the following are equivalent.
	\begin{enumerate}
		\item[(a)] The norm of $(C(K),\normcdot)$ is Fr\'echet differentiable at $x$
		with derivative $\delta_t$.
		\item[(b)] 
		\begin{enumerate}
			\item[(i)] $\normc{x}=x(t)=1$ and ${\rm diam}(x)<1$.
			\item[(ii)] If $(t_n)_n$ is a sequence of points in $K$ such that $\lim_{n\to 
				\infty}x(t_n)=1$ then $(t_n)_n$ is eventually equal to $t$.
		\end{enumerate}
		\item[(c)]$x$ weak${}^*$ strongly exposes the unit ball of
		 $(C(K),\normcdot)'$ at $\delta_t$.
	\end{enumerate}
\end{theorem}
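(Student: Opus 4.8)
The plan is to follow the pattern already used for the Gâteaux theorem, Theorem~\ref{gatsingle}, replacing the weak$^*$ exposedness half of Šmul'yan's lemma by its strong-exposedness counterpart. First I would record that, by the Fréchet version of Šmul'yan's lemma \cite{Smulyan1,Smulyan2} (see also \cite{DGZ}), a point $x$ in the unit ball of $(C(K),\normcdot)$ weak$^*$ strongly exposes the dual ball at $\delta_t$ precisely when the norm of $(C(K),\normcdot)$ is Fréchet differentiable at $x$ with derivative $\delta_t$; this gives (a)$\Leftrightarrow$(c) with no further work. It then remains to prove (c)$\Rightarrow$(b) and (b)$\Rightarrow$(a).

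For (c)$\Rightarrow$(b) I would assume $x$ weak$^*$ strongly exposes at $\delta_t$. Since $\delta_t(x)=\normc{x}=1$ we get $x(t)=1$, and as $\|x\|_\infty\le\normc{x}=1$ this forces $\|x\|_\infty=x(t)=1$. If $\diam(x)=1$ (it cannot exceed $1$, since $\rho(x)\le\normc{x}$), then picking $u,v$ with $x(u)-x(v)=1$ produces a unit vector $\delta_u-\delta_v$ of $(\MK,\|\cdot\|_0)$, distinct from $\delta_t$, with $(\delta_u-\delta_v)(x)=1$; as strong exposure makes $\delta_t$ the unique maximiser, this is a contradiction, so $\diam(x)<1$, giving (b)(i). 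For (b)(ii), if $x(t_n)\to1$ then the point masses $\delta_{t_n}$ lie in the unit ball and satisfy $\delta_{t_n}(x)\to1$, so strong exposure yields $\|\delta_{t_n}-\delta_t\|_0\to0$. The crucial observation is that $\|\delta_{t_n}-\delta_t\|_0$ equals $1$ when $t_n\neq t$ and $0$ when $t_n=t$; hence convergence to $0$ forces $t_n=t$ eventually, which is exactly (b)(ii).

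For the remaining implication (b)$\Rightarrow$(a) I would argue by contradiction, adapting the perturbation argument of Theorem~\ref{gatsingle} but allowing an arbitrary null perturbation in place of a fixed direction. Suppose the norm is not Fréchet differentiable at $x$ with derivative $\delta_t$; then there is $\varepsilon>0$ (which we may take $<1$) and a sequence $h_n\neq0$ with $\normc{h_n}\to0$ and $\normc{x+h_n}-1-h_n(t)\ge\varepsilon\,\normc{h_n}$. Because $\diam(x)<1$ and $x(t)=1$, the function $x$ is bounded below by the positive constant $1-\diam(x)$, so for large $n$ one has $x+h_n>0$; moreover $\rho(x+h_n)\to\diam(x)<1$ while $\|x+h_n\|_\infty\to1$, so $\normc{x+h_n}=\|x+h_n\|_\infty$ is attained at some $t_n$ with $(x+h_n)(t_n)=\normc{x+h_n}$. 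Substituting and using $x(t_n)\le1$ gives
$$
\varepsilon\,\normc{h_n}+\bigl(1-x(t_n)\bigr)\le h_n(t_n)-h_n(t)\le\rho(h_n)\le\normc{h_n}\,,
$$
whence $1-x(t_n)\le(1-\varepsilon)\normc{h_n}\to0$, i.e. $x(t_n)\to1$. By hypothesis (b)(ii) the sequence $(t_n)$ is eventually equal to $t$, so $h_n(t_n)-h_n(t)=0$ for large $n$; the displayed inequality then forces $\varepsilon\,\normc{h_n}\le0$, contradicting $h_n\neq0$. This establishes (b)$\Rightarrow$(a) and closes the cycle. I expect this last implication to be the main obstacle: the whole weight of the argument lies in extracting the maximising points $t_n$, verifying that $x(t_n)\to1$, and then feeding them into the sharper condition (b)(ii). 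It is precisely the strengthening from subnet convergence (Gâteaux) to eventual equality (Fréchet) that makes the final contradiction go through.
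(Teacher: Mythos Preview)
Your proof is correct and follows essentially the same route as the paper: the equivalence (a)$\Leftrightarrow$(c) via the Fr\'echet {\v S}mul'yan lemma, the implication (c)$\Rightarrow$(b)(ii) via the observation that $\|\delta_{t_n}-\delta_t\|_0=1$ whenever $t_n\neq t$, and (b)$\Rightarrow$(a) by the perturbation argument of Theorem~\ref{gatsingle}. The only presentational differences are that the paper obtains (b)(i) by noting that Fr\'echet implies G\^ateaux and then invoking Theorem~\ref{gatsingle} directly, whereas you argue it from (c) by hand, and that the paper leaves (b)$\Rightarrow$(a) as ``similar to Theorem~\ref{gatsingle}'' while you (correctly) spell out the genuine Fr\'echet version with arbitrary null perturbations $h_n$ in place of the single-direction perturbations $\lambda_n y$.
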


\begin{proof} First observe that {\v S}mul'yan \cite{Smulyan1,Smulyan2} (see also \cite{DGZ})  showed that 
a point $x$ in $B_{C(K)}$ weak${}^*$ strongly exposes the unit ball of $(C(K),
\normcdot)'$ at $\delta_t$ if and only if the norm of $C(K)$ is Fr\'echet 
differentiable at $x$ with derivative $\delta_t$. Thus (a) and (c) are 
equivalent.

If the norm of $(C(K),\normcdot)$ is Fr\'echet differentiable at $x$ with 
derivative $\delta_t$ then it is G\^ateaux differentiable at $x$ with 
derivative $\delta_t$. Theorem~\ref{gatsingle} now implies that (b)~(i) holds. 

Suppose that (c) is true. Then $x$ in $B_{C(K)}$ weak${}^*$ strongly exposes 
the unit ball of $(C(K),\normcdot)'$ at $\delta_t$. If $\lim_{n\to \infty}x(t_n)
=1$ then $\lim_{n\to\infty}\delta_{t_n}(x)=\delta_t(x)=1$. As 
$f$ weak${}^*$-strongly exposes the unit ball of $(C(K),\normcdot)'$ at 
$\delta_t$ we have that $\lim_{n\to\infty}\delta_{t_n}=\delta_t$ in norm. However, 
as 
$\|\delta_u-\delta_v\|_0=1$ whenever $u\not= v$ we see that only way we can have
$(\delta_{t_n})_n$ converge to $\delta_t$ is that the sequence $(t_n)_n$ is
eventually equal to $t$.

The implication (b) implies (a) is similar to the corresponding part of the
proof of 
Theorem~\ref{gatsingle}  where instead of using the fact that $(t_n)_n$ has a
subsequence that converges to $t$ we use the fact that $(t_n)_n$ has a
subsequence so that it is eventually equal to $t$.\end{proof} 

\begin{corollary}
	Let $K$ be a compact Hausdorff topological space and $t\in K$. Then $\delta_t$
	is a weak${}^*$ strongly exposed point of the unit ball of $(C(K),\normcdot)'$ if and only if $t$ is an isolated point of $K$.
\end{corollary}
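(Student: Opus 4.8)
The plan is to read off the result from the Fr\'echet characterisation in Theorem~\ref{Frechet}. By definition, $\delta_t$ is weak$^*$ strongly exposed exactly when there is some $x$ with $\normc{x}=1$ that weak$^*$ strongly exposes the unit ball of $(C(K),\normcdot)'$ at $\delta_t$. Invoking the equivalence of (c) and (b) in Theorem~\ref{Frechet}, this happens if and only if there exists $x\in\CK$ satisfying $\normc{x}=x(t)=1$, $\diam(x)<1$, together with the condition that every sequence $(t_n)$ with $x(t_n)\to 1$ is eventually equal to $t$. So the whole statement reduces to showing that such a peaking function $x$ exists precisely when $\{t\}$ is open.

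For sufficiency, suppose $t$ is isolated, so that $\{t\}$ is clopen and the indicator $1_{\{t\}}$ lies in $\CK$. First I would set $x=\tfrac12\,1_K+\tfrac12\,1_{\{t\}}$, so that $x(t)=1$ and $x(s)=\tfrac12$ for $s\neq t$. Then $\|x\|_\infty=1$ and $\diam(x)=\tfrac12<1$, so by (\ref{e:predual}) we get $\normc{x}=1$ and condition (b)(i) holds. Since $x$ takes only the two values $\tfrac12$ and $1$, any sequence with $x(t_n)\to 1$ must satisfy $x(t_n)=1$, hence $t_n=t$, for all large $n$; this is exactly (b)(ii). Theorem~\ref{Frechet} then yields that $x$ weak$^*$ strongly exposes the ball at $\delta_t$, so $\delta_t$ is weak$^*$ strongly exposed.

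For necessity, suppose $\delta_t$ is weak$^*$ strongly exposed, so a function $x$ satisfying (b) exists, and I would argue by contraposition: assuming $t$ is not isolated, I produce a sequence violating (b)(ii). For each $n$ consider the open set $U_n=\{s\in K: x(s)>1-\tfrac1n\}$, which contains $t$ because $x(t)=1$. If some $U_n$ equalled $\{t\}$, then $\{t\}$ would be open and $t$ isolated; so non-isolation forces $U_n\setminus\{t\}\neq\varnothing$ for every $n$, and I can choose $t_n\in U_n$ with $t_n\neq t$. Then $x(t_n)\to 1$ while no $t_n$ equals $t$, contradicting (b)(ii). Hence $t$ is isolated. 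The one point to watch is this final step: because the construction only needs a single sequence drawn from the nested open sets $U_n$, no first countability of $K$ is required --- non-isolation alone guarantees that each $U_n$ is strictly larger than $\{t\}$, which is all the argument uses.
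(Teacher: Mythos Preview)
Your proof is correct and the sufficiency direction is essentially identical to the paper's: both use the function that equals $1$ at $t$ and $\tfrac12$ elsewhere and then invoke Theorem~\ref{Frechet}.

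For the converse, however, your route is slightly different and in fact more careful. The paper, assuming $t$ is not isolated, simply picks a sequence $(t_n)$ with $t_n\neq t$ converging to $t$ and uses continuity of $x$ to get $x(t_n)\to 1$; but the existence of such a sequence is not guaranteed in an arbitrary compact Hausdorff space without some countability hypothesis. Your argument bypasses this by working directly with the level sets $U_n=\{s:x(s)>1-\tfrac1n\}$ of the strongly exposing function itself: non-isolation forces each $U_n$ to contain a point $t_n\neq t$, and these $t_n$ witness the failure of (b)(ii) without any need for convergence in $K$. As you note, this makes the necessity direction valid with no first-countability assumption, which is what the statement requires.
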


\begin{proof} If $t$ is an isolated point of $K$ then the function 
given by
$$
x(s)=\begin{cases} 1, & s=t\\
1/2 &\hbox{otherwise}\\
\end{cases}
$$
is continuous on $K$. Moreover, if $x(t_n)\to 1$ then $(t_n)_n$ is eventually
equal to $t$. 

Conversely, if $t$ is not an isolated point of $K$. Choose a sequence of points 
$(t_n)_n$ with $t_n\not=t$, all $n$, so that $t_n$ converges to $t$. Let $x$ 
be any function in $C(K)$ with $\normc{x}=1$ and $x(t)=1$. Then we have that 
$x(t_n)\to x(t)=1$. 
However, as $(t_n)_n$ is not eventually equal to $t$ we see that condition 
(b)~(ii) of Theorem~\ref{Frechet} is 
not satisfied and therefore no $x$ in $C(K)$ with $\normc{x}=1$ can expose the 
unit ball $(C(K),\normcdot)'$ at $\delta_t$.\end{proof}

\begin{theorem}
	Let $K$ be a compact Hausdorff topological space. Let $t,s\in K$, $x\in C(K)$
	with $\normc{x}=1$. Then the following are equivalent
	\begin{enumerate}
		\item[(a)] The norm of $(C(K),\normcdot)$ is Fr\'echet differentiable to $x$
		with differential $\delta_t-\delta_s$.
		\item[(b)] 
		\begin{enumerate}
			\item[(i)] $\normc{x}=x(t)-x(s)=1$ and $\|x\|_\infty<1$.
			\item[(ii)] If $(t_n)_n$ and $(s_n)_n$ are sequences of points in $K$ such that 
			$\lim_{n\to \infty}x(t_n)-x(s_n)=1$ then $(t_{n})_n$ and $(s_{n})_n$ are eventually
			the constant sequences $t$ and $s$ respectively.
		\end{enumerate}
		\item[(c)]$x$ weak${}^*$ strongly exposes the unit ball of $(C(K),\normcdot)'$ at $\delta_t-\delta_s$.
	\end{enumerate}
\end{theorem}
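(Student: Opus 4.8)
The plan is to follow the four-part scheme used for the single-point Fréchet theorem (Theorem~\ref{Frechet}), with $\delta_t$ replaced throughout by $\delta_t-\delta_s$. By {\v S}mul'yan's theorem \cite{Smulyan1,Smulyan2}, a point $x$ of the unit ball weak${}^*$ strongly exposes $(C(K),\normcdot)'$ at a functional $\varphi$ exactly when the norm of $(C(K),\normcdot)$ is Fréchet differentiable at $x$ with derivative $\varphi$; taking $\varphi=\delta_t-\delta_s$ gives the equivalence of (a) and (c). Since Fréchet differentiability implies Gâteaux differentiability with the same derivative, the Gâteaux differentiability theorem for the derivative $\delta_t-\delta_s$ stated above yields (b)(i) from (a).

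To prove (c)$\Rightarrow$(b)(ii), suppose $x$ weak${}^*$ strongly exposes the ball at $\delta_t-\delta_s$ and let $(t_n)$, $(s_n)$ satisfy $x(t_n)-x(s_n)\to 1$. As the limit is non-zero, $t_n\neq s_n$ for large $n$, so $\mu_n:=\delta_{t_n}-\delta_{s_n}$ is a unit vector of $(\MK,\|\cdot\|_0)$ with $\int_K x\,d\mu_n\to 1=\int_K x\,d(\delta_t-\delta_s)$. Strong exposure forces $\mu_n\to\delta_t-\delta_s$ in $\|\cdot\|_0$. Put $\nu_n=\delta_{t_n}-\delta_{s_n}-\delta_t+\delta_s$; since $\nu_n(K)=0$, formula \eqref{e:mu0} gives $\|\nu_n\|_1=2\|\nu_n\|_0\to 0$. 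Now $\nu_n$ is atomic with atoms among $\{t_n,s_n,t,s\}$ and unit weights, and the only weight cancellations possible are $\delta_{t_n}$ with $-\delta_t$ (when $t_n=t$) and $\delta_s$ with $-\delta_{s_n}$ (when $s_n=s$). Hence if $t_n\neq t$ or $s_n\neq s$ for infinitely many $n$, some atom of $\nu_n$ survives with mass at least $1$, forcing $\|\nu_n\|_1\geq 1$ along a subsequence and contradicting $\|\nu_n\|_1\to 0$. Thus $t_n=t$ and $s_n=s$ eventually, which is (b)(ii).

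For (b)$\Rightarrow$(a), assume (b) but suppose the norm is not Fréchet differentiable at $x$ with derivative $\delta_t-\delta_s$. Then there exist $\varepsilon>0$ and $h_n\to 0$ in $(C(K),\normcdot)$ with
$$
\normc{x+h_n}-\normc{x}-\bigl(h_n(t)-h_n(s)\bigr)>\varepsilon\,\normc{h_n},
$$
the left side being non-negative because $\|\delta_t-\delta_s\|_0=1$ gives $\normc{x+h_n}\geq(\delta_t-\delta_s)(x+h_n)$. Since $\normc{x+h_n}\to 1$ while $\|x\|_\infty<1$ keeps $\|x+h_n\|_\infty$ bounded away from $1$, for large $n$ we have $\normc{x+h_n}=\diam(x+h_n)$, attained at a pair so that $\normc{x+h_n}=(x+h_n)(u_n)-(x+h_n)(v_n)$. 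As $\normc{x+h_n}\to 1$ and $\|h_n\|_\infty\to 0$, this gives $x(u_n)-x(v_n)\to 1$, so (b)(ii), applied to $(u_n)$ and $(v_n)$, yields $u_n=t$ and $v_n=s$ for large $n$. Then $\normc{x+h_n}=1+h_n(t)-h_n(s)$, so the left side of the displayed inequality equals $0$, contradicting $\varepsilon\,\normc{h_n}>0$. Hence (a) holds.

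The main obstacle is the discreteness step in (c)$\Rightarrow$(b)(ii): one must upgrade weak${}^*$-convergence of $\delta_{t_n}-\delta_{s_n}$ to genuine \emph{eventual equality} of the sequences (the Fréchet analogue of the subnet convergence used in the Gâteaux case). This hinges on the identity $\|\nu_n\|_1=2\|\nu_n\|_0$ valid for mean-zero measures, which converts strong exposure into variation-norm convergence, together with the rigidity of atomic measures: such a measure has variation norm at least $1$ as soon as a single unit-weight atom fails to cancel. A secondary point to verify carefully is that, thanks to $\|x\|_\infty<1$, the norm of the perturbed function $x+h_n$ is eventually realised by a diameter pair $\delta_{u_n}-\delta_{v_n}$ rather than a single Dirac mass, which is precisely what licenses the appeal to (b)(ii).
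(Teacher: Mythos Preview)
Your proof is correct and follows exactly the scheme the paper intends: the paper's own proof reads in full ``The proof is similar to Theorem~\ref{Frechet} and therefore omitted,'' and you have carried out precisely that adaptation, replacing $\delta_t$ by $\delta_t-\delta_s$ throughout. Your treatment is in fact more explicit than the paper's in two places: you spell out the Fr\'echet version of (b)$\Rightarrow$(a) with general perturbations $h_n$ (rather than the directional $x+\lambda_n y$ used in the G\^ateaux proof), and in (c)$\Rightarrow$(b)(ii) you make the discreteness step quantitative via the identity $\|\nu_n\|_1=2\|\nu_n\|_0$ for mean-zero measures---the paper's analogue for the single-point case simply invokes $\|\delta_u-\delta_v\|_0=1$ for $u\neq v$, and the natural two-point replacement is exactly your atomic rigidity argument.
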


\begin{proof} The proof is similar to Theorem~\ref{Frechet} and
therefore omitted.\end{proof}

\begin{corollary}
	Let $K$ be a compact Hausdorff topological space and $t,s\in K$ with $t\not=s$. 
	Then $\delta_t-\delta_s$ is a weak${}^*$ strongly exposed point of the unit 
	ball of $(C(K),\normcdot)'$ if and only if $t$ and $s$ are isolated points of 
	$K$.
\end{corollary}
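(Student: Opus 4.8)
The plan is to read this corollary off the preceding theorem, which characterises Fr\'echet differentiability of $\normcdot$ at a point $x$ with derivative $\delta_t-\delta_s$, equivalently (by the {\v S}mul'yan result quoted there) the property that $x$ weak$^*$ strongly exposes the unit ball of $(C(K),\normcdot)'$ at $\delta_t-\delta_s$. That theorem tells us $\delta_t-\delta_s$ is weak$^*$ strongly exposed \emph{if and only if} there exists $x\in C(K)$ satisfying (b)(i) $\normc{x}=x(t)-x(s)=1$ with $\|x\|_\infty<1$, and (b)(ii) that whenever $(t_n)$, $(s_n)$ are sequences in $K$ with $x(t_n)-x(s_n)\to 1$, then $(t_n)$ is eventually equal to $t$ and $(s_n)$ eventually equal to $s$. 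So the whole corollary reduces to showing that such an $x$ exists precisely when both $t$ and $s$ are isolated. The argument runs in exact analogy with the corresponding corollary for the single point $\delta_t$.

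For the forward direction, suppose $t$ and $s$ are isolated. Then $\{t\}$ and $\{s\}$ are clopen, so $x=\tfrac12\,1_{\{t\}}-\tfrac12\,1_{\{s\}}$ lies in $C(K)$. Here $\|x\|_\infty=\tfrac12$ and $\diam(x)=1$, so by the formula $\normc{x}=\max\{\|x\|_\infty,\diam(x)\}$ of Theorem~\ref{p: predual} we get $\normc{x}=1=x(t)-x(s)$ with $\|x\|_\infty<1$, which is (b)(i). For (b)(ii), observe that $x$ takes only the values $-\tfrac12,0,\tfrac12$, so $x(t_n)-x(s_n)$ ranges over the finite set $\{-1,-\tfrac12,0,\tfrac12,1\}$; convergence to $1$ therefore forces $x(t_n)-x(s_n)=1$ for all large $n$, which happens only when $x(t_n)=\tfrac12$ and $x(s_n)=-\tfrac12$, i.e.\ $t_n=t$ and $s_n=s$. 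Thus $x$ witnesses that $\delta_t-\delta_s$ is weak$^*$ strongly exposed.

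For the converse, suppose $\delta_t-\delta_s$ is weak$^*$ strongly exposed by some $x$, so (b)(i) and (b)(ii) hold, and assume toward a contradiction that $t$ is not isolated. Consider the open neighbourhoods $U_n=\{u\in K:x(u)>x(t)-\tfrac1n\}$ of $t$; since $\{t\}$ is not open, we have $U_n\neq\{t\}$, so for each $n$ we may pick $u_n\in U_n$ with $u_n\neq t$. Setting $t_n=u_n$ and $s_n=s$ gives $x(t_n)-x(s_n)>x(t)-\tfrac1n-x(s)=1-\tfrac1n\to 1$, while $(t_n)$ is never equal to $t$, contradicting (b)(ii). Hence $t$ is isolated; the symmetric argument with $V_n=\{u:x(u)<x(s)+\tfrac1n\}$ and $s_n=v_n\in V_n\setminus\{s\}$, $t_n=t$, shows $s$ is isolated as well.

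The only subtle point is the converse, where one is tempted to extract an actual sequence of points converging to the non-isolated point $t$, which would require first countability. The resolution, exactly as in the single-point corollary, is that no convergence is needed: it suffices to produce, for each $n$, a \emph{single} point $u_n\neq t$ at which $x$ exceeds $x(t)-\tfrac1n$, and this is delivered immediately by the failure of $\{t\}$ to be open. This is what keeps the argument valid for an arbitrary compact Hausdorff space $K$, with no countability hypothesis.
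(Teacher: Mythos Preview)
Your proof is correct and follows the same strategy the paper uses for the companion corollary on $\delta_t$: construct an explicit exposing function when the points are isolated, and for the converse produce sequences violating condition~(b)(ii) of the preceding theorem. Your converse is in fact slightly more careful than the paper's version for $\delta_t$, which simply picks a sequence $t_n\to t$ with $t_n\neq t$; your choice of $u_n\in U_n\setminus\{t\}$ achieves $x(t_n)-x(s_n)\to 1$ without ever needing $(t_n)$ itself to converge, so the argument goes through in an arbitrary compact Hausdorff space with no hidden countability assumption.
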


We can rephrase these results in terms of spaces of orthogonally additive 
polynomials as follows.

\begin{proposition}
	Let $K$ be a compact Hausdorff topological space, let $n$ be an even integer and let $s,t$ be distinct points in $K$.
	\begin{itemize}
	\item[(a)]
	$\delta_t^n$ is a weak${}^*$ strongly exposed 
	point of the unit ball of $(\opoly{n}{\CK},\|\cdot\|_\infty)$ 
	if and only if  $t$ is an isolated point of $K$.
	\item[(b)]
	$\delta_t^n-\delta_s^n$ ($s\neq t$) is a weak${}^*$ strongly exposed point of the 
	unit ball of $(\opoly{n}{\CK},\|\cdot\|_\infty)$  
	if and only if $t$ and $s$ are isolated points of $K$. 
\end{itemize}
\end{proposition}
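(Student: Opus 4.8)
The plan is to reduce both statements to the two corollaries already proved, which characterise (in terms of isolated points) the weak${}^*$ strongly exposed points of the unit ball of $(C(K),\normcdot)'$ of the forms $\delta_t$ and $\delta_t-\delta_s$. The bridge is the isometric isomorphism of Theorem~\ref{p:isomorphism}(c): for $n$ even, $J_n$ carries $(\MK,\|\cdot\|_0)$ isometrically onto $(\opoly{n}{\CK},\|\cdot\|_\infty)$, while Theorem~\ref{p: predual} identifies $(\MK,\|\cdot\|_0)$ with $(C(K),\normcdot)'$. Composing these, I would obtain an isometric isomorphism
$$
\Phi\colon (C(K),\normcdot)' \longrightarrow (\opoly{n}{\CK},\|\cdot\|_\infty),
$$
and a one-line computation gives $\Phi(\delta_t)=\delta_t^n$, since $(J_n\delta_t)(x)=\int_K x^n\,d\delta_t=x(t)^n$, and likewise $\Phi(\delta_t-\delta_s)=\delta_t^n-\delta_s^n$.

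The one point requiring care is that $\Phi$ respects the weak${}^*$ structure, so that it carries weak${}^*$ strongly exposed points to weak${}^*$ strongly exposed points. This is built into the definitions: the weak${}^*$ topology on $\opoly{n}{\CK}$ was declared to be the image under $J_n$ of $\sigma(\MK,C(K))=\sigma\bigl((C(K),\normcdot)',(C(K),\normcdot)\bigr)$, so $\opoly{n}{\CK}$ is a dual Banach space with predual $(C(K),\normcdot)$ under the pairing $\langle P,x\rangle=\int_K x\,d(J_n^{-1}P)$, and with respect to this pairing $\langle \Phi\mu,x\rangle=\int_K x\,d\mu$ for every $x$. Hence a predual functional $x$ exposes (respectively, strongly exposes) the unit ball of $(C(K),\normcdot)'$ at $\mu$ if and only if the same $x$ exposes (respectively, strongly exposes) the unit ball of $(\opoly{n}{\CK},\|\cdot\|_\infty)$ at $\Phi\mu$, since the defining conditions involve only the value $\langle\,\cdot\,,x\rangle$ and the norm, both of which are preserved by $\Phi$. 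In particular weak${}^*$ strongly exposed points correspond under $\Phi$.

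With the transfer in hand, part~(a) follows by applying the corollary that $\delta_t$ is a weak${}^*$ strongly exposed point of the unit ball of $(C(K),\normcdot)'$ precisely when $t$ is isolated, together with $\Phi(\delta_t)=\delta_t^n$; and part~(b) follows from the corollary that $\delta_t-\delta_s$ is weak${}^*$ strongly exposed precisely when both $t$ and $s$ are isolated, together with $\Phi(\delta_t-\delta_s)=\delta_t^n-\delta_s^n$. I expect the only genuine work to be the transfer step of the second paragraph; once it is recorded that $\Phi$ is simultaneously an isometry and a weak${}^*$ homeomorphism, the remainder is bookkeeping through identifications already established, and no new topological or measure-theoretic input is needed.
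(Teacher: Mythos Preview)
Your proposal is correct and matches the paper's approach exactly. The paper does not give an explicit proof of this proposition; it merely states that ``we can rephrase these results in terms of spaces of orthogonally additive polynomials,'' having already declared that the weak${}^*$ topology on $\opoly{n}{\CK}$ is by definition the one transferred from $\MK=\CK'$ via $J_n$ --- so your careful verification of the transfer step spells out precisely the identification the paper leaves implicit.
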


In particular, we see that if $K$ has no
isolated points, then the unit ball of
$\opoly{n}{\CK}$ does not contain any
weak$^*$ strongly exposed points.

\bigskip


\centerline{\bf Acknowledgements}
We thank Dirk Werner and Tony Wickstead for helpful discussions.

\bibliographystyle{amsplain}
\bibliography{OAddC(K)}

\providecommand{\bysame}{\leavevmode\hbox to3em{\hrulefill}\thinspace}
\providecommand{\MR}{\relax\ifhmode\unskip\space\fi MR }
\providecommand{\MRhref}[2]{%
  \href{http://www.ams.org/mathscinet-getitem?mr=#1}{#2}
}
\providecommand{\href}[2]{#2}
\begin{thebibliography}{10}

\bibitem{Alaminos}
J.~Alaminos, M.~Bre\v{s}ar, \v{S}. \v{S}penko, and A.~R. Villena,
  \emph{Orthogonally additive polynomials and orthosymmetric maps in {B}anach
  algebras with properties {$\Bbb{A}$} and {$\Bbb{B}$}}, Proc. Edinb. Math.
  Soc. (2) \textbf{59} (2016), no.~3, 559--568. \MR{3572757}

\bibitem{Aliprantis}
Charalambos~D. Aliprantis and Owen Burkinshaw, \emph{Positive operators},
  Springer, Dordrecht, 2006, Reprint of the 1985 original. \MR{2262133}

\bibitem{BLL}
Yoav Benyamini, Silvia Lassalle, and Jos\'e~G. Llavona, \emph{Homogeneous
  orthogonally additive polynomials on {B}anach lattices}, Bull. London Math.
  Soc. \textbf{38} (2006), no.~3, 459--469. \MR{2239041}

\bibitem{Boul03}
Karim Boulabiar, \emph{On products in lattice-ordered algebras}, J. Aust. Math.
  Soc. \textbf{75} (2003), no.~1, 23--40. \MR{1984624}

\bibitem{BuBuskes12}
Qingying Bu and Gerard Buskes, \emph{Polynomials on {B}anach lattices and
  positive tensor products}, J. Math. Anal. Appl. \textbf{388} (2012), no.~2,
  845--862. \MR{2869792}

\bibitem{Cabello}
F\'elix Cabello~S\'anchez, \emph{Diameter preserving linear maps and
  isometries}, Arch. Math. (Basel) \textbf{73} (1999), no.~5, 373--379.
  \MR{1712142}

\bibitem{CLZ1}
Daniel Carando, Silvia Lassalle, and Ignacio Zalduendo, \emph{Orthogonally
  additive polynomials over {$C(K)$} are measures---a short proof}, Integral
  Equations Operator Theory \textbf{56} (2006), no.~4, 597--602. \MR{2284718}

\bibitem{Chacon}
R.~V. Chacon and N.~Friedman, \emph{Additive functionals}, Arch. Rational Mech.
  Anal. \textbf{18} (1965), 230--240. \MR{0172103}

\bibitem{DGZ}
Robert Deville, Gilles Godefroy, and V\'aclav Zizler, \emph{Smoothness and
  renormings in {B}anach spaces}, Pitman Monographs and Surveys in Pure and
  Applied Mathematics, vol.~64, Longman Scientific \& Technical, Harlow;
  copublished in the United States with John Wiley \& Sons, Inc., New York,
  1993. \MR{1211634}

\bibitem{Dineen}
Se\'an Dineen, \emph{Complex analysis on infinite-dimensional spaces}, Springer
  Monographs in Mathematics, Springer-Verlag London, Ltd., London, 1999.
  \MR{1705327}

\bibitem{DS}
Nelson Dunford and Jacob~T. Schwartz, \emph{Linear operators. {P}art {I}},
  Wiley Classics Library, John Wiley \& Sons, Inc., New York, 1988, General
  theory, With the assistance of William G. Bade and Robert G. Bartle, Reprint
  of the 1958 original, A Wiley-Interscience Publication. \MR{1009162}

\bibitem{Friedman1}
N.~Friedman and M.~Katz, \emph{A representation theorem for additive
  functionals}, Arch. Rational Mech. Anal. \textbf{21} (1966), 49--57.
  \MR{0192018}

\bibitem{Friedman2}
N.~A. Friedman and M.~Katz, \emph{On additive functionals}, Proc. Amer. Math.
  Soc. \textbf{21} (1969), 557--561. \MR{0243331}

\bibitem{Hewitt}
Edwin Hewitt and Karl Stromberg, \emph{Real and abstract analysis. {A} modern
  treatment of the theory of functions of a real variable}, Springer-Verlag,
  New York, 1965. \MR{0188387}

\bibitem{KakAL}
Shizuo Kakutani, \emph{Concrete representation of abstract {$(L)$}-spaces and
  the mean ergodic theorem}, Ann. of Math. (2) \textbf{42} (1941), 523--537.
  \MR{0004095}

\bibitem{KakAM}
\bysame, \emph{Concrete representation of abstract {$(M)$}-spaces. ({A}
  characterization of the space of continuous functions.)}, Ann. of Math. (2)
  \textbf{42} (1941), 994--1024. \MR{0005778}

\bibitem{Kusraeva11}
Z.~A. Kusraeva, \emph{On the representation of orthogonally additive
  polynomials}, Sibirsk. Mat. Zh. \textbf{52} (2011), no.~2, 315--325.
  \MR{2841551}

\bibitem{Lacey}
H.~Elton Lacey, \emph{The isometric theory of classical {B}anach spaces},
  Springer-Verlag, New York-Heidelberg, 1974, Die Grundlehren der
  mathematischen Wissenschaften, Band 208. \MR{0493279}

\bibitem{Loane}
J.~Loane, \emph{Polynomials on vector lattices}, Ph.D. thesis, National
  University of Ireland Galway, 2007.

\bibitem{Meyer-Nieberg}
Peter Meyer-Nieberg, \emph{Banach lattices}, Universitext, Springer-Verlag,
  Berlin, 1991. \MR{1128093}

\bibitem{Mizel}
Victor~J. Mizel, \emph{Characterization of non-linear transformations
  possessing kernels}, Canad. J. Math. \textbf{22} (1970), 449--471.
  \MR{0262890}

\bibitem{Palazuelos}
C.~Palazuelos, A.~M. Peralta, and I.~Villanueva, \emph{Orthogonally additive
  polynomials on {$C^\ast$}-algebras}, Q. J. Math. \textbf{59} (2008), no.~3,
  363--374. \MR{2444066}

\bibitem{PV}
David P\'erez-Garc\'\i{}a and Ignacio Villanueva, \emph{Orthogonally additive
  polynomials on spaces of continuous functions}, J. Math. Anal. Appl.
  \textbf{306} (2005), no.~1, 97--105. \MR{2132891}

\bibitem{Rao}
M.~M. Rao, \emph{Local functionals}, Measure theory, {O}berwolfach 1979
  ({P}roc. {C}onf., {O}berwolfach, 1979), Lecture Notes in Math., vol. 794,
  Springer, Berlin, 1980, pp.~484--496. \MR{577993}

\bibitem{Rudin}
Walter Rudin, \emph{Real and complex analysis}, third ed., McGraw-Hill Book
  Co., New York, 1987. \MR{924157}

\bibitem{Sund}
K.~Sundaresan, \emph{Geometry of spaces of homogeneous polynomials on {B}anach
  lattices}, Applied geometry and discrete mathematics, DIMACS Ser. Discrete
  Math. Theoret. Comput. Sci., vol.~4, Amer. Math. Soc., Providence, RI, 1991,
  pp.~571--586. \MR{1116377}

\bibitem{ToumiReg}
Mohamed~Ali Toumi, \emph{A decomposition of orthogonally additive polynomials
  on {A}rchimedean vector lattices}, Bull. Belg. Math. Soc. Simon Stevin
  \textbf{20} (2013), no.~4, 621--638. \MR{3129063}

\bibitem{Smulyan1}
V.~{{\v S}mul'yan}, \emph{{On some geometrical properties of the unit sph\`ere
  in the space of the type (B).}}, {Rec. Math. Moscou, n. Ser.} \textbf{6}
  (1939), 77--94 (Russian).

\bibitem{Smulyan2}
\bysame, \emph{{Sur la derivabilite de la norme dans l'espace de Banach.}}, {C.
  R. (Dokl.) Acad. Sci. URSS, n. Ser.} \textbf{27} (1940), 643--648 (French).

\bibitem{Villena}
A.~R. Villena, \emph{Orthogonally additive polynomials on {B}anach function
  algebras}, J. Math. Anal. Appl. \textbf{448} (2017), no.~1, 447--472.
  \MR{3579894}

\end{thebibliography}

\noindent Christopher Boyd, School of Mathematics \& Statistics, University
College Dublin, \hfil\break
Belfield, Dublin 4, Ireland.\\
e-mail: Christopher.Boyd@ucd.ie

\medskip

\noindent Raymond A. Ryan, School of Mathematics, Statistics and Applied 
Mathematics, National University of Ireland Galway, Ireland.\\
e-mail: ray.ryan@nuigalway.ie

\medskip

\noindent Nina Snigireva, School of Mathematics, Statistics and Applied 
Mathematics, National University of Ireland Galway, Ireland.\\
e-mail: nina.snigireva@nuigalway.ie

\end{document}